\tikzset{point/.style={circle,inner sep=0pt,minimum size=3pt,fill=black}}
\definecolor{darkred}{rgb}{0.9,0,0.3}
\definecolor{darkblue}{rgb}{0,0.3,0.9}
\definecolor{test}{rgb}{1,0,1}
\newcommand{\black}{\normalcolor}
\numberwithin{equation}{section}
\theoremstyle{plain} 
\newtheorem{thm}{Theorem}[section]
\newtheorem{lem}[thm]{Lemma}
\newtheorem{pro}[thm]{Proposition}
\newtheorem{defn}[thm]{Definition}
\newtheorem{con}[thm]{Conjecture}
\theoremstyle{remark}
\newtheorem{rem}[thm]{Remark}
\renewcommand{\Im}{\mathrm{Im}\,}
\newcommand{\im}{\mathrm{Im}\,}
\newcommand{\E}{{\mathbb E }}
\newcommand{\R}{{\mathbb R }}
\newcommand{\N}{{\mathbb N}}
\newcommand{\C}{{\mathbb C}}
\newcommand{\ii}{\mathrm{i}}
\newcommand{\deq}{\mathrel{\mathop:}=}
\newcommand{\eqd}{\mathrel={\mathop:}}
\newcommand{\e}[1]{\mathrm{e}^{#1}}
\newcommand{\dd}{\mathrm{d}}
\newcommand{\bs}{\boldsymbol}
\newcommand{\la}{\langle}
\newcommand{\ra}{\rangle}
\newcommand{\qq}[1]{[\![{#1}]\!]}
\renewcommand{\mathbf}[1]{\bs{#1}}
\renewcommand{\leq}{\leqslant}
\renewcommand{\le}{\leqslant}
\renewcommand{\geq}{\geqslant}
\renewcommand{\epsilon}{\varepsilon}
\renewcommand{\b}[1]{\boldsymbol{\mathrm{#1}}} 
\newcommand{\bb}{\mathbb} 
\newcommand{\cal}{\mathcal}
\newcommand{\ul}[1]{\underline{#1} \!\,} 
\DeclareMathOperator{\cov}{Cov}
\DeclareMathOperator{\tr}{Tr}
\DeclareMathOperator{\var}{Var}
\DeclareMathOperator{\dist}{dist}
\newcommand{\floor}[1] {\lfloor {#1} \rfloor}
\begin{document}

\begin{center}
\large\bf
On Cram\'{e}r-von Mises statistic for the spectral distribution of random matrices
\end{center}

\vspace{2ex}
\renewcommand{\thefootnote}{\fnsymbol{footnote}}
\begin{center}
 \begin{minipage}[t]{0.35\textwidth}
\begin{center}
Zhigang Bao\footnotemark[1]  \\
\footnotesize {HKUST}\\
{\it mazgbao@ust.hk}
\end{center}
\end{minipage}
\hspace{8ex}
\begin{minipage}[t]{0.35\textwidth}
\begin{center}
Yukun He\footnotemark[2]  \\ 
\footnotesize {University of Z\"{u}rich}\\
{\it yukun.he@math.uzh.ch}
\end{center}
\end{minipage}

\footnotetext[1]{Supported by Hong Kong RGC grant ECS 26301517, GRF 16300618, and NSFC 11871425.}
\footnotetext[2]{Supported by NCCR Swissmap, SNF grant No.\ 20020\_1726.}

\renewcommand{\thefootnote}{\fnsymbol{footnote}}	

\end{center}

\vspace{0.5cm}
\begin{center}
 \begin{minipage}{0.8\textwidth}\footnotesize{
Let $F_N$ and $F$ be the empirical and limiting spectral distributions of an $N\times N$ Wigner matrix. The Cram\'{e}r-von Mises (CvM) statistic  is a classical goodness-of-fit statistic that characterizes the distance between $F_N$ and $F$ in $L^2$-norm. In this paper, we consider a mesoscopic approximation  of the
CvM statistic for Wigner matrices, and derive its limiting distribution. In the appendix, we also give the limiting distribution of the  CvM statistic (without approximation) for the toy model CUE.} 
\end{minipage}
\end{center}

\thispagestyle{headings}

\section{Introduction and main result}
\subsection{Background} \label{s.background}
Let $H=(H_{ij})_{N,N}$ be a Wigner matrix, i.e., $H_{ij}=H_{ji}^*$ and $H_{ij}$'s are independent (up to symmetry).  Further,  we assume 
\begin{enumerate}
	\item $\mathbb{E}H_{ij}=0$ for all $i,j$.
	\item $\mathbb{E} |H_{ij}|^2=\frac{1}{4N}$ for $i \ne j$, and $\bb E|H_{ii}|^2=\frac{\sigma^2}{4N}$ for all $i$.
	\item $\bb E|H_{ij}|^a=O(N^{-a/2})$ for all $i,j$ and all fixed positive integer $a$.
\end{enumerate}
We distinguish the \emph{real symmetric} case ($\beta=1$) where $H_{ij} \in \R$ for all $i,j$ from the \emph{complex Hermitian} case ($\beta=2$) where $\E H_{ij}^2 = 0$ for $i \neq j$. We further assume the fourth moments of $H_{ij}$ are homogeneous, i.e. $\bb E|H_{ij}|^4=m_4/N^2$ for all $i \ne j$, and denote $c_4=m_4-(4-\beta)/16$.  Let
\begin{align*}
\lambda_1\geq \lambda_2\geq \ldots \geq \lambda_N
\end{align*}
be the ordered eigenvalues of $H$.  Denote the empirical spectral distribution of $H$ by 
\begin{align*}
F_N(x):=\frac{1}{N}\sum_{i=1}^N \mathbf{1}(\lambda_i\leq x). 
\end{align*}
Arguably, the most fundamental result in Random matrix Theory (RMT) is Wigner's semicircle law \cite{Wigner}. It states that  almost surely $F_N(x)$ converges weakly to the semicircle law $F(x)$ with the density function given by 
\begin{align*}
\rho_{sc}(x)=\frac{2}{\pi} \sqrt{(1-x^2)_+}. 
\end{align*}
Based on this fundamental weak convergence result, some  natural questions can be asked further. For instance, one can ask   how to characterize the distance between $F_N$ and $F$. In addition, if a specific distance is chosen, we can take one more step to ask:   What is the limiting behaviour  of this distance between $F_N$ and $F$? The first question has a rich answer, considering that there are many widely used statistical distances for distributions in the literature. However, it could be very challenging to answer the second question for some specific distances, especially when one aims at some fine result like the limiting distributions of these distances between $F_N$ and $F$. 

In applied probability and statistics theory, two widely used  distances (or statistics) between the empirical distribution and the limiting one, are the  Cram\'{e}r-von Mises (CvM) statistic and the Kolmogorov-Smirnov (KS) statistic, which are $L^2$ and $L^\infty$ distances, respectively. More specifically, the CvM statistic for the spectral distribution of  $H$ is defined as 
\begin{align} 
\mathcal{A}_N:= \int (F_N(t)-F(t))^2 {\rm d} F(t),  \label{190529100}
\end{align}
and the   well-known KS statistic is defined by 
\begin{align*}
\mathcal{K}_N:= \max_{t}|F_N(t)-F(t)|.  
\end{align*}
We remark here that most of the literature on these two statistics are about the empirical distribution of the i.i.d. samples of a  random variable. Here we consider the same distances for the empirical distribution of the highly correlated eigenvalues of random matrices. In the statistics literature, both $\mathcal{A}_N$ and $\mathcal{K}_N$ belong to the category of  the {\it goodness-of-fit} statistics, which can be used to test how well the empirical distribution fits a given distribution. 
Specifically, both two statistics are fundamental for the following hypothesis testing problem: test if or not a random variable $X$ follows a given distribution ($F$), based on the empirical distribution ($F_N$) of its i.i.d. samples; see \cite{AD} for instance. Here we use $F_N$ and $F$ to denote the empirical and limiting distribution for the i.i.d. samples as well, with certain abuse of notations. For the i.i.d. setting, identifying the  limiting distributions of $\mathcal{A}_N$ and $\mathcal{K}_N$ and their variants had been the primary task for this topic, due to their  importance for the hypothesis testing problems. Nowadays, in the classical i.i.d. setting, under the null hypothesis,  it is well-known that the limiting distributions of both $\mathcal{A}_N$ and $\mathcal{K}_N$  follow from the classical Donsker's theorem. Specifically,  since $\sqrt{N}(F_N(t)-F(t))$ converges to the Brownian bridge, $\mathcal{A}_N$ and $\mathcal{K}_N$ converge to the corresponding functionals of Brownian bridge, after appropriate normalization. In the classical i.i.d. case,  the weight ${\rm d} F(t)$  in (\ref{190529100}) is chosen to make the statistic {\it nonparametric}.  That means, after a simple change of variable in (\ref{190529100}), no matter the distribution of the i.i.d. sample, it is always the same as the case of i.i.d. uniformly distributed samples. But in the literature, there are indeed many other choices of the weights. We refer to the reference \cite{AD, Stephens76}. In general, for the i.i.d. case, the distribution of the CvM type statistics with general weight is given by an infinite  sum of independent weighted $\chi^2$ random variables. The characteristic function of this  distribution can be written as a Fredholm determinant; see \cite{AD, Stephens76} for instance. 

To the best of our knowledge, in the context of RMT, the limiting distribution of $\mathcal{A}_N$ and $\mathcal{K}_N$ for the empirical spectral distribution have never been obtained. Nevertheless, many recent results in RMT are related to this topic in one way or another. For Wigner matrices, as a consequence of the rigidity of the eigenvalues, a large deviation result for $\mathcal{K}_N$ has been established in  \cite{EYY}. It states that $\mathcal{K}_N$ is bounded by $(\log N)^{O(\log\log N)}/N$ with high probability. We also refer to \cite{GT, EKYY, EKYY1, AEK} for some related developments on $\mathcal{K}_N$. More precise bound of $\mathcal{K}_N$ is available for unitarily  invariant ensembles. For instance, in the recent work \cite{CFLW}, it is proved that the bound $\log N/N$ holds for $\mathcal{K}_N$ with high probability, for a class of unitarily invariant ensembles. However, to identify the limiting distribution of $\mathcal{K}_N$ precisely is still far beyond the current studies. It is worth mentioning that $\mathcal{K}_N$ can also be written as
\begin{align*}
\mathcal{K}_N=\max_t \Big|\frac{1}{N\pi}\Im \log\det\big(H-(t+\ii0^+)I_N\big)-F(t)\Big|, 
\end{align*}
with the convention $\Im \log (x+\ii \infty)=\frac{\pi}{2}$. In contrast to the maximum of the random field for the imaginary part of log characteristic polynomial stated above, more research has been devoted to  understanding  the maximum of the random field for the real part of log characteristic polynomial of random matrices \cite{ABB, CMN, FK, PZ, FHK, FS, LP, CZ, La}. But none of these works are on generally distributed Wigner matrices, and also no rigorous result on the limiting distribution (fluctuation) is obtained, although conjectures have been made in  \cite{FHK,FK,FS} for CUE and GUE.  The related study for $\mathcal{A}_N$ is even less. 
In \cite{Rains1997} (see (1.1) therein), Rains discussed a quadratic goodness-of-fit statistic for CUE, which is constructed in the same spirit as $\mathcal{A}_N$. But only the expectation of Rains' statistic was derived in \cite{Rains1997}. In  Appendix \ref{App:A}, we will show that CUE is actually a toy model for the CvM statistic. We can derive the limiting distribution of a CvM statistic explicitly in this case. 

From the application perspective, unlike
the i.i.d. case where the  distribution of a random variable is the central object for
the hypothesis testing, we can use the goodness-of-fit statistics of the spectral distribution to
test the goodness-of-fit for  general features of the matrix ensembles rather than the
limiting spectral distribution itself. For instance, for the sample covariance matrix, one often
uses spectral statistics to test the structure of the population covariance matrix; for a random
graph, one can use spectral statistics of the adjacency matrix to test the graph parameters. Although the limiting distribution of the CvM statistic for Wigner type or covariance type matrices is not available so far, it has already been used in some statistics works, such as \cite{PG, WP}, where the applications are based on  numerical simulation of the CvM statistic.

For Wigner matrices, it is known that  $F_N(t)$ behaves asymptotically as a log-correlated Gaussian field  in the finite-dimensional sense, on both macroscopic and mesoscopic scale;  see \cite{Johansson98, BY,  LS18, Gu05, OR10, BM18} for instance.  However, unlike the i.i.d. case,  this log-correlated Gaussian field asymptotic for random matrix  is not precise enough to tell the fluctuation of $\mathcal{A}_N$ and $\mathcal{K}_N$. It can be viewed as a common difficulty for $\mathcal{A}_N$ and $\mathcal{K}_N$, whose distributions both rely on  a rather delicate understanding of the limiting behaviour of the  field $F_N(t), t\in \mathbb{R}$. 
 On the other hand, an elementary calculation leads to the alternative representation
\begin{align*}
\mathcal{A}_N= \frac{1}{N}\sum_{i=1}^N \Big(F(\lambda_i)-\frac{N-i+\frac12}{N}\Big)^2+\frac{1}{12 N^2}\,,
\end{align*}
which together with the spectral rigidity in \cite{EYY} allows us to write  
\begin{align}
\mathcal{A}_N=\frac{1}{N}\sum_{i=1}^N \rho_{sc}(\mu_i)^2(\lambda_i-\mu_i)^2+\frac{1}{12 N^2}+O(N^{-3+\varepsilon}) \label{19092801}
\end{align}
with high probability, where $\rho_{sc}$ represents the density function of $F$ and $\mu_i$'s represent the quantiles of $F$, i.e., $F(\mu_i)=(N-i+\frac12)/N$. Hence, one can also regard $\mathcal{A}_N$ as certain quadratic eigenvalue statistic.

In this paper, we take a first step to understand the limiting distribution of the CvM type statistics for Wigner matrices. Instead of $\mathcal{A}_N$, we turn to study a more accessible mesoscopic approximation of $\mathcal{A}_N$, see  Definition \ref{defn.19083001}. Our aim is to derive the limiting distribution for this approximated $\mathcal{A}_N$. Numerical study strongly suggests that the fluctuation of this approximation shall be the same as the fluctuation of the original $\mathcal{A}_N$. The mesoscopic approximation can be regarded as a regularization of $F_N(t)$ on mesoscopic scale, which resembles the classical idea in RMT of using the imaginary part of Stieltjes transform on mesoscopic scale to regularize the eigenvalue density. Similar ideas of regularizing the entire  field $F_N(t)$ can also be found in \cite{LOS18, FKS}, for instance.

To study the mesoscopic approximation of $\cal A_N$, we first use the Fourier-Chebyshev expansion to factorize it into linear statistics of Chebychev polynomials. The main step of the proof lies at analyzing the covariance of squares of linear statistics of Chebychev polynomials with $N$-dependent degrees (see Proposition \ref{pro.cov} below). To this end, we explicitly compute the four-point correlation functions for mesoscopic linear statistics of Green functions, by the cumulant expansion formula. We present a simple argument that analyzes the four-point correlation functions both in the bulk and near the edge of the spectrum; the result is precise in the sense that it reveals the cancellations coming from the orthogonality of Chebychev polynomials, for all degrees $k\ll N^{1/3}$.  We remark here that  the fact that the Chebyshev polynomials form an orthogonal basis for the covariance structure of the linear eigenvalue statistics for random Hermitian matrix was first noticed in \cite{Johansson98}, for general $\beta$ ensembles. It was recently rigorously shown in \cite{FKS} that the field of log characteristic polynomial of GUE converges to log correlated Gaussian fields, on macroscopic and mesoscopic scales. Especially, on macroscopic scale,  the limiting log correlated Gaussian field  is given by a random Fourier-Chebyshev series, and the convergence is  understood in the sense of distributions in a suitable Sobolev space. Fourier-Chebyshev expansion of the test function plays a significant role in \cite{FKS} as well.

 Finally, we remark here that except for the classical distances $\mathcal{A}_N$ and $\mathcal{K}_N$ often used in applied probability and statistics, some other  distances  between  $F_N$ and $F$  such as Wasserstein distance has also been considered in the random matrix literature; see \cite{MM13, MM} for instance. Especially, very precise large deviation results are obtained in these works for various random matrix models such as Wigner matrices, Wishart matrices, Haar-distributed matrices from the compact classical groups. We believe that further study on  the explicit limiting distributions for all these statistical distances is appealing from both theoretical and applied point of view.

\subsection{Mesoscopic approximation of CvM statistics and main result}
In this paper, we will turn to study a mesoscopic approximation of the CvM statistic for Wigner matrix. 
The construction of our mesoscopic statistic in Definition \ref{defn.19083001} is inspired by a CvM statistic for the Circular Unitary Ensemble (CUE), whose limiting distribution is established in Appendix  \ref{App:A} in details. Here we provide a brief outline of the derivation for this limiting law for CUE and explains how it inspires the construction in Definition \ref{defn.19083001} for Wigner matrix. 

Let $U$ be a $N$-dimensional CUE, i.e., a Haar distributed random unitary matrix on unitary group $\mathcal{U}(N)$,  and denote its eigenvalues by $\mathrm{e}^{\mathrm{i}\theta_i}, 1\leq i\leq N$. Here $\theta_i$'s are unordered eigenphases. With certain abuse of notation, we still denote by  $F_N(x)=\sharp\{i:  0\leq \theta_i\leq x\}/N, x\in [0,2\pi]$ the empirical distribution of eigenvalues of CUE. The CvM statistic for CUE is defined as 
\begin{align*}
\mathcal{A}_N^{\text{CUE}}:=\int_{0}^{2\pi}\int_{0}^{2\pi} \Big(\big(F_N(y)-F_N(x)\big)-\frac{y-x}{2\pi }\Big)^2 {\rm d} x{\rm d} y
\end{align*}
instead; we refer to Appendix \ref{App:A} for an explanation on the double integral construction. The basic idea of the derivation for the limiting distribution of $\mathcal{A}_N^{\text{CUE}}$ is from \cite{DE01}. 
Using the Fourier transform of the indicator function, one can 
write  for any $x\leq y$, 
\begin{align*}
F_N(y)-F_N(x) - \frac{y-x}{2\pi }\mapsto \frac{1}{2N\pi \mathrm{i}}\sum_{j=1}^\infty \frac{\mathrm{e}^{-\mathrm{i} jx}-\mathrm{e}^{-\mathrm{i} jy}}{j} \text{Tr}U^j
+\frac{1}{2N\pi \mathrm{i}}\sum_{j=1}^\infty \frac{\mathrm{e}^{\mathrm{i} jy}-\mathrm{e}^{\mathrm{i} jx}}{j} \text{Tr}\overline{U}^j.
\end{align*}
Then simple orthonormality leads to
\begin{align}
\mathcal{A}_N^{\text{CUE}}
=
&\frac{4}{N^2}\sum_{j=1}^\infty \frac{1}{j^2}\text{Tr}U^j \text{Tr}\overline{U}^j. \label{19090401}
\end{align}
It is known from \cite{DE01} that the collection $\{\frac{1}{\sqrt{j}} \text{Tr} U^j\}_{j=1}^{N^{1-\varepsilon}}$ behaves like independent complex Gaussian variables, in the sense of moments.  In addition, the expectations and covariances  of   $\text{Tr}U^j \text{Tr}\overline{U}^j$'s can be computed for any $j$ even if $j$ is larger than $N$, say. Such a calculation is elementary by using the explicit kernel for the determinantal point process of the eigenvalues of CUE (c.f. (\ref{20070710})). The result shows that the fluctuations of the large $j$ terms in the sum (\ref{19090401}) are negligible. Hence, the fluctuation of $\mathcal{A}_N^{\text{CUE}}$ is actually governed by the asymptotic Gaussianity of $\{\frac{1}{\sqrt{j}} \text{Tr} U^j\}_{j=1}^{N^{1-\varepsilon}}$ for relatively small $j$'s.  After appropriate  normalization, one can get the limiting law for $\mathcal{A}_N^{\text{CUE}}$ in   (\ref{20070301}).

The above strategy is based on the fact that the basis $\{\text{Tr} U^j\}_{j=1}^\infty$ decomposes the randomness of the linear eigenvalue statistics of CUE, and the information on the joint distribution of $\{\text{Tr} U^j\}_{j=1}^\infty$ can be precisely extracted from  Theorem 2.1 of \cite{DE01} or the explicit jpdf of the eigenvalues of CUE (c.f. (\ref{jpdf of CUE})). It is well known that for Winger matrices, the set of statistics $\{\text{Tr} T_j(H)\}_{j=1}^{\infty}$  serves as an analogue of  $\{\text{Tr} U^j\}_{j=1}^\infty$ for CUE; see \cite{Johansson98} for instance. Here $T_j$'s defined in (\ref{20070310}) are the  Chebyshev polynomials of the first kind. Hence, using the basis $\{\text{Tr} T_j(H)\}_{j=1}^{\infty}$, one can get a similar expansion as (\ref{19090401}) for the CvM statistic of Wigner matrices. Unfortunately, the limiting distribution of $\text{Tr} T_j(H)$'s is only known for fixed $j$; see \cite{Johansson98} and \cite{BY}, and the limiting behaviour of $\text{Tr} T_j(H)$'s with large $j$ is very hard to get due to the lack of  explicit jpdf of eigenvalues for general Wigner matrices. Although our analysis allows us to extend the limiting behaviour of $\text{Tr} T_j(H)$'s from fixed $j$ to some moderately growing $j$, say $j\lesssim N^{\frac13-\varepsilon}$, the analysis does not allows us to establish a limiting result for arbitrarily large $j$. Hence, our mesoscopic statistic defined in  Definition \ref{defn.19083001} is constructed to suppress the contribution of the large $j$ terms of $\text{Tr} T_j(H)$ in the expansion of the CvM statistics. We use the classical idea in Fourier theory to use the convolution with a Poisson kernel to suppress the large $j$ terms. The construction is detailed in the sequel. 

Our statistic is constructed via a Poisson convolution. Specifically, we set the Poisson type kernel on $(-1,1)$ 
\begin{align*}
P^{\pm}_\omega(x,y)=\frac{1-r_\omega^2}{1-2r_\omega(xy\pm\sqrt{(1-x^2)(1-y^2)})+r_\omega^2}, \qquad   r_\omega=1-\omega. 
\end{align*} 
 Then we define the transformation for any real function $f(y)$ which is square integrable on $(-1,1)$ w.r.t. the weight function $1/\sqrt{1-y^2}$, 
\begin{align}
f_\omega(x)= \frac{1}{2\pi}\int_{-1}^1 \big(P^+_\omega(x,y)+P^-_\omega(x,y)\big) \frac{f(y)}{\sqrt{1-y^2}} {\rm d}y.  \label{19082801}
\end{align}
Observe that $f_{\omega}(\cos\theta)$ is the Poisson transform of $f(\cos\theta)$ on $(-\pi,\pi)$. 
Hence,  $f_\omega(x)$ is an approximation of $f(x)$ on scale $\omega$. Especially, for any  $x\in (-1,1)$ which is a continuity point of $f$, we have $f_\omega(x)\to f(x)$ if $\omega\downarrow 0$.  In addition, observe that the domain of the integral in the definition (\ref{190529100}) is $t\in (-1,1)$. For any $t\in (-1,1)$. We can also write 
\begin{align*}
F_N(t)=\frac{1}{N}\sum_{i=1}^N\mathbf{1}(\bar{\lambda}_i\leq t),  
\end{align*} 
where $\bar{x}=-1\vee x\wedge 1$ for any $x\in \mathbb{R}$.  In order to raise our mesoscopic approximation of CvM statistic, we approximate the indicator function $\mathbf{1}(\cdot\leq t)$ by a smooth approximation in the sense of (\ref{19082801}). More specifically, we denote by 
\begin{align}
\chi_\omega^t(x)=\frac{1}{2\pi} \int_{-1}^t \big(P^+_\omega(x,y)+P^-_\omega(x,y)\big) \frac{1}{\sqrt{1-y^2}} {\rm d}y, \quad x\in (-1,1),\quad  t\in (-1,1). \label{190830105}
\end{align}
Then we define the approximation of $F_N(t)$ and $F(t)$ on scale $\omega$ as the following 
\begin{align}
F_{N,\omega}(t):=\frac{1}{N}\sum_{i=1}^N \chi_\omega^t(\bar{\lambda}_i), \qquad F_\omega(t):= \int_{-1}^1 \chi_\omega^t(x){\rm d} F(x).  \label{190830110}
\end{align}
With these approximations, we are ready to define our mesoscopic approximation of $\mathcal{A}_N$.

\begin{defn}[{\it MCvM} statistic] \label{defn.19083001} We call the statistic 
\begin{align*}
\mathcal{A}_{N,\omega}:= \int \big(F_{N,\omega}(t)-F_\omega(t)\big)^2 {\rm d} F(t)
\end{align*}
the mesoscopic approximation of the {\rm{CvM}} statistic  on scale $\omega$, which will be abbreviated as {\it MCvM} statistic (on scale $\omega$). 
\end{defn}

In this paper, we will investigate  the limiting distribution of $\mathcal{A}_{N,\omega}$ when $\omega\equiv \omega_N=N^{-\alpha}$ with some constant $\alpha>0$. Let $(Z_i)$ be a sequence of independent standard real Gaussian random variables. Our main theorem is as following. 
\begin{thm} \label{thm. main result} Let $H$ be a Wigner matrix, and $\cal A_{N,\omega}$ be as in Definition \ref{defn.19083001}. Let $\omega=N^{-\alpha}$. For any fixed $\alpha\in (0, \frac13)$, we have 
\begin{multline} \label{19110601}
N^2\mathcal{A}_{N,\omega}-\frac{\alpha \log N}{\beta\pi^2}-b_{\beta}\overset{d}{\longrightarrow}  \frac{1}{\beta\pi^2}\sum_{k=1}^\infty\bigg( \frac{1}{ k}(Z_k^2-1)-\frac{1}{\sqrt{k(k+2)}}Z_kZ_{k+2}\bigg) \\
+\frac{2-\beta}{\pi^2}\sum_{k=1}^\infty\Big(\frac{k+2}{4k^{3/2}(k+1)}Z_{2k}-\frac{1}{4k\sqrt{k+1}}Z_{2k+2}\Big)+a_\beta\,, 
\end{multline}
where  
\begin{align*}
&a_\beta\deq 
\frac{1}{8\pi^2}\big((4\sigma^2-16c_4-6+\beta)(\beta^{-1}+8c_4)^{1/2}+3(\beta-2)\big)Z_2+\frac{2\sqrt{2}}{\sqrt{\beta}\pi^2}\Big(c_4 -\frac{\sigma^2+\beta-3}{16}\Big)Z_4\\
&-\frac{2}{\sqrt{3\beta}\pi^2}c_4 Z_6+\frac{1}{ \pi^2}\Big(\frac{3(\sigma^2+\beta-3)}{4}+\frac{1}{2\beta}\Big)(Z_1^2-1)-\frac{1}{\sqrt{3}\pi^2}\Big(\frac{\sigma}{\sqrt{2\beta}}-\frac{1}{\beta}\Big)Z_1Z_3\\
&+\frac{4}{\pi^2}c_4(Z_2^2-1)-\frac{1}{2\sqrt{2}\beta\pi^2}\big((1+8c_4\beta)^{1/2}-1\big)Z_2Z_4\,,
\end{align*}
and
\[
b_\beta\deq-\frac{\log 2-1/2}{\beta\pi^2}+(2-\beta)\Big(\frac{1}{48}-\frac{1}{8\pi^2}\Big)+\frac{1}{16\pi^2}(\sigma^2+\beta-3)(2\sigma^2-\beta+12)+\frac{19-2\beta-3\sigma^2}{3\pi^2}c_4
+\frac{8}{\pi^2}c_4^2\,.
\]
\end{thm}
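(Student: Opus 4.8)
The plan is to rewrite $\mathcal A_{N,\omega}$ as an explicit quadratic form in the Chebyshev linear statistics $\xi_k:=\tr T_k(H)-\E\,\tr T_k(H)$, $k\ge1$, and to read off the limiting law from the limiting covariance structure of the $\xi_k$. First I would perform the Fourier--Chebyshev expansion. Writing $t=\cos\tau$ with $\tau\in(0,\pi)$ and using the Poisson-kernel identity $P^{\pm}_\omega(\cos\theta,\cos\phi)=1+2\sum_{k\ge1}r_\omega^k\cos k(\theta\mp\phi)$ in \eqref{190830105}, one finds
\[
\chi_\omega^t(x)=\frac{\pi-\tau}{\pi}-\frac{2}{\pi}\sum_{k\ge1}\frac{r_\omega^{k}\sin k\tau}{k}\,T_k(x),
\]
so that $F_{N,\omega}(t)-F_\omega(t)=-\tfrac{2}{\pi N}\sum_{k\ge1}\tfrac{r_\omega^{k}\sin k\tau}{k}\eta_k$ with $\eta_k:=\tr T_k(\bar H)-N\!\int T_k\,\dd F$. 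Substituting this into Definition~\ref{defn.19083001}, using $\dd F(t)=\tfrac2\pi\sin^2\tau\,\dd\tau$, and evaluating the elementary integral $\int_0^\pi\sin k\tau\sin l\tau\sin^2\tau\,\dd\tau=\tfrac\pi4\delta_{kl}-\tfrac\pi8\mathbf 1(|k-l|=2)+\tfrac\pi8\mathbf 1(k=l=1)$ gives the exact identity
\[
N^2\mathcal A_{N,\omega}=\frac{2}{\pi^2}\sum_{k\ge1}\frac{r_\omega^{2k}}{k^2}\eta_k^2+\frac{1}{\pi^2}r_\omega^{2}\eta_1^2-\frac{2}{\pi^2}\sum_{k\ge1}\frac{r_\omega^{2k+2}}{k(k+2)}\eta_k\eta_{k+2}.
\]

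Next I would reduce to the bulk statistics. By the rigidity estimates of \cite{EYY}, with high probability only $O(1)$ eigenvalues lie outside $(-1,1)$, at distance $O(N^{-2/3+\epsilon})$ from $\pm1$; hence $\eta_k=\xi_k+m_k+O(N^{-1/2+\epsilon}k)$, where $m_k:=\E\,\tr T_k(H)-N\!\int T_k\,\dd F=O(1)$. Moreover, setting $k_*:=N^{\alpha}(\log N)^2$, the factor $r_\omega^{2k}=(1-N^{-\alpha})^{2k}$ is super-polynomially small for $k>k_*$, while $|\eta_k|\le 2N$ deterministically (as $\bar\lambda_i\in[-1,1]$); so the part of the last display coming from $k>k_*$, as well as all the $O(N^{-1/2+\epsilon}k)$ errors, is $o_{\P}(1)$. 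Since $\alpha<\tfrac13$ we have $k_*\ll N^{1/3}$, so it remains to control the joint behaviour of $(\xi_k)_{1\le k\le k_*}$ --- precisely the regime in which $\tr T_k(H)$ is asymptotically Gaussian with an almost diagonal covariance.

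The heart of the argument is Proposition~\ref{pro.cov}: a computation, uniform in $j,k\le N^{1/3-\epsilon}$, of the second and fourth joint moments of the $\xi_k$, equivalently of the two- and four-point correlation functions of $\tr T_k(H)$. I would express $\tr T_k(H)$ through the Green function $G(z)=(H-z)^{-1}$ at the natural mesoscopic scale $\Im z\sim 1/k$ and expand the expectations via the cumulant expansion formula in the entries of $H$. Because the $T_k$ oscillate on scale $1/k$, the delicate point is that the leading contributions must collapse, through the orthogonality of the Chebyshev polynomials, to $\cov(\xi_j,\xi_k)\to\tfrac{k}{2\beta}\delta_{jk}$ and $\cov(\xi_j^2,\xi_k^2)\to 2(\tfrac{k}{2\beta})^2\delta_{jk}$ up to corrections summable in $k$, together with $O(1)$ corrections supported on the low degrees $k\le 6$, through which $\sigma^2$ and $c_4$ enter $\var(\xi_k)$, $\cov(\xi_k,\xi_{k+2})$ and $m_k$. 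The main obstacle, I expect, is exactly this: controlling the error terms of the cumulant expansion uniformly for $k$ up to $N^{1/3}$, in the bulk and near the spectral edge simultaneously, with enough precision to exhibit the orthogonality cancellations; the rest is bookkeeping.

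Finally I would assemble. For each fixed $K$, the finite-dimensional central limit theorem for $(\xi_1,\dots,\xi_K)$ (as in \cite{Johansson98,BY}, with covariance identified by Proposition~\ref{pro.cov}) together with the continuous mapping theorem gives the limit, after centering by its mean, of the degree-$\le K$ part of $N^2\mathcal A_{N,\omega}$. The centering is the mean of the diagonal part, which equals $\tfrac{2}{\pi^2}\sum_{k\le k_*}\tfrac{r_\omega^{2k}}{k^2}\var(\xi_k)+O(1)=\tfrac1{\beta\pi^2}\sum_{k\le k_*}\tfrac{r_\omega^{2k}}{k}+O(1)=\tfrac1{\beta\pi^2}\big(\!-\log(1-r_\omega^{2})\big)+O(1)=\tfrac{\alpha\log N}{\beta\pi^2}+b_\beta+o(1)$ once all the remaining $O(1)$ contributions (the $-\tfrac{\log2}{\beta\pi^2}$ from $-\log(1-r_\omega^{2})$, the $\tfrac1{2\beta\pi^2}$ from $\eta_1^2$, and the $m_k$-dependent pieces) are gathered. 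The indices $K<k\le k_*$ are then removed using the variance estimate of Proposition~\ref{pro.cov}: that contribution has variance $O(1/K)$ uniformly in $N$. Letting $N\to\infty$ and then $K\to\infty$ yields the stated limit, in which the generic (near-)diagonal part produces $\tfrac1{\beta\pi^2}\sum_k\big(\tfrac1k(Z_k^2-1)-\tfrac1{\sqrt{k(k+2)}}Z_kZ_{k+2}\big)$, the low-degree corrections to $\var(\xi_k)$, $\cov(\xi_k,\xi_{k+2})$ and to the means $m_k$ produce $a_\beta$, and the leftover constants give $b_\beta$; convergence of the limit series is immediate from $\sum_k k^{-2}<\infty$.
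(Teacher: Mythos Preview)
Your plan is essentially the paper's proof: the same Fourier--Chebyshev expansion to the quadratic form in $t_k(\bar H)$, truncation at $k_*\asymp\omega^{-1}(\log N)^2$, passage from $\bar H$ to $H$ via rigidity, reduction to the moment estimates of Propositions~\ref{pro.cov} and~\ref{pro.exp} (proved by Green-function contour representation and cumulant expansion), and the $N\to\infty$ then $K\to\infty$ limit using the Bai--Yao CLT for the finite block and the variance bound for the tail. One correction in your final assembly: the means $m_k$ are \emph{not} only low-degree corrections; by Theorem~\ref{thm.by} one has $m_k=\tfrac{2-\beta}{4}(1+(-1)^k)+O(\mathbf 1_{k\le 4})$, so the cross terms $m_k\xi_k$ and $m_{k+2}\xi_k$ persist for all even $k$ and produce the second infinite series $\tfrac{2-\beta}{\pi^2}\sum_{k\ge1}\big(\tfrac{k+2}{4k^{3/2}(k+1)}Z_{2k}-\tfrac{1}{4k\sqrt{k+1}}Z_{2k+2}\big)$ in the limit, which your summary omits.
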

\begin{rem}
	Note that in the Gaussian case where $\sigma^2=3-\beta$ and $c_4=0$, Theorem \ref{thm. main result} simplifies to
	\begin{align*}
	&N^2\mathcal{A}_{N,\omega}-\frac{\alpha \log N}{\beta\pi^2}+\frac{\log 2-1/2}{\beta\pi^2}-(2-\beta)\Big(\frac{1}{48}-\frac{1}{8\pi^2}\Big)\nonumber\\
	&\quad\overset{d}{\longrightarrow}  \frac{1}{\beta\pi^2}\sum_{k=1}^\infty\bigg( \frac{1}{ k}(Z_k^2-1)-\frac{1}{\sqrt{k(k+2)}}Z_kZ_{k+2}\bigg)\nonumber\\
	&\quad\quad+\frac{2-\beta}{\pi^2}\sum_{k=1}^\infty\Big(\frac{k+2}{4k^{3/2}(k+1)}Z_{2k}-\frac{1}{4k\sqrt{k+1}}Z_{2k+2}\Big)+\frac{1}{2\beta \pi^2}(Z_1^2-1)\,. 
	\end{align*}
\end{rem}

\begin{rem} We also remark here that our proof of the main theorem will rely on the local semicircle law on scale $\omega$. We believe that a finer argument shall allow one to push the scale $\omega$ to $\frac{1}{N}$, i.e., $\alpha=1$, which is the optimal scale of local law. However, within the framework of the current proof strategy, getting the limiting distribution of the original $\mathcal{A}_N$ requires one to go even below the scale $\frac{1}{N}$.  
\end{rem}

  Finally, we remark here that although $\frac{\alpha \log N}{\beta\pi^2}+b_{\beta}$ shall be far away from the expectation of the original CvM statistic $\mathcal{A}_N$,  the fluctuation in the RHS  of (\ref{19110601}) is believed to be the same as the fluctuation of $\mathcal{A}_N$. We refer to Figures \ref{fig2} and \ref{fig3} for some simulation results for the random matrices $H=(W+W')/{2\sqrt{2N}}$. Here for Figure \ref{fig2} we take $W=(w_{ij})$ to be a $N\times N$ random matrix with i.i.d. standard real Gaussian elements; for Figure \ref{fig3}, we take $W=(w_{ij})$ to be a $N\times N$ random matrix with i.i.d. standard  Rademacher elements, i.e.,  $\mathbb{P}(w_{ij}=1)=\mathbb{P}(w_{ij}=-1)=\frac12$.  For the simulation purpose, we truncate the series  on the RHS of (\ref{19110601})  at $k=300$ and its density function is given by the blue curve. Green curve is a smooth approximation of the histogram of the original CvM statistic $\mathcal{A}_N$ for $H$ with dimension $N=400$. Both curves are plotted based on 
 6000 repetitions of simulation study.  The red curve is a shift of the Green one to mean $0$.   We notice that the centered histogram of the original $\mathcal{A}_N$ (red curve) matches perfectly the plot of the density of the random variables on the RHS of (\ref{19110601})  (blue curve) in both two figures.

 \begin{figure}[!ht]
           \begin{floatrow}
             \ffigbox{\includegraphics[width=7cm, height=5cm]{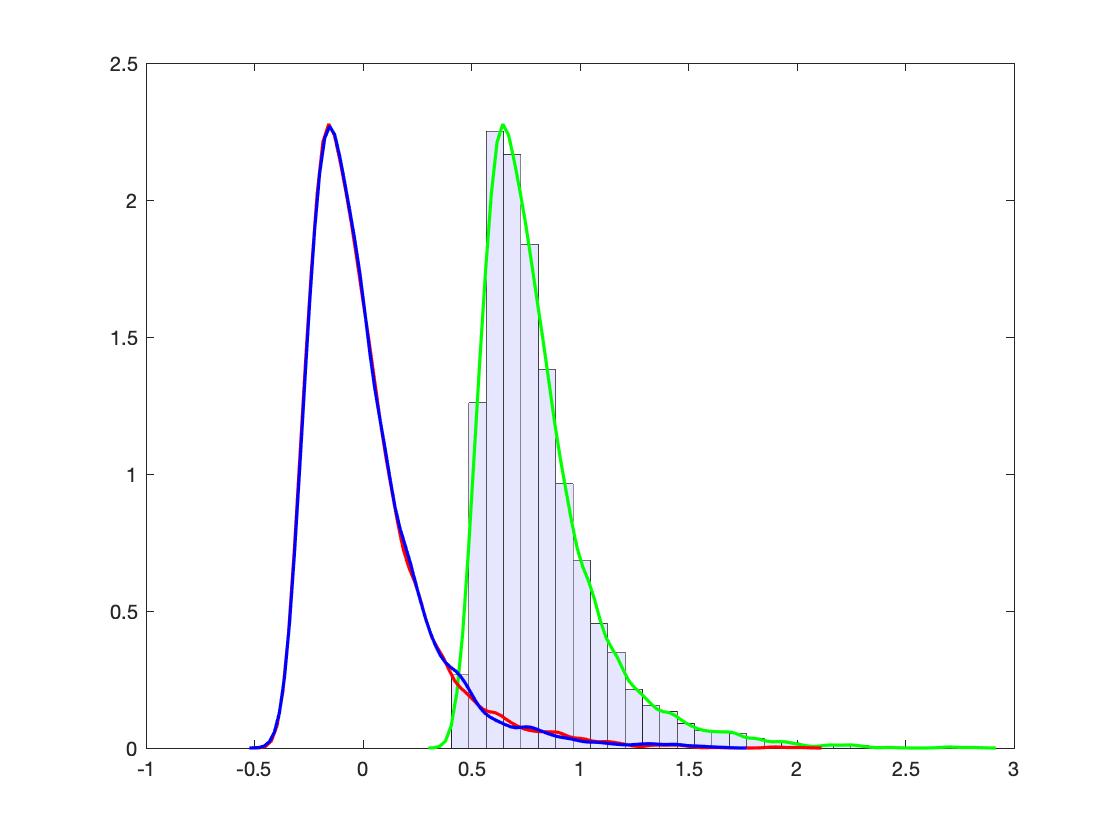}}{\caption{{\footnotesize{Gaussian case} }}\label{fig2}}
             \ffigbox{\includegraphics[width=7cm, height=5cm]{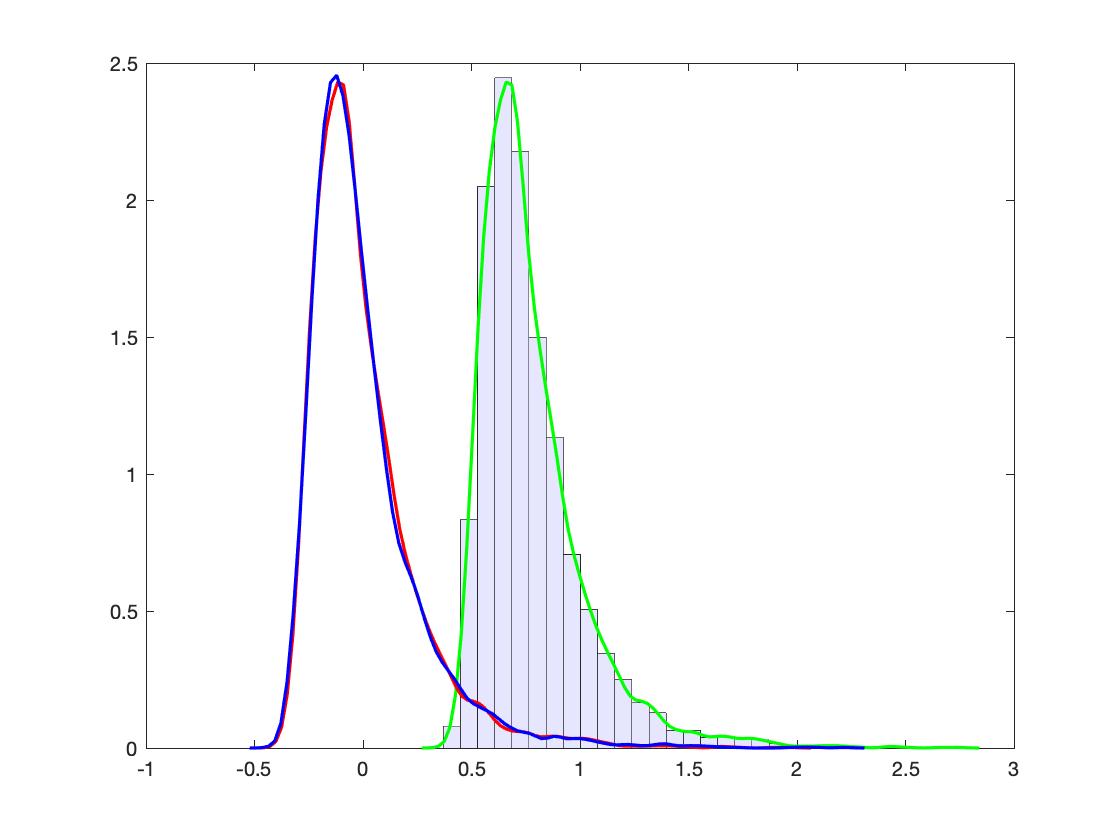}}{\caption{{\footnotesize{Rademacher case}}}\label{fig3}}
           \end{floatrow}
        \end{figure}

\subsection{Organization}   The paper is organized as follows. In Section  \ref{s. preliminary}, we state some preliminaries. Section \ref{sec 3} will be devoted to the proof of the main result, Theorem \ref{thm. main result}, based on Propositions \ref{pro.cov} and \ref{pro.exp}, whose proofs will be stated in Section \ref{sec 4}.  In Section \ref{sec 5}, we prove our main technical result, Proposition \ref{pro.19052401}, which is used in the proofs in Section \ref{sec 4}.  In Section \ref{section 6} we provide some further discussion on the CvM statistics for generally distributed random matrices and in Appendix \ref{App:A} we derive the limiting distribution of a CvM type statistic for the toy model CUE.  

\subsection{Conventions}
 Throughout this paper, we regard $N$ as our fundamental large parameter. Any quantities that are not explicit constant or fixed may depend on N; we almost always omit the argument $N$ from our notation. We use $\|A\|$ to denote the operator norm of a matrix $A$ and use $\|\mathbf{u}\|_2$ to denote the $L^2$-norm of a vector $\mathbf{u}$. We use $\tau$ to denote some generic (small) positive constant, whose value may change from one expression to the next. Similarly, we use $C$ to denote some generic (large) positive constant. For $A \in \bb C$, $B>0$ and parameter $a$, we use $A = O_a(B)$ to denote $|A|\leq 
C_aB$ with some positive constant $C_a$ which may depend on $a$ and $A\asymp B$ to denote $C^{-1}B \leq |A| \leq CB$. When we write $A \ll B$ and $A \gg B$, we mean $|A| \leq CN^{-\tau}B$ and $|A| \geq C^{-1}N^{\tau}B$ for some constants $C,\tau>0$ respectively.

\section{Preliminaries} \label{s. preliminary}

Throughout  the paper, for an $N\times N$ matrix $A$, we write $\ul{A}\deq \frac{1}{N} \tr A$, and we abbreviate $A_{ij}^m\deq (A_{ij})^m$. We emphasize here that $A_{ij}^m$ is different from $(A^m)_{ij}$ in general, where the latter apparently means the $(i,j)$ entry of $A^m$.  For $\b u,\b v \in \bb C^{N}$, we abbreviate
\begin{align}
A_{\b u\b v}\deq \langle \b u , A\b v \rangle  \quad A_{\b u i}\deq \langle \b u , A\b e_i \ra\quad  \mbox{and} \quad  A_{i \b u}\deq \langle \b e_i , A\b u \ra\,, \label{notation for quadratic form}
\end{align}
where $\b e_i$ is the standard $i$-th basis vector of $\bb R^{N}$. We denote $\langle X \rangle \deq X-\bb E X$ for any random variable $X$ with finite expectation.

\subsection{Green function and the local semicircle law}
For $z \in \bb C\setminus \mathbb{R}$, we denote the Green function of $H$ and the Stieltjes transform of its empirical eigenvalue distribution by 
\begin{align*}
G(z)\deq(H-z)^{-1},  \quad \ul{G}(z) =\frac{1}{N}\tr G(z)=\frac{1}{N}\sum_{i=1}^N \frac{1}{\lambda_i-z}\,.
\end{align*}
Correspondingly, we denote the   Stieltjes transform  of the semicircle law by 
\begin{align}
 m(z)= \int \frac{1}{x-z} \rho_{sc}(x) {\rm d} x=2(-z+\sqrt{z^2-1})\,. \label{19101520}
\end{align}
For any positive integer $n$, we use the shorthand notation $\llbracket 1, n\rrbracket\deq \{1,2,...,n\}$. We also adopt the notion of {\it stochastic domination}  introduced in \cite{EKY13}. It provides a convenient way of making precise statements of the form ``$\mathsf{X}^{(N)}$ is bounded by $\mathsf{Y}^{(N)}$ up to  small powers of $N$ with high probability".
\begin{defn} [Stochastic domination]  \label{def2.1}
	Let
	\begin{equation*}
	\mathsf{X}=(\mathsf{X}^{(N)}(u):  N \in \mathbb{N}, \ u \in \mathsf{U}^{(N)}), \   \mathsf{Y}=(\mathsf{Y}^{(N)}(u):  N \in \mathbb{N}, \ u \in \mathsf{U}^{(N)}),
	\end{equation*}
	be two families of random variables, where $\mathsf{Y}$ is nonnegative, and $\mathsf{U}^{(N)}$ is a possibly $N$-dependent parameter set. We say that $\mathsf{X}$ is stochastically dominated by $\mathsf{Y},$ uniformly in $u,$ if for all small $\epsilon>0$ and large $ D>0$, we have 
	\begin{equation*}
	\sup_{u \in \mathsf{U}^{(N)}} \mathbb{P} \Big( \big|\mathsf{X}^{(N)}(u)\big|>N^{\epsilon}\mathsf{Y}^{(N)}(u) \Big) \leq N^{- D},
	\end{equation*}   
	for large enough $N \geq  N_0(\epsilon, D).$  If $\mathsf{X}$ is stochastically dominated by $\mathsf{Y}$, uniformly in $u$, we use the
	notation $\mathsf{X} \prec \mathsf{Y}$ , or equivalently $\mathsf{X} = O_{\prec}(\mathsf{Y})$. Note that in the special case when $\mathsf{X}$ and $\mathsf{Y}$ are deterministic, $\mathsf{X} \prec\mathsf{Y}$
	means that for any given $\varepsilon>0$,  $|\mathsf{X}^{(N)}(u)|\leq N^{\epsilon}\mathsf{Y}^{(N)}(u)$ uniformly in $u$, for all sufficiently large $N\geq N_0(\epsilon)$.
	
	Throughout this paper, the stochastic domination will always be uniform in
	all parameters (mostly are matrix indices and the spectral parameter $z$) that are not explicitly fixed.
\end{defn}

We have the following elementary result about stochastic domination.

\begin{lem} \label{prop_prec} Let
	\begin{equation*}
	\mathsf{X}_i=(\mathsf{X}^{(N)}_i(u):  N \in \mathbb{N}, \ u \in \mathsf{U}^{(N)}), \   \mathsf{Y}_i=(\mathsf{Y}_i^{(N)}(u):  N \in \mathbb{N}, \ u \in \mathsf{U}^{(N)}),\quad i=1,2
	\end{equation*}
	be families of  random variables, where $\mathsf{Y}_i, i=1,2,$ are nonnegative, and $\mathsf{U}^{(N)}$ is a possibly $N$-dependent parameter set.	Let 
	\begin{align*}
	\Phi=(\Phi^{(N)}(u): N \in \mathbb{N}, \ u \in \mathsf{U}^{(N)})
	\end{align*}
	be a family of deterministic nonnegative quantities. We have the following results:
	
(i)	If $\mathsf{X}_1 \prec \mathsf{Y}_1$ and $\mathsf{X}_2 \prec \mathsf{Y}_2$ then $\mathsf{X}_1+\mathsf{X}_2 \prec \mathsf{Y}_1+\mathsf{Y}_2$ and  $\mathsf{X}_1 \mathsf{X}_2 \prec \mathsf{Y}_1 \mathsf{Y}_2$.

 (ii) Suppose $\mathsf{X}_1 \prec \Phi$, and there exists a constant $C>0$ such that  $|\mathsf{X}_1^{(N)}(u)| \leq N^{C}\Phi^{(N)}(u)$ a.s. uniformly in $u$ for all sufficiently large $N$. Then $\E \mathsf{X}_1 \prec \Phi$. 
\end{lem}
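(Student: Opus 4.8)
The plan is to argue both parts by elementary probability: part (i) via union bounds over the two ``good'' events supplied by the hypotheses, and part (ii) via splitting the expectation at the threshold $N^\epsilon\Phi$ and using the deterministic almost-sure bound to kill the tail. No machinery beyond the definition of $\prec$ is needed.

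For (i), I would fix $\epsilon>0$ and $D>0$ and introduce, for each $i=1,2$, the event $\Omega_i\deq\{|\mathsf{X}_i^{(N)}(u)|\leq N^{\epsilon/2}\mathsf{Y}_i^{(N)}(u)\}$, which by $\mathsf{X}_i\prec\mathsf{Y}_i$ satisfies $\P(\Omega_i^c)\leq N^{-D}$ for $N\geq N_0(\epsilon,D)$, uniformly in $u$. On $\Omega_1\cap\Omega_2$ one has simultaneously $|\mathsf{X}_1+\mathsf{X}_2|\leq N^{\epsilon/2}(\mathsf{Y}_1+\mathsf{Y}_2)\leq N^\epsilon(\mathsf{Y}_1+\mathsf{Y}_2)$ and $|\mathsf{X}_1\mathsf{X}_2|=|\mathsf{X}_1|\,|\mathsf{X}_2|\leq N^{\epsilon/2}\mathsf{Y}_1\cdot N^{\epsilon/2}\mathsf{Y}_2=N^\epsilon\mathsf{Y}_1\mathsf{Y}_2$, while $\P((\Omega_1\cap\Omega_2)^c)\leq 2N^{-D}\leq N^{-(D-1)}$. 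Since $D$ was arbitrary, this gives both $\mathsf{X}_1+\mathsf{X}_2\prec\mathsf{Y}_1+\mathsf{Y}_2$ and $\mathsf{X}_1\mathsf{X}_2\prec\mathsf{Y}_1\mathsf{Y}_2$, and all estimates are uniform in $u$ because the inputs are.

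For (ii), I would first note that $\E\mathsf{X}_1$ and $\Phi$ are deterministic, so $\E\mathsf{X}_1\prec\Phi$ simply means $|\E\mathsf{X}_1^{(N)}(u)|\leq N^\epsilon\Phi^{(N)}(u)$ uniformly in $u$ for all large $N$. Fixing $\epsilon>0$, I would invoke $\mathsf{X}_1\prec\Phi$ with exponent $\epsilon/2$ and with $D\deq C$, where $C>0$ is the constant from the hypothesized bound $|\mathsf{X}_1|\leq N^C\Phi$ a.s., so that $\P(|\mathsf{X}_1|>N^{\epsilon/2}\Phi)\leq N^{-C}$ for $N$ large, and then estimate
\begin{align*}
|\E\mathsf{X}_1|\leq\E|\mathsf{X}_1|&=\E\big[|\mathsf{X}_1|\,\mathbf{1}(|\mathsf{X}_1|\leq N^{\epsilon/2}\Phi)\big]+\E\big[|\mathsf{X}_1|\,\mathbf{1}(|\mathsf{X}_1|>N^{\epsilon/2}\Phi)\big]\\
&\leq N^{\epsilon/2}\Phi+N^C\Phi\cdot\P\big(|\mathsf{X}_1|>N^{\epsilon/2}\Phi\big)\leq N^{\epsilon/2}\Phi+\Phi\leq N^\epsilon\Phi
\end{align*}
for $N$ sufficiently large; the degenerate case $\Phi^{(N)}(u)=0$ is trivial, since then the a.s. bound forces $\mathsf{X}_1^{(N)}(u)=0$ a.s. Since $\epsilon>0$ was arbitrary, $\E\mathsf{X}_1\prec\Phi$, uniformly in $u$.

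The proof presents no genuine obstacle; the single point requiring care is in (ii), where the deterministic polynomial a.s. bound is indispensable — stochastic domination alone does not control expectations — and where the exponent $D$ in the definition of $\prec$ must be chosen in terms of $C$ so that the tail contribution $N^C\Phi\cdot N^{-D}$ is absorbed.
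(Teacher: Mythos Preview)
Your proof is correct and follows essentially the same approach as the paper: part (i) is dismissed there as obvious from the definition (your union-bound argument is just the spelled-out version), and part (ii) is done by the identical splitting of $\E|\mathsf{X}_1|$ at the threshold $N^\varepsilon\Phi$ and bounding the tail via the deterministic a.s.\ bound together with the high-probability estimate. The only cosmetic difference is that you split at $N^{\epsilon/2}\Phi$ and fix $D=C$ explicitly, whereas the paper splits at $N^\varepsilon\Phi$ and leaves the choice of $D$ implicit.
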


\begin{proof}
	Part (i) is obvious from Definition \ref{def2.1}. For any fixed $\varepsilon>0$, we have
	\[
	|\bb E \mathsf{X}_1| \leq \bb E |\mathsf{X}_1{\bf 1}(|\mathsf{X}_1|\leq  N^{\varepsilon}\Phi)|+\bb E |\mathsf{X}_1{\bf 1}(|\mathsf{X}_1|\geq  N^{\varepsilon}\Phi)|\leq N^{\varepsilon}\Phi+ N^{C}\Phi \bb P(|\mathsf{X}_1|\geq N^{\varepsilon}\Phi)=O(N^{\varepsilon}\Phi)
	\] 
 for sufficiently large $N\geq N_0(\epsilon)$. This proves part (ii). 
\end{proof}

Fix $\tau>0$, let us define the spectral domains
\begin{equation*}
\mathcal{S}\deq\big\{ E+\ii \eta: |E|\leq 10, 0 < \eta \leq 10\big\}\quad
\end{equation*}
and
\begin{equation*} \quad \mathcal{S}^{o}\equiv \mathcal{S}^{o}(\tau)\deq\big\{ E+\ii \eta\in \cal S: |E|\geq 1+N^{-2/3+\tau}\big\}\,.
\end{equation*}
We also define the distance to spectral edge by
\[
\kappa \equiv \kappa_E\deq |E^2-1|\,.
\]
We have the following isotropic local semicircle law for Wigner matrices from \cite[Theorems 2.2, 2.3]{KY13} and \cite[Theorem 10.3]{BK16}. 

\begin{thm}[Local semicircle law] \label{thm. local semicircle law} Let $\b u, \b v \in \bb C^{N}$ be deterministic with $\|\b u\|_2=\|\b v\|_2=1$. For Green function, we have the following estimates 
	\begin{equation} \label{2.2}
	|G_{\b u\b v}(z)-m(z)\langle \b u, \b v\rangle|\prec \sqrt{\frac{\Im m(z)}{N\eta}}+\frac{1}{N\eta} \quad \mbox{and} \quad 
	|\ul{G}(z)-m(z)|\prec  \frac{1}{N\eta}
	\end{equation}
	uniformly for $z=E+\ii \eta \in \mathcal{S}$. Moreover, outside the bulk of the spectrum, we have the stronger estimates
	\begin{equation*}
	|G_{\b u\b v}(z)-m(z)\langle \b u, \b v\rangle|\prec \frac{1}{\sqrt{N}(\kappa+\eta)^{1/4}} \quad \mbox{and} \quad
	|\ul{G}(z)-m(z)|\prec  \frac{1}{N(\eta+\kappa)} \label{19083101}
	\end{equation*}
	uniformly for $z=E+\ii \eta \in \mathcal{S}^{o}$. 
\end{thm}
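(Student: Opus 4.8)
The result is quoted verbatim from \cite{KY13,BK16}, so one may simply invoke those papers; the route one would reconstruct is the standard resolvent scheme. By polarization it is enough to bound $G_{\b v\b v}(z)-m(z)$ for unit vectors $\b v$ (write $\langle\b u,(G-m)\b v\rangle$ as $\tfrac14\sum_{s\in\{\pm1,\pm\ii\}}\bar s\,\langle\b u+s\b v,(G-m)(\b u+s\b v)\rangle$ and rescale to unit length), together with the averaged estimate for $\underline G-m$. One establishes the averaged and entrywise laws first, then bootstraps to the isotropic one.

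For the averaged and entrywise laws, the Schur/Feshbach complement gives, for each $i$,
\[
\frac{1}{G_{ii}}=H_{ii}-z-\sum_{k,l\neq i}H_{ik}G^{(i)}_{kl}H_{li},
\]
with $G^{(i)}$ the resolvent of the $i$-th minor. The deterministic part of the quadratic form is $\tfrac{1}{4N}\tr G^{(i)}$, which by the resolvent identity between $G$ and $G^{(i)}$ equals $\tfrac14\underline G+O_\prec((N\eta)^{-1})$, while the fluctuation $Z_i:=\sum_{k,l\neq i}H_{ik}G^{(i)}_{kl}H_{li}-\tfrac{1}{4N}\tr G^{(i)}$ obeys a large-deviation bound for quadratic forms in the independent entries, of size $O_\prec\big((N^{-2}\sum_{k,l}|G^{(i)}_{kl}|^2)^{1/2}\big)$; the Ward identity $\sum_{k,l}|G_{kl}|^2=\eta^{-1}\Im\tr G$ turns this into $O_\prec(\sqrt{\Im m/(N\eta)})$ once $\underline G\approx m$ is known. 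Hence $G_{ii}=(-z-\tfrac14\underline G)^{-1}+(\text{error})$; averaging over $i$ and Taylor-expanding shows that $\underline G$ satisfies $\tfrac14\underline G^2+z\underline G+1=(\text{error})$, whose unperturbed version is solved by $m(z)$. A self-improving bootstrap in $\eta$ (from large $\eta$, where $|\underline G-m|$ is trivially small, decreasing $\eta$ along a fine net and using Lipschitz continuity of $z\mapsto\underline G(z)$), together with the stability bound $|v-m|\le C|\epsilon|/(\sqrt{\kappa+\eta}+\sqrt{|\epsilon|})$ for $\tfrac14v^2+zv+1=\epsilon$, yields $|\underline G-m|\prec(N\eta)^{-1}$ on $\mathcal S$ and the sharper $(N(\kappa+\eta))^{-1}$ on $\mathcal S^o$, where $\sqrt{z^2-1}$ is quantitatively bounded below and $\Im m\asymp\eta/\sqrt{\kappa+\eta}$. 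The diagonal bound $\max_i|G_{ii}-m|\prec\sqrt{\Im m/(N\eta)}+(N\eta)^{-1}$ and the off-diagonal bound $\max_{i\neq j}|G_{ij}|\prec\sqrt{\Im m/(N\eta)}$ drop out along the way, the latter from $G_{ij}=-G_{ii}\sum_{k\neq i}H_{ik}G^{(i)}_{kj}$ and again a large-deviation estimate.

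For the isotropic law, decompose $G_{\b v\b v}-m=\sum_i|v_i|^2(G_{ii}-m)+\sum_{i\neq j}\bar v_iv_jG_{ij}$. The off-diagonal sum is handled by inserting the (symmetrized) minor expansion $G_{ij}=-G_{ii}(HG^{(i)})_{ij}$ and applying a large-deviation bound to the resulting bilinear form in the independent entries; using $\sum_j|v_j|^2=1$ and $\sum_k|G^{(i)}_{kj}|^2\prec\Im m/\eta$ this is $O_\prec(\sqrt{\Im m/(N\eta)})$. The diagonal sum requires a weighted fluctuation-averaging estimate, $\big|\sum_iw_i(G_{ii}-m)\big|\prec\|w\|_\infty(\Im m/(N\eta)+(N\eta)^{-2})$, proved by expanding a high moment of the left side, applying partial expectations (removing a row/column) to create cancellations, and tracking the entrywise smallness gained by each \emph{resolved} index. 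Combining the two pieces and passing to the worst case over the spectral domains gives the claimed bounds; near the edge the extra smallness of $\Im m$ upgrades $\sqrt{\Im m/(N\eta)}$ to $N^{-1/2}(\kappa+\eta)^{-1/4}$ and $\Im m/(N\eta)$ to $(N(\kappa+\eta))^{-1}$. The only genuinely delicate input is this fluctuation-averaging estimate — it is also what replaces the naive concentration bound $(N\eta)^{-1/2}$ by the optimal $(N\eta)^{-1}$ in the averaged law — and its proof, a careful bookkeeping of which resolvent entries become \emph{fresh} after each partial expectation and of the off-diagonal decay, is where the independence structure and moment bounds of the Wigner ensemble are really used; the remaining ingredients (Schur complements, Ward identities, scalar stability, polarization) are routine once it is in place.
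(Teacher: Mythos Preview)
Your proposal is correct: the paper does not prove this theorem but simply cites it from \cite{KY13,BK16}, exactly as you note in your first sentence, and the standard resolvent/Schur-complement/fluctuation-averaging route you sketch is indeed the proof given in those references (with the normalization $\bb E|H_{ij}|^2=\tfrac1{4N}$ correctly carried through to the self-consistent equation $\tfrac14\underline G^2+z\underline G+1=\text{(error)}$).
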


\begin{rem}
(i) From Theorem \ref{thm. local semicircle law}, one can easily deduce (see e.g.\ \cite[Theorems 2.8, 2.9]{BK16})  that
\begin{align}
|\lambda_i-\mu_i|\prec N^{-\frac23} \big(i\wedge (N-i+1)\big)^{-\frac13} \label{19052220}
\end{align}
uniformly in $i\in \llbracket 1, N \rrbracket$, and
\begin{align}
\Big|\big|\big\{i:\lambda_i\in \mathcal{I}\big\}\big|-N\int_{\mathcal{I}} \rho_{sc}(x) {\rm d} x\Big|\prec 1  \label{19052221}
\end{align}
uniformly for any interval $\mathcal{I}\subset \mathbb{R}$.

(ii) Combining with Lemma \ref{prop_prec} and the deterministic bound $\ul{G}(z)\leq \frac{1}{\eta}$, Theorem \ref{thm. local semicircle law} can be used to estimate the moments of the Green function. 
For example, \eqref{2.2} and Lemma \ref{prop_prec} imply
\begin{equation} \label{2.5}
\bb E 	|\ul{G}(z)-m(z)|^n\prec  \Big(\frac{1}{N\eta}\Big)^n
\end{equation}
for any fixed $n \in \bb N$. Note that 
$$
|\ul{G}-\bb E \ul{G}|\leq |\ul{G}-m(z)|+\bb E |\ul{G}-m(z)| \prec \frac{1}{N\eta}\,,
$$
 thus the shift $m(z)$ in \eqref{2.5} can also be replaced by the expectation $\bb E \ul{G}$. 
\end{rem}

If $h$ is a real-valued random variable with finite moments
of all order, we denote by $\cal C_n(h)$ the $n$th cumulant of $h$, i.e.
\[
\cal C_n(h)\deq (-\ii)^n \cdot\Big(\frac{{\rm d}^n}{{\rm d} \lambda^n} \log \bb E \e{\ii  \lambda h}\Big) \Big{|}_{\lambda=0}\,.
\]
Below we state the cumulant expansion formula, whose proof is given in e.g. \cite[Appendix A]{HKR}. 
\begin{lem}[Cumulant expansion] \label{lem:cumulant_expansion}
	Let $f:\R\to\C$ be a smooth function, and denote by $f^{(n)}$ its $n$th derivative. Then, for every fixed $\ell \in\N$, we have 
	\begin{equation}\label{eq:cumulant_expansion}
	\mathbb{E}\big[h\cdot f(h)\big]=\sum_{n=0}^{\ell}\frac{1}{n!}\mathcal{C}_{n+1}(h)\mathbb{E}[f^{(n)}(h)]+\cal R_{\ell+1},
	\end{equation}	
	assuming that all expectations in \eqref{eq:cumulant_expansion} exist, where $\cal R_{\ell+1}$ is a remainder term (depending on $f$ and $h$), such that for any $t>0$,
	\begin{equation*}  
	\cal R_{\ell+1} = O(1) \cdot \bigg(\E\sup_{|x| \le |h|} \big|f^{(\ell+1)}(x)\big|^2 \cdot \E \,\big| h^{2\ell+4} \mathbf{1}(|h|>t) \big| \bigg)^{1/2} +O(1) \cdot \bb E |h|^{\ell+2} \cdot  \sup_{|x| \le t}\big|f^{(\ell+1)}(x)\big|\,.
	\end{equation*}
\end{lem}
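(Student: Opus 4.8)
The identity is classical; the cleanest route combines the moment--cumulant recursion with Taylor's theorem, and I would organise it in three short steps. \emph{Step 1 (exactness on polynomials).} Writing $m_p\deq\bb E[h^p]$ and regarding $\log\bb E\,\e{\ii\lambda h}=\sum_{n\geq1}\frac{\cal C_n(h)}{n!}(\ii\lambda)^n$ and $\bb E\,\e{\ii\lambda h}=\sum_{p\geq0}\frac{m_p}{p!}(\ii\lambda)^p$ as formal power series in $\lambda$ (legitimate since all moments of $h$ are finite, and for each order only finitely many moments are involved), differentiation of $\bb E\,\e{\ii\lambda h}=\exp\bigl(\log\bb E\,\e{\ii\lambda h}\bigr)$ and comparison of coefficients yield the moment--cumulant recursion
\[
m_{p+1}=\sum_{j=0}^{p}\binom{p}{j}\,\cal C_{j+1}(h)\,m_{p-j}\,,\qquad p\geq0\,.
\]
Taking $f(x)=x^p$ and using $f^{(n)}\equiv0$ for $n>p$, one checks directly that \eqref{eq:cumulant_expansion} holds with $\cal R_{\ell+1}=0$ whenever $f$ is a polynomial of degree $\leq\ell$. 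By linearity it therefore suffices to control the remainder when $f$ is replaced by its Taylor tail.

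\emph{Step 2 (the remainder for general $f$).} Write $f=p_\ell+\rho_\ell$, where $p_\ell(x)=\sum_{k=0}^{\ell}\frac{f^{(k)}(0)}{k!}x^k$ and $\rho_\ell(x)=\frac{1}{\ell!}\int_0^x(x-s)^\ell f^{(\ell+1)}(s)\,\dd s$, so that $\rho_\ell^{(n)}(0)=0$ and $\rho_\ell^{(n)}(x)=\frac{1}{(\ell-n)!}\int_0^x(x-s)^{\ell-n}f^{(\ell+1)}(s)\,\dd s$ for $0\leq n\leq\ell$. Applying Step 1 to $p_\ell$, and inserting the recursion into $\bb E[h\,p_\ell(h)]=\sum_k\frac{f^{(k)}(0)}{k!}m_{k+1}$, swapping the order of summation, and recognising that the resulting inner sums are exactly the truncated Taylor polynomials of the $f^{(n)}$ evaluated under $\bb E$, one reassembles $\sum_{n=0}^\ell\frac1{n!}\cal C_{n+1}(h)\bb E[f^{(n)}(h)]$ and is left with
\[
\cal R_{\ell+1}=\bb E\bigl[h\,\rho_\ell(h)\bigr]-\sum_{n=0}^{\ell}\frac1{n!}\cal C_{n+1}(h)\,\bb E\bigl[\rho_\ell^{(n)}(h)\bigr]\,.
\]
The Lagrange-type bounds $|\rho_\ell(x)|\leq\frac{|x|^{\ell+1}}{(\ell+1)!}\sup_{|s|\leq|x|}|f^{(\ell+1)}(s)|$ and $|\rho_\ell^{(n)}(x)|\leq\frac{|x|^{\ell-n+1}}{(\ell-n+1)!}\sup_{|s|\leq|x|}|f^{(\ell+1)}(s)|$ then give $|\cal R_{\ell+1}|\leq C_\ell\,\bb E\bigl[(|h|+|h|^{\ell+2})\sup_{|s|\leq|h|}|f^{(\ell+1)}(s)|\bigr]$, where $C_\ell$ absorbs the cumulants $\cal C_{n+1}(h)$, $n\leq\ell$, each of which is a fixed universal polynomial in $m_1,\dots,m_{n+1}$.

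\emph{Step 3 (splitting at the threshold $t$).} For any $t>0$, split the last expectation according to $|h|\leq t$ or $|h|>t$. On $\{|h|\leq t\}$ one uses $\sup_{|s|\leq|h|}|f^{(\ell+1)}(s)|\leq\sup_{|s|\leq t}|f^{(\ell+1)}(s)|$ together with elementary moment estimates to produce the second term $O(1)\cdot\bb E|h|^{\ell+2}\cdot\sup_{|x|\leq t}|f^{(\ell+1)}(x)|$ of the claimed bound. On $\{|h|>t\}$ one applies the Cauchy--Schwarz inequality, separating the factor $\sup_{|s|\leq|h|}|f^{(\ell+1)}(s)|$ from the polynomial factor; squaring the latter turns the powers $\leq\ell+2$ into powers $\leq2\ell+4$, producing the first term $O(1)\bigl(\bb E\sup_{|x|\leq|h|}|f^{(\ell+1)}(x)|^2\cdot\bb E|h^{2\ell+4}\mathbf{1}(|h|>t)|\bigr)^{1/2}$.

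There is no genuine obstacle here: the result is classical and is written out in \cite[Appendix~A]{HKR}. The only points requiring care are the combinatorial reorganisation in Step 2 --- tracking precisely which Taylor remainders of the functions $f^{(n)}$ are discarded and verifying that they coincide with exactly the terms estimated in Step 3 --- and a careful accounting of the various powers of $|h|$ generated by the remainders when performing the split at $t$. An equivalent route is induction on $\ell$, with base case the Stein-type identity $\cal R_1=\bb E[(h-\bb E h)f(h)]=\tfrac12\bb E[(h-h')(f(h)-f(h'))]$ for an independent copy $h'$ of $h$, followed by the same split at $t$; the direct computation above seems more transparent.
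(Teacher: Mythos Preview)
The paper does not prove this lemma itself; it simply cites \cite[Appendix~A]{HKR}. Your sketch is a correct outline of exactly that standard argument---Taylor-expand $f$ to order $\ell$, use the moment--cumulant recursion to dispose of the polynomial part exactly, express the remainder in terms of the Taylor tails $\rho_\ell^{(n)}$, and split at the threshold $t$ with Cauchy--Schwarz on $\{|h|>t\}$---so there is nothing to compare. The one point you rightly flag as needing care, the accounting of the various powers $|h|^{\ell-n+1}$ against the cumulants $\cal C_{n+1}(h)$ when matching the stated form of the bound, is handled by noting that the $O(1)$ constants are allowed to depend on $\ell$ and on the first $\ell+2$ moments of $h$; this is harmless in every application in the paper since $\bb E|H_{ij}|^a=O(N^{-a/2})$ uniformly in $i,j$.
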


The following result gives bounds on the cumulants of the entries of $H$, whose proof follows by the homogeneity of the cumulants.
\begin{lem} \label{Tlemh}
	For every $n \in \bb N^*$ we have
	\begin{equation*}
	\cal C_{n}(H_{ij})=O_{n}(N^{-n/2})
	\end{equation*}
	uniformly for all $i,j$.
\end{lem}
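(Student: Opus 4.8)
The plan is to combine the moment--cumulant relation with the entrywise moment bound $\bb E|H_{ij}|^a=O_a(N^{-a/2})$ that is part of the model assumptions. Recall that for a real random variable $h$ with finite moments of all orders, setting $\mu_k\deq\bb E h^k$, Möbius inversion on the lattice of set partitions yields
\[
\cal C_n(h)=\sum_{\pi\in\cal P(n)}(-1)^{|\pi|-1}(|\pi|-1)!\prod_{B\in\pi}\mu_{|B|}\,,
\]
where $\cal P(n)$ is the collection of partitions of $\{1,\dots,n\}$. The point I would extract from this is purely structural: $\cal C_n(h)$ is a finite $\bb Z$-linear combination of the monomials $\prod_{B\in\pi}\mu_{|B|}$, with the number of terms $|\cal P(n)|$ and all coefficients controlled by quantities depending on $n$ alone, and each monomial is \emph{isobaric of weight $n$}, meaning the block sizes obey $\sum_{B\in\pi}|B|=n$. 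This weight-$n$ homogeneity is the only feature of the cumulants the argument needs.

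Next I would plug in the moment bounds. In the real symmetric case $\beta=1$ we have $H_{ij}\in\bb R$, hence $|\mu_k|=|\bb E H_{ij}^k|\le\bb E|H_{ij}|^k=O_k(N^{-k/2})$ uniformly in $i,j$, so every monomial above satisfies
\[
\Big|\prod_{B\in\pi}\mu_{|B|}\Big|=O_n\Big(\prod_{B\in\pi}N^{-|B|/2}\Big)=O_n\big(N^{-n/2}\big)
\]
by isobaricity. Summing over the $|\cal P(n)|$ partitions and absorbing the coefficients into a constant depending only on $n$ gives $\cal C_n(H_{ij})=O_n(N^{-n/2})$, uniformly in $i,j$.

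In the complex Hermitian case $\beta=2$ the off-diagonal entries are complex, so one applies the identical reasoning to the real and imaginary parts of $H_{ij}$ and to their joint cumulants: the relevant joint cumulant is again, by Möbius inversion, an isobaric combination of total degree $n$ of mixed moments $\bb E[(\re H_{ij})^a(\im H_{ij})^b]$, and each such mixed moment of degree $a+b$ is bounded by $\bb E|H_{ij}|^{a+b}=O_{a+b}(N^{-(a+b)/2})$. The whole argument is elementary bookkeeping, so there is no genuine obstacle; the only two points deserving a sentence of care are that the implied constant depends on $n$ only (which holds because both the number of partitions and the coefficients in the moment--cumulant formula do) and the routine reduction of the $\beta=2$ case to real and imaginary components.
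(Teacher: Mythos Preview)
Your proof is correct and is exactly the argument the paper has in mind: the paper's own proof is the single remark ``whose proof follows by the homogeneity of the cumulants,'' and you have simply spelled out what this homogeneity means via the moment--cumulant (M\"obius) formula and the isobaric weight-$n$ structure. Your handling of the complex case and of the $n$-dependence of the constants is more careful than the paper bothers to be, but there is no difference in approach.
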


We conclude this subsection with a standard complex analysis result from \cite{Davies}. 

\begin{lem}[Helffer-Sj\"{o}strand formula] \label{HS}
	Let $q \in {C}^{2}(\bb R)$, and let $\tilde{q}$ be the almost analytic extension of $q$ defined by
	\begin{equation} \label{eyaya}
	\tilde{q}(x+\ii y)\deq q(x) +\ii yq^{\prime}(x)\,.
	\end{equation}
	Let $\chi \in C^{\infty}_c(\R)$ be an arbitrary cutoff function satisfying $\chi(0) = 1$, and by a slight abuse of notation write $\chi(z) \equiv \chi (\im z)$.
	Then for any $\lambda \in \bb R$ we have
	\begin{equation*} 
	q(\lambda)=\frac{1}{\pi}\int_{\bb C}\frac{\partial_{\bar{z}}(\tilde{q}(z)\chi(z))}{\lambda-z}\,\dd^2z\,,
	\end{equation*}
	where $\partial_{\bar{z}}\deq \frac{1}{2}(\partial_x+\mathrm{i}\partial_y)$ is the antiholomorphic derivative and $\dd^2 z$ the Lebesgue measure on $\C$.
\end{lem}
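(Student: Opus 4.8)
The plan is to reduce the identity to the single classical fact that the Cauchy kernel is a fundamental solution of the Cauchy--Riemann operator, i.e.\ $\partial_{\bar z}\tfrac{1}{z}=\pi\delta_0$ in the sense of distributions on $\C$. Set $F(z)\deq\tilde q(z)\chi(z)$. Since $q\in C^2(\R)$, the almost analytic extension $\tilde q(x+\ii y)=q(x)+\ii y q'(x)$ is $C^1$ on $\C$, and multiplying by $\chi\in C_c^\infty(\R)$ (read as $\chi(\im z)$) makes $F$ a compactly supported function in $C^1(\C)$. For real $\lambda$ one has $\tilde q(\lambda)=q(\lambda)$ (the imaginary part vanishes at $y=0$) and $\chi(\lambda)=\chi(\im\lambda)=\chi(0)=1$, so $F(\lambda)=q(\lambda)$. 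Thus it suffices to prove that
\begin{equation*}
F(\lambda)=\frac1\pi\int_{\C}\frac{\partial_{\bar z}F(z)}{\lambda-z}\,\dd^2z
\end{equation*}
for an arbitrary compactly supported $F\in C^1(\C)$ and arbitrary $\lambda\in\R$; note that the integral on the right converges absolutely because $(\lambda-z)^{-1}$ is locally integrable on $\C$ while $\partial_{\bar z}F$ is bounded with compact support.

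I would prove this representation by excising the singularity. Fix $\lambda$ and $\delta>0$, let $B_R$ be a large open disk containing $\supp F$, and set $\Omega_\delta\deq B_R\setminus\overline{B_\delta(\lambda)}$. On $\overline{\Omega_\delta}$ the function $z\mapsto(\lambda-z)^{-1}$ is smooth, hence $\partial_{\bar z}\!\big(F(z)/(\lambda-z)\big)=\partial_{\bar z}F(z)/(\lambda-z)$ there, and the complex Green/Stokes identity $\int_{\Omega_\delta}\partial_{\bar z}g\,\dd^2z=\tfrac{1}{2\ii}\oint_{\partial\Omega_\delta}g\,\dd z$ applied to $g=F/(\lambda-z)$ yields
\begin{equation*}
\int_{\Omega_\delta}\frac{\partial_{\bar z}F(z)}{\lambda-z}\,\dd^2z=\frac{1}{2\ii}\oint_{\partial\Omega_\delta}\frac{F(z)}{\lambda-z}\,\dd z\,.
\end{equation*}
The piece of $\partial\Omega_\delta$ lying on $\partial B_R$ contributes nothing since $F$ vanishes there, so only the circle $|z-\lambda|=\delta$ survives; parametrizing it by $z=\lambda+\delta\e{\ii\theta}$ with the orientation it inherits from $\partial\Omega_\delta$ gives $\oint_{\partial\Omega_\delta}F(z)/(\lambda-z)\,\dd z=\ii\int_0^{2\pi}F(\lambda+\delta\e{\ii\theta})\,\dd\theta$, which tends to $2\pi\ii\,F(\lambda)$ as $\delta\downarrow0$. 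Hence the right side of the last display tends to $\pi F(\lambda)$, while the left side tends to $\int_{\C}\partial_{\bar z}F(z)/(\lambda-z)\,\dd^2z$ by dominated convergence. Equating the limits gives $F(\lambda)=\tfrac1\pi\int_{\C}\partial_{\bar z}F(z)/(\lambda-z)\,\dd^2z$, which is the claim with $F=\tilde q\chi$.

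The only point needing real care --- the ``main obstacle'', such as it is for this classical lemma --- is the small-circle boundary term: one must check that the $(\lambda-z)^{-1}$ singularity is exactly absorbed by the $\dd z$ in the contour integral, so that no derivative of $F$ survives in the limit, and that replacing $F(z)$ by $F(\lambda)$ on $|z-\lambda|=\delta$ costs only $O(\delta)$, which follows from the $C^1$-continuity of $F$. The remaining ingredients --- the $C^1$ regularity of $\tilde q\chi$, the local integrability of the Cauchy kernel, and the vanishing of the contribution on $\partial B_R$ by compact support --- are routine. One could equally bypass the excision computation by quoting the Cauchy--Pompeiu formula for $C^1$ functions on $B_R$, whose boundary integral over $\partial B_R$ drops out because $F\equiv0$ there; the argument above is essentially a self-contained derivation of that formula at the point $w=\lambda$.
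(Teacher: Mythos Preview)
The paper does not prove this lemma; it simply cites Davies' functional-calculus reference. Your Cauchy--Pompeiu argument is the standard route and the excision computation is carried out correctly, but there is a genuine gap at the very first step: the function $F=\tilde q\,\chi$ is \emph{not} compactly supported in general. The cutoff $\chi(z)=\chi(\im z)$ truncates only in the $y$-direction, so $\supp F$ lies in an infinite horizontal strip, not in a compact set. Consequently your claim that the contribution on $\partial B_R$ vanishes ``since $F$ vanishes there'' is unjustified, and indeed for a generic $q\in C^2(\R)$ (take $q(x)=x^2$) the integral in the lemma does not even converge absolutely.

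What is missing is a decay or support hypothesis on $q$. If $q\in C_c^2(\R)$, then $\tilde q$ has compact $x$-support and your argument goes through verbatim. If instead $q,q',q''$ decay at infinity --- as in the paper's only application, where $q(x)=\big((x+\ii)/(x^2+1)\big)^2$ --- then the $\partial B_R$ boundary integral is $O(R^{-1})$ and one concludes by sending $R\to\infty$. Either way you must handle the behavior as $x\to\pm\infty$ explicitly; the cutoff $\chi$ does nothing for you there.
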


\subsection{Chebyshev's Polynomial}
Suppose that a  function $f:[-1,1]\to \mathbb{R}$ is square integrable on $[-1,1]$ w.r.t to the weight function $1/\sqrt{1-x^2}$. 
In the sequel, we will consider the Fourier-Chebyshev expansion of  $f$ which admits
\begin{align}
f(x)=\frac{1}{2}c_0+ \sum_{k=1}^\infty c_kT_k(x)=:{\sum'}_{k} c_k T_k(x), \qquad a.e.\label{190830100}
\end{align}
where we used the notation $\sum'_k$ to denote the sum from $k=0$ to $\infty$ with the first summand ($k=0$) halved and $T_k$'s are the Chebyshev polynomials of the first kind, i.e.,
\begin{align}
T_k(\cos\theta)=\cos(k\theta). \label{20070310}
\end{align}
 Here the coefficients $c_k$'s are defined as
\begin{align}
c_k= \frac{2}{\pi}\int_{-1}^1 f(s) T_k(s) \frac{1}{\sqrt{1-s^2}}{\rm d} s. \label{def of coefficient}
\end{align}
By setting $g(\theta)=f(\cos\theta)$ for $0\leq \theta\leq \pi$ and requiring $g(\theta+2\pi)=g(\theta)$ together with $g(-\theta)=\theta$, 
one gets an even periodic function $g$ of period $2\pi$. Then the Fourier-Chebyshev expansion of $f(x)$ in (\ref{190830100}) is equivalent to the Fourier expansion of $g(\theta)$ by identifying $x$ with $\cos\theta$. Hence, the theory of Fourier series can be applied to the Fourier-Chebyshev expansion. Especially,  the identity in (\ref{190830100}) holds in the almost everywhere sense by Carleson's theorem.  We refer to  Chapter 5 of the monograph \cite{MH} for a more detailed introduction of the Fourier-Chebyshev expansion.  
In the whole $ \mathbb{C}$, one can also write the Chebyshev polynomials of the first kind as
\begin{align}
T_k(z)= \frac{1}{2} \Big( \big(z-\sqrt{z^2-1}\big)^k+\big(z+\sqrt{z^2-1}\big)^k\Big),\qquad z\in \mathbb{C} \label{19082902}
\end{align} 
where the square root $\sqrt{z^2-1}$ is chosen with a branch cut in the segment $[-1,1]$ so that $\sqrt{z^2-1}\sim z$
as $z\to \infty$. The above representation is easy to check by setting $z=\cos \theta=(\mathrm{e}^{\ii\theta}+\mathrm{e}^{-\ii\theta})/2$ when $z\in[-1,1]$, and as a polynomial the extension of the representation to all $z\in \mathbb{C}$ is obvious. Note from (\ref{19101520}) that 
\begin{align*}
T_k(z)=\frac{(-1)^k}{2}\Big(\Big(\frac{m(z)}{2}\Big)^{k}+\Big(\frac{m(z)}{2}\Big)^{-k}\Big)\,.
\end{align*}
Let us denote
\begin{align}
t_k(A):= \frac{1}{N}\text{Tr} T_k(A)- \int T_k(x)\rho_{sc}(x) {\rm d} x \label{19052201}
\end{align}
for a matrix $A\in \mathbb{C}^{N\times N}$.  We shall use the following results from \cite{BY}. 

\begin{thm}[Corollary 6.1 of \cite{BY}] \label{thm.by}For any fixed $k\in \mathbb{N}$, the random vector $(Nt_1(H), \ldots, Nt_k(H))$ converges weakly to Gaussian vector $(g_1, \ldots, g_k)$ with independent components and the means and variances are  given by 
	\begin{align*}
	&\mathbb{E}g_k=\frac{2-\beta}{4}(1+(-1)^{k})+\frac{1}{2} (\sigma^2+\beta-3)\delta_{k 2}+8c_4 \delta_{k 4}, \nonumber\\
	& \mathrm{Var}(g_k)= \frac14 \big((3-\beta) k+(\sigma^2+\beta-3)\delta_{k 1}+32c_4 \delta_{k 2}\big),
	\end{align*}
	where the parameters $\beta$, $\sigma^2$ and $c_4$ are defined in Section \ref{s.background}. 
\end{thm}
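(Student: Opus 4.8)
\textbf{Proof proposal for Theorem \ref{thm.by}.}

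The plan is to establish joint asymptotic normality of the vector $\big(Nt_1(H),\dots,Nt_k(H)\big)$ for fixed $k$ by the method of moments, reducing all computations to the already-available inputs: the isotropic local semicircle law (Theorem \ref{thm. local semicircle law}) and the cumulant expansion formula (Lemma \ref{lem:cumulant_expansion}) together with the cumulant bounds of Lemma \ref{Tlemh}. Since $k$ is fixed, $T_j$ for $j\le k$ are polynomials of bounded degree, so $\tr T_j(H)$ is a polynomial in the $\tr H^m$ and the whole problem is a finite linear-statistics CLT. First I would reexpress each $T_j(H)$ through the Green function via the Helffer--Sj\"ostrand formula of Lemma \ref{HS}: writing $t_j(H)=\tfrac1N\tr T_j(H)-\int T_j\rho_{sc}$, the centered quantity $N\langle t_j(H)\rangle$ can be represented as a contour-type integral of $N\,\langle\ul G(z)\rangle$ against the almost analytic extension of $T_j$ on a small cutoff neighbourhood of $[-1,1]$. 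The advantage is that all higher moments and cumulants of linear statistics are then controlled through moments and cumulants of $\ul G(z)$, which are the objects one knows how to handle with the cumulant expansion.

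The core of the argument is the computation of the limiting mean and covariance. For the \emph{mean}, I would run the cumulant expansion on $\bb E\big[H_{ij}G_{ji}(z)\big]$-type sums coming from the identity $z\ul G=-1+\tfrac1N\sum H_{ij}G_{ji}$ (the self-consistent equation), keeping terms up to those involving $\cal C_2$, $\cal C_3$, $\cal C_4$ of the entries; by Lemma \ref{Tlemh} and the local law all higher-order terms are $o(1)$ after multiplication by $N$. This yields an expansion $\bb E\,\ul G(z)=m(z)+\tfrac1N m_1(z)+O_\prec(N^{-3/2}\cdots)$ with an explicit $m_1$ depending on $\sigma^2$, $m_4$ (hence $c_4$) and $\beta$; integrating $m_1$ against $T_j$ along the contour produces $\bb E g_j$, and the Dirac-type contributions $\delta_{j2}$, $\delta_{j4}$ arise precisely because $T_2,T_4$ pair nontrivially with the low-degree polynomial pieces of $m_1$, while the $\tfrac{2-\beta}{4}(1+(-1)^j)$ term comes from the leading $\tfrac1N$ correction that is symmetry-class dependent. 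For the \emph{covariance}, I would compute $N^2\,\cov\big(\ul G(z_1),\ul G(z_2)\big)$ by a second cumulant expansion (differentiating the self-consistent equation, or expanding $\bb E[\langle\ul G(z_1)\rangle\langle\ul G(z_2)\rangle]$ directly); the leading term is the universal two-point function $V(z_1,z_2)$ whose double contour integral against $T_j(x_1)T_j'(x_2)$ reproduces $\tfrac{3-\beta}{4}\,j$ (the $(3-\beta)$ replacing the GUE normalization $\tfrac12$ because of the Gaussian divisible structure, and the factor $j$ from $\oint T_j(e^{i\theta})\overline{T_j(e^{i\theta})}$-type orthogonality), while the $\cal C_3,\cal C_4$ corrections give the $\delta_{j1}$ and $\delta_{j2}$ terms. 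The diagonal structure of the limiting covariance matrix — i.e.\ that $g_1,\dots,g_k$ are \emph{independent} — is exactly the statement that the Chebyshev polynomials diagonalize this covariance kernel, which one verifies by the elementary identity $\tfrac{1}{\pi}\int_{-1}^1 T_j(x)T_\ell(x)(1-x^2)^{-1/2}\,dx=\tfrac12\delta_{j\ell}$ (for $j,\ell\ge1$), transported to the contour picture.

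Higher moments: to upgrade from the computation of first and second moments to a genuine CLT, I would show that for the centered vector, all joint cumulants of order $\ge 3$ of $\big(N\langle t_1(H)\rangle,\dots,N\langle t_k(H)\rangle\big)$ tend to $0$. This is again a cumulant-expansion estimate: an $r$-fold product of linear statistics, each written via $\ul G$, has its $r$th joint cumulant bounded by $O_\prec(N^{2-r})$ using the local law to control every Green-function factor by $O_\prec(1)$ and the entry-cumulant bounds of Lemma \ref{Tlemh} to gain the powers of $N^{-1/2}$; hence for $r\ge 3$ it vanishes. Wick's theorem then gives joint Gaussianity. The \textbf{main obstacle} I anticipate is bookkeeping the $O(1)$ (in $N$) corrections to the mean and covariance with the right constants: these require tracking the $\cal C_3$ and $\cal C_4$ contributions through the self-consistent equation carefully, handling the behaviour of the contour integrals near the edges $\pm1$ where $m(z)$ is only H\"older, and verifying that the subleading edge contributions are genuinely $o(1)$ and do not corrupt the constants $\tfrac{2-\beta}{4}$, $\tfrac12(\sigma^2+\beta-3)$, $8c_4$, $32c_4$. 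Since this is \emph{Corollary 6.1 of \cite{BY}}, I would in practice cite that reference; the sketch above is the route one follows to reprove it from the tools collected in this preliminary section.
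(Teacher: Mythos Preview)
The paper does not prove this theorem at all: it is stated as Corollary 6.1 of \cite{BY} and used as a black box in the proof of Theorem \ref{thm. main result}. You correctly recognise this at the end of your proposal and say you would simply cite \cite{BY} in practice; that is exactly what the paper does.

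Your accompanying sketch is a genuinely different route from Bai--Yao's original argument. Their proof in \cite{BY} goes through the Stieltjes transform and a martingale CLT in the Bai--Silverstein tradition, whereas you outline a cumulant-expansion proof built on the isotropic local law and Lemma \ref{lem:cumulant_expansion}, in the spirit of \cite{HK16}. Both approaches work for fixed $k$; yours has the advantage of being self-contained given the tools already assembled in Section \ref{s. preliminary}, and indeed the paper essentially carries out a refined version of your covariance computation in Proposition \ref{pro.19052401} (for \emph{$N$-dependent} $k$, which is where the real work lies). One small imprecision in your sketch: the $(\sigma^2+\beta-3)\delta_{k1}$ correction to the variance does not come from third cumulants $\cal C_3$ but from the fact that the diagonal entries have variance $\sigma^2/(4N)$ rather than $1/(4N)$; for $k=1$ one has $N t_1(H)=\tr H=\sum_i H_{ii}$, whose variance is $\sigma^2/4$, matching the formula. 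Third cumulants do not appear in the limiting mean or variance at all.
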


Finally, with the eigendecomposition $H=\sum_{i} \lambda_i \b u_i \b u_i^*$, we set 
\[
\bar{H} \deq \sum_{i} \bar{ \lambda}_i  \b u_i \b u_i^* \,
\]
Hereafter  we set the notation 
\begin{align}
\bar{x}:=-1\vee x\wedge 1 \label{20070350}
\end{align}
for any  $x\in \mathbb{R}$. 
We have the following comparison result.

\begin{lem} \label{cor. with bar} 
For $k \ll N^{1/3}$, we have 
	\[
	t_k(H)-t_k(\bar{H}) \prec k^2N^{-5/3}\,.
	\]
\end{lem}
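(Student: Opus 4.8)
The plan is to start from the elementary identity
\[
t_k(H)-t_k(\bar H)=\frac1N\sum_{i=1}^N\big(T_k(\lambda_i)-T_k(\bar\lambda_i)\big)=\frac1N\sum_{i:\,|\lambda_i|>1}\big(T_k(\lambda_i)-T_k(\bar\lambda_i)\big),
\]
which holds because $\bar\lambda_i=\lambda_i$ as soon as $\lambda_i\in[-1,1]$. Thus only the eigenvalues that protrude past the spectral edges $\pm1$ contribute, and I would control this sum by combining three inputs: a bound on the number of such indices, a bound on $\big||\lambda_i|-1\big|$ for each of them, and a bound on the increment $|T_k(\lambda_i)-T_k(\pm1)|$.

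For the first two inputs I would invoke the rigidity estimates in the Remark following Theorem~\ref{thm. local semicircle law}. Applying \eqref{19052221} to $\mathcal I=[1,\infty)$ and to $\mathcal I=(-\infty,-1]$, and using $\int_{|x|\ge1}\rho_{sc}(x)\,{\rm d}x=0$, shows that the number of indices $i$ with $|\lambda_i|>1$ is $\prec1$. For each such $i$, since every quantile $\mu_i$ lies in the open interval $(-1,1)$, we get $\big||\lambda_i|-1\big|\le|\lambda_i-\mu_i|\prec N^{-2/3}$ from \eqref{19052220}.

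For the increment I would use the explicit representation \eqref{19082902} of $T_k$ off $[-1,1]$. For an index with $\lambda_i>1$, write $\lambda_i=\cosh t_i$ with $t_i>0$; with the branch of $\sqrt{z^2-1}$ fixed after \eqref{19082902} one has $\lambda_i\pm\sqrt{\lambda_i^2-1}=\e{\pm t_i}$, hence $T_k(\lambda_i)=\cosh(kt_i)$, whereas $T_k(\bar\lambda_i)=T_k(1)=1$. Since $t_i^2\le 2(\cosh t_i-1)=2(\lambda_i-1)\prec N^{-2/3}$ and $k\ll N^{1/3}$, we have $kt_i\ll1$, so that $|T_k(\lambda_i)-T_k(\bar\lambda_i)|=\cosh(kt_i)-1\le(kt_i)^2\le 2k^2(\lambda_i-1)$; the case $\lambda_i<-1$ is identical up to an overall sign $(-1)^k$. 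Summing over the $\prec1$ relevant indices and inserting the edge bound from the previous paragraph yields
\[
\big|t_k(H)-t_k(\bar H)\big|\le\frac{Ck^2}{N}\sum_{i:\,|\lambda_i|>1}\big(|\lambda_i|-1\big)\prec\frac{k^2}{N}\cdot N^{-2/3}=k^2N^{-5/3}\,,
\]
uniformly in $k\ll N^{1/3}$, which is the claim.

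I do not anticipate a genuine obstacle here. The only delicate point is that $T_k$ grows exponentially outside $[-1,1]$, so one must ensure that the overshoot $t_i\asymp\sqrt{|\lambda_i|-1}\asymp N^{-1/3}$ of an edge eigenvalue stays negligible after multiplication by the degree $k$; this is exactly where the hypothesis $k\ll N^{1/3}$ enters, since it forces $kt_i\ll1$ and thereby validates the quadratic estimate $\cosh(kt_i)-1\asymp(kt_i)^2$.
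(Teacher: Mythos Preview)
Your argument is correct and follows essentially the same route as the paper's own proof: both start from the identity restricting the sum to indices with $|\lambda_i|>1$, use rigidity to bound the number of such indices by $\prec 1$ and the overshoot by $\prec N^{-2/3}$, and then control $|T_k(\lambda_i)-T_k(\pm1)|$ by $O(k^2)\cdot\big||\lambda_i|-1\big|$. The only difference is cosmetic: the paper states the increment bound tersely, whereas you justify it explicitly via the identity $T_k(\cosh t)=\cosh(kt)$ and the hypothesis $k\ll N^{1/3}$ to ensure $kt_i\ll 1$.
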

\begin{proof} 
	By definition in (\ref{19052201}), we have 
	\begin{align*}
	N(t_k(H)-t_k(\bar{H}))&= \sum_{i=1}^N \Big(T_k(\lambda_i)-T_k(\bar{\lambda}_i)\Big)=  \sum_{i=1}^N \Big(T_k(\lambda_i)-T_k(\bar{\lambda}_i)\Big)\mathbf{1} (|\lambda_i|>1)\nonumber\\
	&\prec k^2\max_i|\lambda_i-1| \big|\big\{ i: |\lambda_i|>1\big\}\big|\prec k^2N^{-\frac23} \,. 
	\end{align*}
	where in the last two steps we used (\ref{19052220}) and its consequence
	(\ref{19052221}). 
\end{proof}

\section{Proof of Theorem \ref{thm. main result}} \label{sec 3}
For the rest of this paper we set
\begin{align*}
\zeta\deq (1/3-\alpha) \wedge \alpha>0\,.
\end{align*}
Let $f$ be a function satisfying (\ref{190830100}). Recall the definition of $f_{\omega}$ in (\ref{19082801}). We have 
\begin{align}
f_{\omega}(\cos \theta) &=   \frac{1}{2\pi}\int_{0}^\pi \Big(\frac{1-r_\omega^2}{1-2r_\omega\cos(\theta-t)+r_\omega^2}+\frac{1-r_\omega^2}{1-2r_\omega\cos(\theta+t)+r_\omega^2}\Big)  f(\cos t) {\rm d} t\nonumber\\
&=\frac{1}{\pi}\int_{0}^\pi \Big(1+\sum_{k=1}^{\infty}r_\omega^{k} \big(\cos(k (\theta-t))+\cos(k (\theta+t))\big)\Big)f(\cos t) {\rm d} t\nonumber\\
&= \frac{1}{\pi} \int_{0}^\pi \Big(1+2\sum_{k=1}^{\infty}r_\omega^{k} \cos(k \theta)\cos(k t)\Big)f(\cos t) {\rm d} t\nonumber\\
&=\frac{1}{2}c_0+\sum_{k=1}^\infty c_k r_{\omega}^k \cos (k\theta), \label{20070401}
\end{align}
 where in the last step we used (\ref{def of coefficient}), and in the second step we used the following elementary identity for Poisson kernel
\begin{align*}
\frac{1-r^2}{1-2r\cos\phi+r^2}=1+2\sum_{k=1}^\infty r^k \cos (k\phi), \qquad 0\leq r<1. 
\end{align*} 
Also, in the last step of (\ref{20070401}), we interchanged the sum over $k$ with the integral over $t$, which can be justified by the uniform convergence of the series. 
Hence, for any $x\in [-1,1]$, we have 
\begin{align*}
f_\omega(x)=\frac12 c_0+\sum_{k=1}^\infty c_k r_{\omega}^k T_k(x).  
\end{align*}
Recall the notation in (\ref{20070350}). Set
\begin{align*}
f_t(\bar{x}):= \mathbf{1}(\bar{x}\leq t).
\end{align*} 
Observe that $f_t(\bar{x})=f_t(x)$ for any $t\in (-1,1)$. Note that $\chi_{\omega}^t(\bar{x})=(f_t)_{\omega}(\bar{x})$, see (\ref{190830105}) for the definition of $\chi_{\omega}^t(x)$.  It is then easy to compute 
\begin{align}
&\chi_{\omega}^t(\bar{x})= \frac12 d_0^t+\sum_{k=1}^\infty d_k^t r_{\omega}^k T_k(\bar{x}), \nonumber\\
& d_k^t:= \frac{2}{\pi}\int_{-1}^t T_k(s) \frac{1}{\sqrt{1-s^2}}{\rm d} s= -\frac{2}{\pi k} \sin (k\cos^{-1} t).   \label{19052101}
\end{align}

According to the definitions in (\ref{19052101}) and (\ref{190830110}), we can write 
\begin{align*}
F_{N,\omega}(t)= \frac12 d_0^t+\frac{1}{N}\sum_{k=1}^\infty d_k^t r_{\omega}^k \text{Tr} T_k(\bar{H}), \qquad F_\omega(t)= \frac12 d_0^t+\sum_{k=1}^\infty d_k^t r_{\omega}^k \int T_k(x)\rho_{sc}(x) {\rm d} x.
\end{align*}
Therefore, from Definition \ref{defn.19083001}, we have 
\begin{align*}
\mathcal{A}_{N,\omega}= \int \big(F_{N,\omega}(t)- F_\omega(t)\big)^2 {\rm d} F(t)= \sum_{j,k=1}^\infty r_{\omega}^{j+k}t_j(\bar{H})t_k(\bar{H}) \int d_j^t d_k^t {\rm d} F(t),
\end{align*}
where we used the notation in (\ref{19052201}).  

In light of the definition of $d_k^t$ in (\ref{19052101}), we have 
\begin{align*}
\int d_j^t d_k^t {\rm d} F(t)&= \frac{8}{\pi^3 jk} \int_{-1}^1 \sin (j\cos^{-1} t)\sin (k\cos^{-1} t)\sqrt{1-t^2}{\rm d} t\nonumber\\
&=  \frac{8}{\pi^3 jk} \int_{0}^{\pi} \sin (j\theta)\sin (k\theta)(\sin \theta)^2{\rm d} \theta\nonumber\\
&=\frac{2}{\pi^2 jk}\Big(\mathbf{1}(j=k)-\frac12\mathbf{1}(j=k\pm 2)+\frac12 \mathbf{1}(j+k=2)\Big).
\end{align*}
Therefore, we have 
\begin{align}
\mathcal{A}_{N,\omega} 
&= \frac{2}{\pi^2}\sum_{k=1}^\infty \frac{1}{k^2} r_{\omega}^{2k} \big(t_k(\bar{H})\big)^2- \frac{2}{\pi^2}\sum_{k=1}^\infty r_{\omega}^{2k+2}\frac{1}{k(k+2)}t_k(\bar{H})t_{k+2}(\bar{H})+\frac{1}{\pi^2} r_{\omega}^{2}\big(t_1(\bar{H})\big)^2. \label{19052202}
\end{align}
According to the definition in (\ref{19052201}), we have $|t_j(\bar{H})|\leq 2$. Further, recall $r_\omega=1-\omega$ and $\omega=N^{-\alpha}$. We can trivially truncate the sum in (\ref{19052202}) to 
\begin{align} 
\mathcal{A}_{N,\omega}= &\frac{2}{\pi^2}\sum_{k=1}^{n_\omega} \frac{1}{k^2} r_{\omega}^{2k} \big(t_k(\bar{H})\big)^2- \frac{2}{\pi^2}\sum_{k=1}^{n_\omega} r_{\omega}^{2k+2}\frac{1}{k(k+2)}t_k(\bar{H})t_{k+2}(\bar{H})+\frac{1}{\pi^2} r_{\omega}^{2}\big(t_1(\bar{H})\big)^2\black+o(N^{-K}) \label{19052210}
\end{align}
for any large constant $K$ when $N$ is sufficiently large. 
Here 
\begin{align*}
n_\omega:= \lfloor \omega^{-1}(\log N)^2 \rfloor. 
\end{align*}
Now for $k\in \llbracket 1, n_\omega\rrbracket$  and  fixed  $a>0$, we define 
\begin{equation*}
\gamma^{(1)}_{a,k}\deq \{E+\ii \eta \in \bb C:E \in (-1,1), \eta=\pm ak^{-1}\sqrt{1-E^2+k^{-2}}\}\,,
\end{equation*}
and 
\begin{equation*}
\gamma^{(2)}_{a,k}\deq \{E+\ii \eta \in \bb C:|E|\geq 1, \dist (\gamma_{a,k}^{(2)},\{-1,1\})=ak^{-2}\}\,.
\end{equation*}
We then consider  the (counterclockwise) contour
\begin{equation*}
\gamma_{a,k}\deq \gamma^{(1)}_{a,k}\cup \gamma^{(2)}_{a,k}
\end{equation*} 
whose figure is sketched below

\begin{center} 
	\begin{tikzpicture}
	[>=triangle 45, scale=2.5]
	\draw[thin,-Latex] (-1.4,0)--(1.4,0) node[below] {$x$}; 
	\draw[thin,-Latex] (0,-0.6)--(0,0.6) node[left] {$y$}; 
	\coordinate  (A) at (-1,0.1);
	\coordinate  (B) at (0,0.3);
	\coordinate (C) at (1,0.1);
	
	\draw[thin,-Latex] plot [smooth] coordinates { (C) (B) (A) };
	
	\coordinate  (D) at (-1,-0.1);
	\coordinate  (E) at (0,-0.3);
	\coordinate (F) at (1,-0.1);
	
	\draw[thin] plot [smooth] coordinates { (D) (E) (F) };
	
	\draw[thin] plot [smooth] coordinates { (A) (B) (C) };

	\draw (1,-0.1) arc (-90:90:0.1);
	\draw (-1,0.1) arc (90:270:0.1);

	\coordinate [label=below:$-1$] (X) at (-1,0);
	\coordinate [label=below:$1$] (Y) at (1,0);
	\node[point] at (X) {};
	\node[point] at (Y) {};
	\end{tikzpicture}
\end{center}

 We further define $\widehat{\gamma}_{a,k}$, which is obtained from $\gamma_{a,k}$ by deleting the part $|\eta|< N^{-10}$, i.e.,
\begin{equation}  \label{3.4}
\widehat{\gamma}_{a,k} \deq \{E+\ii\eta \in \gamma_{a,k}: |\eta|\geq N^{-10}\}\,.
\end{equation} 
The induced path-integral on $\widehat{\gamma}_{a,k} $ is defined by
\[
\oint_{\widehat{\gamma}_{a,k}} f(z) \dd z\deq \oint_{{\gamma}_{a,k}}  {\bf 1}_{\widehat{\gamma}_{a,k}}(z) f(z) \dd z\,,
\]
where ${\bf 1}_{\widehat{\gamma}_{a,k}}(z)=1$ if $z \in \widehat{\gamma}_{a,k}$ and ${\bf 1}_{\widehat{\gamma}_{a,k}}(z)=0$ otherwise. Accordingly, we set 
\begin{align}
\widehat{t}_{k}(H):=\frac{\ii}{2\pi} \oint_{\widehat{\gamma}_{a,k}} T_k(z) \big(\ul{G}(z)-m(z)\big){\rm d }z=: \frac{\ii}{2\pi} \oint_{\widehat{\gamma}_{a,k}} T_k(z) m_N^\Delta(z){\rm d }z\,. \label{def. t hat}
\end{align}

By (\ref{19052220}) and Cauchy's integral formula, with high probability, we have 
\begin{equation} \label{4.1}
\begin{aligned}
t_{k}(H)=&\,\frac{\ii}{2\pi} \oint_{{\gamma}_{a,k}} T_k(z) \big(\ul{G}(z)-m(z)\big){\rm d }z \\
=&\,\widehat{t}_{k}(H)+\frac{\ii}{2\pi} \oint_{{\gamma}_{a,k}} T_k(z) \big(\ul{G}(z)-m(z)\big)(1-{\bf 1}_{\widehat{\gamma}_{a,k}}(z)){\rm d }z\,.
\end{aligned}
\end{equation}
For $z \in \gamma_{a,k}^{(1)}$ satisfying $1-E^2 \geq |\eta|$, we have
\begin{multline} \label{zuoyi}
z\pm \sqrt{z^2-1}=E+\ii \eta \pm \sqrt{E^2-1} \sqrt{1-\frac{2E\eta}{1-E^2}\ii+\frac{\eta^2}{1-E^2}}\\
=E+\ii \eta \pm \sqrt{E^2-1} \Big(1+O_a\Big(\frac{|\eta|}{1-E^2}\Big)\Big)=E\pm\sqrt{E^2-1}+O_a(k^{-1})\,,
\end{multline}
where in the second step we used $|\eta^2|=O_a(|\eta|)$.  For $z \in \gamma_{a,k}^{(1)}$ satisfying $1-E^2 < |\eta|$ or  $z \in \gamma_{a,k}^{(2)}$, we have $|E^2-1|,|\eta|=O_a(k^{-2})$. Thus
\begin{equation} \label{wubai}
z\pm \sqrt{z^2-1}=E+\ii \eta \pm \sqrt{E^2-1+2E \eta \ii -\eta^2}=1+O_a(k^{-1})\,.
\end{equation}
The estimates  \eqref{zuoyi} and \eqref{wubai}, together with (\ref{19082902}),  imply
\begin{align} \label{eqn: bound T}
\sup_{z\in \gamma_{a,k}} |T_k(z)|=O_a(1)
\end{align}
for all $k\leq n_\omega$.  Furthermore, by Theorem \ref{thm. local semicircle law} and \eqref{4.1} one easily deduces that
\begin{align*}
t_{k}(H) \prec \frac{k}{N}\quad \mbox{and} \quad t_{k}(H)=\widehat{t}_{k}(H)+O_{\prec}(N^{-10}) \,.  
\end{align*} 
Together with Lemma \ref{cor. with bar}, \eqref{19052210}, and $\omega N^{1/3}\geq N^{\zeta}$, we arrive at
\begin{align*}
\mathcal{A}_{N,\omega}= &\frac{2}{\pi^2}\sum_{k=1}^{n_\omega} \frac{1}{k^2} r_{\omega}^{2k} \big(t_k({H})\big)^2- \frac{2}{\pi^2}\sum_{k=1}^{n_\omega} r_{\omega}^{2k+2}\frac{1}{k(k+2)}t_k({H})t_{k+2}({H})+\frac{1}{\pi^2} r_{\omega}^{2}\big(t_1({H})\big)^2\black+O_{\prec}(N^{-2-2\zeta})\nonumber\\
= &\frac{2}{\pi^2}\sum_{k=1}^{n_\omega} \frac{1}{k^2} r_{\omega}^{2k} \big(\widehat{t}_k({H})\big)^2- \frac{2}{\pi^2}\sum_{k=1}^{n_\omega} r_{\omega}^{2k+2}\frac{1}{k(k+2)}\widehat{t}_k({H})\widehat{t}_{k+2}({H})+\frac{1}{\pi^2} r_{\omega}^{2}\big(\widehat{t}_1({H})\big)^2\black+O_{\prec}(N^{-2-2\zeta}).
\end{align*}
 By the trivial bound $|m^{\Delta}(z)|\leq |m(z)|+|\ul{G}(z)|\leq 2|\eta|^{-1}\leq 2N^{10}$, we also have the deterministic bound
\begin{equation*}
|\widehat{t}_k(H)| \leq C N^{10}\,.
\end{equation*}
 For the rest of this paper, we shall only work on the case when $H$ is real and symmetric ($\beta=1$); in the complex Hermitian case ($\beta=2$), one only needs to apply the complex analogue of Lemma \ref{lem:cumulant_expansion} (see e.g.\,\cite[Lemma 7.1]{HK16}) and the proof works in the same way. Therefore, without further explanation, the discussions from the rest of this section till Section \ref{sec 5} will be stated for the real $H$ only.   
 
Now  Theorem \ref{thm. main result} (the real case) follows easily from the following results, whose proofs will be given in Section \ref{sec 4}.

\begin{pro} \label{pro.cov}For any  $k,l\in\llbracket 1, n_\omega\rrbracket$, we have 
	\begin{align}\label{19012420}
	&\mathrm{Cov} \big((\widehat{t}_k({H}))^2, (\widehat{t}_l({H}))^2\big)\nonumber\\
	&=
	4\bigg(\frac{1}{4N}(1+(-1)^k)+\frac{8 c_4 }{N} \delta_{k4}+\frac{\sigma^2-2}{2N}\delta_{k2}\bigg)^2\bigg( \frac{ k}{2N^2}+\frac{8c_4}{N^2}\delta_{k 2}+\frac{\sigma^2-2}{4N^2}\delta_{k1}\bigg)\delta_{kl} \nonumber\\
	&+2\bigg( \frac{ k}{2N^2}+\frac{8c_4}{N^2}\delta_{k 2}+\frac{\sigma^2-2}{4N^2}\delta_{k1}\bigg)^2\delta_{kl}+O_{\prec}(klN^{-4-\zeta})
	\end{align}
	and
	\begin{align*}
	&\mathrm{Cov} \big( \widehat{t}_k({H})\widehat{t}_{k+2}({H}),\widehat{t}_l({H})\widehat{t}_{l+2}({H})\big) \nonumber\\
	&=
	\bigg(\frac{1}{4N}(1+(-1)^{k+2})+\frac{8 c_4 }{N} \delta_{k2}\bigg)^2\bigg( \frac{ k}{2N^2}+\frac{8c_4}{N^2}\delta_{k 2}+\frac{\sigma^2-2}{4N^2}\delta_{k1}\bigg)\delta_{kl} \nonumber\\
	&\quad+\bigg(\frac{1}{4N}(1+(-1)^k)+\frac{8 c_4 }{N} \delta_{k4}+\frac{2(\sigma^2-2)}{N}\delta_{k2}\bigg)^2\frac{k+2}{N^2}\delta_{kl} \nonumber\\
	&\quad +\frac{k+2}{N^2}\bigg( \frac{ k}{2N^2}+\frac{8c_4}{N^2}\delta_{k 2}+\frac{\sigma^2-2}{4N^2}\delta_{k1}\bigg)\delta_{kl}+O_{\prec}(klN^{-4-\zeta})\,,
	\end{align*}
	as well as
	\begin{equation*}
	\mathrm{Cov} \big( \widehat{t}_k({H})\widehat{t}_{k}({H}),\widehat{t}_l({H})\widehat{t}_{l+2}({H})\big) \prec kl N^{-4-\zeta} \,.
	\end{equation*}
\end{pro}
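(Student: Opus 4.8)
The plan is to reduce everything to a single master computation: the covariance structure of the random variables $\widehat{t}_k(H)$ for $k,l \le n_\omega$, computed up to error $O_\prec(klN^{-2-\zeta})$ for the first cumulant and $O_\prec(klN^{-2-\zeta})$-type errors for the second. Concretely, I would first establish the "Gaussian-with-corrections" approximation $N\widehat{t}_k(H) = \mathsf{g}_k + O_\prec(N^{-\zeta})$ where $(\mathsf{g}_k)$ has the mean and covariance dictated by an extension of Theorem \ref{thm.by} to $N$-dependent degree $k \ll N^{1/3}$; this is exactly the content announced in the introduction as the main technical input (the four-point / contour argument giving the Chebyshev orthogonality cancellations). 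Granting this, the key identities I need are: $\bb E \widehat{t}_k(H)$ and $\mathrm{Cov}(\widehat{t}_k(H),\widehat{t}_l(H))$ reduce to $\frac{1}{N^2}$ times the Theorem \ref{thm.by} data (plus the genuinely new off-diagonal term $\mathrm{Cov}(\widehat t_k,\widehat t_{k+2})$, which is $O_\prec(kN^{-2})$ and has an explicit leading coefficient coming from the $\sqrt{(k)(k+2)}$ normalization visible in the statement of Theorem \ref{thm. main result}).

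Next I would feed this into the standard formula for the covariance of products of asymptotically-jointly-Gaussian variables. For mean-zero jointly Gaussian $(X,Y,Z,W)$ one has
\begin{align*}
\mathrm{Cov}(XY,ZW) = \mathrm{Cov}(X,Z)\,\mathrm{Cov}(Y,W) + \mathrm{Cov}(X,W)\,\mathrm{Cov}(Y,Z) + \bb E X\,\bb E W\,\mathrm{Cov}(Y,Z) + (\text{all such terms}),
\end{align*}
so with $X=Y=\widehat t_k - \bb E\widehat t_k$ shifted appropriately, $\mathrm{Cov}((\widehat t_k)^2,(\widehat t_l)^2)$ splits into a "$2\,\mathrm{Cov}^2$" piece and a "$4(\bb E\widehat t_k)^2\mathrm{Cov}$" piece — precisely the two terms on the right-hand side of \eqref{19012420}. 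The $\delta_{kl}$ in front of every term is forced by the fact (again from the extended Theorem \ref{thm.by}) that $\mathrm{Cov}(\widehat t_k,\widehat t_l)$ is diagonal up to the $l=k\pm2$ correction and up to $O_\prec(klN^{-2-\zeta})$; when $l = k\pm 2$ the product $\widehat t_k\widehat t_{k+2}$ already accounts for it, and when $|k-l|\ge 1$ with $l\ne k\pm2$ the covariance of products is a product of two small covariances, hence $O_\prec(klN^{-4-\zeta})$. The same bookkeeping, applied to $\mathrm{Cov}(\widehat t_k\widehat t_{k+2},\widehat t_l\widehat t_{l+2})$, produces the three displayed terms: $\mathrm{Cov}(\widehat t_k,\widehat t_l)\mathrm{Cov}(\widehat t_{k+2},\widehat t_{l+2})$, $\mathrm{Cov}(\widehat t_k,\widehat t_{l+2})\mathrm{Cov}(\widehat t_{k+2},\widehat t_l)$ — which only survives, with coefficient $\frac{k+2}{N^2}\cdot\text{(variance of }\widehat t_k)$-type, when $l=k$ — and the $\bb E\cdot\bb E\cdot\mathrm{Cov}$ cross terms; matching the explicit $c_4,\sigma^2$ coefficients against Theorem \ref{thm.by} gives the stated formula. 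The last estimate, $\mathrm{Cov}(\widehat t_k^2,\widehat t_l\widehat t_{l+2})\prec klN^{-4-\zeta}$, is the "odd mismatch" case: every Wick pairing pairs a degree-$k$ factor with a degree-$l$ or degree-$(l+2)$ factor, and since $k\notin\{l,l+2\}$ can be arranged off a negligible diagonal, each pairing contributes $O_\prec(klN^{-2-\zeta})$ and the product of (at least) two of them is $O_\prec(klN^{-4-\zeta})$.

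The main obstacle is not the Wick combinatorics above — that is routine once the covariance matrix is known — but rather controlling the error terms uniformly as $k,l$ range up to $n_\omega \asymp \omega^{-1}(\log N)^2$, which is far larger than the $N^{1/3}$ threshold where the Gaussian approximation for individual $\widehat t_k$ is valid. The resolution is that we never need the Gaussian approximation for large $k$ in isolation; we need only the \emph{covariance} bound $\mathrm{Cov}(\widehat t_k,\widehat t_l)=O_\prec(klN^{-2})$ (with the stated leading term for $k\lesssim N^{1/3-\varepsilon}$ and merely the bound for larger $k$), since the prefactors $r_\omega^{2k}/k^2$ etc.\ in \eqref{19052210} decay geometrically and kill the large-$k$ tail. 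So the real work is: (i) the contour-integral representation \eqref{def. t hat} together with the local law of Theorem \ref{thm. local semicircle law} to get the a priori bound $\widehat t_k \prec k/N$; (ii) a cumulant expansion of $\bb E[\widehat t_k \widehat t_l \widehat t_j \widehat t_m]$, tracking which terms are $\delta$-supported on the Chebyshev-orthogonality diagonal and which are genuinely $O_\prec(klN^{-2-\zeta})$ smaller — this is where the degree bound $k\ll N^{1/3}$ and the choice $\zeta = (1/3-\alpha)\wedge\alpha$ enter, to absorb the loss from derivatives of $T_k$ (which are $O(k^2)$ on $\gamma_{a,k}$) against the gains from each additional cumulant. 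I would isolate (ii) as a separate lemma (in the spirit of the announced Proposition \ref{pro.19052401}) and then simply invoke it here.
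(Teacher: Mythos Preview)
Your overall strategy is correct and matches the paper's in substance: the heart of the argument is indeed a Wick-type decomposition of the four-point function, with the key technical input being the cumulant expansion you allude to as Proposition \ref{pro.19052401}. The organizational difference is that the paper carries out the Wick decomposition at the level of the Green functions first --- Proposition \ref{pro.19052401} computes $\mathrm{Cov}\big(m_N^\Delta(z_1)m_N^\Delta(z_2),m_N^\Delta(z_3)m_N^\Delta(z_4)\big)$ explicitly as a sum of six terms $f(z_i,z_j)f(z_k,z_\ell)$ and $g(z_i)g(z_j)f(z_k,z_\ell)$, which is already the Wick structure --- and only then integrates each factor $f$ and $g$ against $T_k$ along the contours $\gamma_{a,k}$ via a change of variable $q=m(z)/2$. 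Your proposal instead integrates first (to get $\bb E\widehat t_k$ and $\mathrm{Cov}(\widehat t_k,\widehat t_l)$) and Wick-pairs afterwards; this is equivalent but slightly less economical, since establishing that the fourth cumulant of the $\widehat t_k$'s is $O_\prec(klN^{-4-\zeta})$ requires exactly the same Green-function four-point estimate, so you would end up proving Proposition \ref{pro.19052401} anyway.

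One correction: there is no leading-order off-diagonal term $\mathrm{Cov}(\widehat t_k,\widehat t_{k+2})$. The contour integral $\oint\oint T_k(z_1)T_l(z_3)f(z_1,z_3)\,\dd z_1\dd z_3$ is proportional to $\delta_{kl}$ (this is the Chebyshev orthogonality visible after the substitution $q_i=m(z_i)/2$), so the covariance matrix of the $\widehat t_k$'s is genuinely diagonal to the relevant precision. The $k\leftrightarrow k+2$ coupling in the final limit law comes entirely from the structure of $\mathcal{A}_{N,\omega}$ itself (the cross term $t_k t_{k+2}$ in \eqref{19052210}), not from an off-diagonal covariance. This also affects your argument for the last estimate $\mathrm{Cov}(\widehat t_k^2,\widehat t_l\widehat t_{l+2})$: with diagonal covariance the $C_{13}C_{24}$ and $C_{14}C_{23}$ pairings both vanish automatically, and the surviving $g\!\cdot\! g\!\cdot\! f$ terms (e.g.\ $2\mu_k\mu_{l+2}\,v_k\delta_{kl}$) are what one must check are absorbed by the stated error.
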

Furthermore, we also need the following proposition on the estimate of the expectations.
\begin{pro}\label{pro.exp}  For any  $k\in\llbracket 1, n_\omega\rrbracket$, we have 
	\begin{align*}
	\mathbb{E} (\widehat{t}_k({H}))^2=&\frac{ k}{2N^2}+\frac{8c_4}{N^2}\delta_{k 2}+\frac{\sigma^2-2}{4N^2}\delta_{k1}+\bigg(\frac{1}{4N}(1+(-1)^k)+\frac{8 c_4 }{N} \delta_{k4}+\frac{\sigma^2-2}{2N}\delta_{k2}\bigg)^2 +O_{\prec}(kN^{-2-\zeta})
	\end{align*}
	and
	\begin{align*}
	\mathbb{E} \widehat{t}_k({H})\widehat{t}_{k+2}({H})=&\bigg(\frac{1}{4N}(1+(-1)^k)+\frac{8 c_4 }{N} \delta_{k4}+\frac{\sigma^2-2}{2N}\delta_{k2}\bigg) \bigg(\frac{1}{4N}(1+(-1)^k)+\frac{8 c_4 }{N} \delta_{k2}\bigg)+O_{\prec} (kN^{-2-\zeta}) \,.
	\end{align*}
\end{pro}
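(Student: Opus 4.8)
The plan is to reduce both estimates to two inputs provided by Proposition \ref{pro.19052401} (which controls the correlation functions of $\ul{G}(z)-m(z)$ on the mesoscopic contours $\widehat{\gamma}_{a,k}$): a first-order expansion of $\mathbb{E}[\ul{G}(z)]$, i.e.\ the subleading term of order $N^{-1}$ in the local semicircle law, and the leading, order-$N^{-2}$ term of the two-point function $\mathbb{E}\big[\la\ul{G}(z)\ra\,\la\ul{G}(w)\ra\big]$, where $\la X\ra = X-\mathbb{E}X$. First I would decompose $\mathbb{E}(\widehat{t}_k(H))^2 = (\mathbb{E}\widehat{t}_k(H))^2 + \mathrm{Var}(\widehat{t}_k(H))$ and $\mathbb{E}\widehat{t}_k(H)\widehat{t}_{k+2}(H) = \mathbb{E}\widehat{t}_k(H)\,\mathbb{E}\widehat{t}_{k+2}(H) + \mathrm{Cov}(\widehat{t}_k(H),\widehat{t}_{k+2}(H))$, and express the ingredients as contour integrals: by \eqref{def. t hat},
\[
\mathbb{E}\widehat{t}_k(H) = \frac{\ii}{2\pi}\oint_{\widehat{\gamma}_{a,k}} T_k(z)\big(\mathbb{E}\ul{G}(z)-m(z)\big)\,\dd z,
\]
and, centering the Green functions,
\[
\mathrm{Cov}(\widehat{t}_k(H),\widehat{t}_l(H)) = -\frac{1}{4\pi^2}\oint_{\widehat{\gamma}_{a,k}}\oint_{\widehat{\gamma}_{a,l}} T_k(z)\,T_l(w)\,\mathbb{E}\big[\la\ul{G}(z)\ra\la\ul{G}(w)\ra\big]\,\dd z\,\dd w .
\]
It then remains to evaluate these integrals (by residues, using the Chebyshev structure) and to control, uniformly in $k,l\le n_\omega$, the errors inherited from Proposition \ref{pro.19052401}.

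For the mean, I would plug in the expansion $\mathbb{E}\ul{G}(z) = m(z) + N^{-1}\Psi(z) + (\mathrm{error})$, where $\Psi$ is explicit and algebraic in $m(z)$ with coefficients depending on $\sigma^2$ and $c_4$, and use $T_k(z) = \tfrac{(-1)^k}{2}\big((m(z)/2)^k + (m(z)/2)^{-k}\big)$ from \eqref{19082902}. The change of variables $z\mapsto m=m(z)$, with inverse $z = -m^{-1}-m/4$, maps $\widehat{\gamma}_{a,k}$ to a contour around the origin just inside $\{|m|=2\}$, so $\mathbb{E}\widehat{t}_k(H)$ becomes a residue at $m=0$ extracting two Taylor (in $m$) coefficients of $\Psi$ near index $k$; the outcome equals $N^{-1}$ times the mean $\mathbb{E}g_k = \tfrac14(1+(-1)^k)+\tfrac12(\sigma^2-2)\delta_{k2}+8c_4\delta_{k4}$ of Theorem \ref{thm.by} (real case $\beta=1$). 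The error is controlled using $\sup_{z\in\gamma_{a,k}}|T_k(z)| = O_a(1)$ from \eqref{eqn: bound T} and the geometry of $\widehat{\gamma}_{a,k}$ (bulk part of length $O_a(1)$ with $\im z\gtrsim k^{-1}$; edge part of length $O_a(k^{-2})$ with $\im z\asymp k^{-2}$), against which the error of Proposition \ref{pro.19052401}, which degrades by a fixed power of $(\im z)^{-1}$ near $\pm1$, is integrated. Squaring gives the $(\mathbb{E}\widehat{t}_k(H))^2$ term of the statement, and, since $k\ge 1$ kills the would-be $\delta_{k0}$ term, $\mathbb{E}\widehat{t}_{k+2}(H) = \tfrac{1}{4N}(1+(-1)^k)+\tfrac{8c_4}{N}\delta_{k2}+O_{\prec}(\cdots)$, which is precisely the second factor in the cross-term formula.

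For the covariances, Proposition \ref{pro.19052401} supplies $\mathbb{E}\big[\la\ul{G}(z)\ra\la\ul{G}(w)\ra\big] = N^{-2}V(z,w)+(\mathrm{error})$, where $V$ is the Wigner CLT covariance kernel (as in \cite{Johansson98,BY}, with its $(z-w)^{-2}$ singularity) together with $\sigma^2$- and $c_4$-dependent corrections supported on low Chebyshev modes. Substituting the Chebyshev representations of $T_k$ and $T_l$ and computing the double residue, the orthogonality $\frac{1}{2\pi\ii}\oint (m/2)^{j}\,\tfrac{\dd m}{m} = \delta_{j0}$ — the analytic manifestation of the fact, noticed in \cite{Johansson98}, that the Chebyshev polynomials diagonalise the covariance structure — collapses the double integral to the diagonal, giving $\mathrm{Cov}(\widehat{t}_k(H),\widehat{t}_l(H)) = N^{-2}\mathrm{Var}(g_k)\,\delta_{kl}+(\text{low-mode terms})+O_{\prec}(klN^{-4-\zeta})$ with $\mathrm{Var}(g_k) = \tfrac{k}{2}+\tfrac{\sigma^2-2}{4}\delta_{k1}+8c_4\delta_{k2}$. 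Taking $l=k$ yields the block $\tfrac{k}{2N^2}+\tfrac{8c_4}{N^2}\delta_{k2}+\tfrac{\sigma^2-2}{4N^2}\delta_{k1}$; taking $l=k+2$ shows $\mathrm{Cov}(\widehat{t}_k(H),\widehat{t}_{k+2}(H)) = O_{\prec}(kN^{-2-\zeta})$ (the diagonal forces $\delta_{k,k+2}=0$), which is absorbed into the error of the cross-term formula. The same kernel $V$ and the same orthogonality are then reused for the four-point functions in Proposition \ref{pro.cov}.

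I expect the main obstacle to be the uniform-in-$k$ error control for $k\le n_\omega = \lfloor\omega^{-1}(\log N)^2\rfloor$. On $\widehat{\gamma}_{a,k}$ the spectral parameter descends to $\im z\asymp k^{-2}$ near $\pm1$, where the corrections to the local law provided by Proposition \ref{pro.19052401} are largest — of size a fixed power of $(N\,\im z)^{-1}\asymp k^2/N$ — and one must verify that, after multiplying by $|T_k|=O_a(1)$ and integrating over the edge portion of length $\asymp k^{-2}$, the resulting bound is $O_{\prec}(kN^{-2-\zeta})$ for the expectations, respectively $O_{\prec}(klN^{-4-\zeta})$ for the covariances; these are precisely the scales at which the errors, once weighted by $k^{-2}$ and summed as in \eqref{19052210}, remain negligible after multiplication by $N^2$. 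This is where the hypothesis $\alpha<\tfrac13$, equivalently $n_\omega\ll N^{1/3}$, and the choice $\zeta = (\tfrac13-\alpha)\wedge\alpha$ are used. A secondary, bookkeeping-type difficulty is tracking the $\sigma^2$- and $c_4$-dependent corrections through the residue computations: they enter $\mathbb{E}\ul{G}$ and $V$ as specific low powers of $m(z)$, and one must check they land exactly on the modes $\delta_{k1}$, $\delta_{k2}$, $\delta_{k4}$ in the statement. For fixed $k$ these computations reduce to those of \cite{Johansson98,BY}; the real content here is making them uniform over the growing range $1\le k\le n_\omega$.
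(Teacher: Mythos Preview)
Your proposal is correct and follows essentially the same approach as the paper. The paper does not give a separate proof of Proposition \ref{pro.exp}; it simply remarks that it follows ``in the same manner'' from Proposition \ref{pro.19052401} as the detailed proof of \eqref{19012420}, and your outline is exactly that: insert \eqref{19052410} into the double contour integral for $\mathbb{E}\widehat{t}_k(H)\widehat{t}_l(H)$, use \eqref{eqn: bound T} and Lemma \ref{lem:5.22} to control the error term, and evaluate the main term by the residue computation in the variable $q=m(z)/2$ carried out in Section \ref{sec 4}. Your mean-squared-plus-variance split is just the decomposition $f(z_1,z_2)+g(z_1)g(z_2)$ already built into \eqref{19052410} (with $g$ from Lemma \ref{lem: expectstion} playing the role of your $N^{-1}\Psi$), so there is no substantive difference from the paper's route.
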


With the discussion above, we can now  prove Theorem \ref{thm. main result}. 
\begin{proof}[Proof of Theorem \ref{thm. main result}]
In the sequel, we state the proof of the real case only. For the complex case, the proof is analogous. 
Fix an even integer $M\geq 4$. Recall the notation $\langle X \rangle\deq X-\bb E X$. We have
\begin{multline} \label{3.13}
N^2\mathcal{A}_{N,\omega} =N^2\bigg(\frac{2}{\pi^2}\sum_{k=1}^M \frac{1}{k^2}  r_{\omega}^{2k}\la\big(\widehat{t}_k({H})\big)^2\ra- \frac{2}{\pi^2}\sum_{k=1}^M \frac{1}{k(k+2)}r_{\omega}^{2k+2}\la \widehat{t}_k({H})\widehat{t}_{k+2}({H})\ra+\frac{1}{\pi^2}r_{\omega}^2 \la\big(\widehat{t}_1({H})\big)^2\ra\bigg)\\
+N^2\bigg(\frac{2}{\pi^2}\sum_{k=M+1}^{n_{\omega}} \frac{1}{k^2} r_{\omega}^{2k} \la\big(\widehat{t}_k({H})\big)^2\ra- \frac{2}{\pi^2}\sum_{k=M+1}^{n_{\omega}}  \frac{1}{k(k+2)}r_{\omega}^{2k+2}\la \widehat{t}_k({H})\widehat{t}_{k+2}({H})\ra\bigg)\\
+\bb E N^2 \bigg(\frac{2}{\pi^2}\sum_{k=1}^{n_\omega} \frac{1}{k^2} r_{\omega}^{2k} \big(\widehat{t}_k({H})\big)^2- \frac{2}{\pi^2}\sum_{k=1}^{n_\omega} r_{\omega}^{2k+2}\frac{1}{k(k+2)}\widehat{t}_k({H})\widehat{t}_{k+2}({H})+\frac{1}{\pi^2}r_{\omega}^{2k}(\widehat{t}_1(H))^2\bigg)\\+O_{\prec}(N^{-2\zeta})
\eqd X^{(1,M)}+X^{(2,M)}+X^{(3)}+O_{\prec}(N^{-2\zeta})\,.
\end{multline}
By Proposition \ref{pro.exp} we have
\begin{multline} \label{eqn: X3}
X^{(3)}=\frac{1}{\pi^2}\sum_{k=1}^{n_\omega}\frac{1}{k}r_{\omega}^{2k}
+\frac{4}{\pi^2}c_4 r_{\omega}^4+\frac{\sigma^2-2}{2\pi^2}r_{\omega}^2
+\frac{1}{2\pi^2}\sum_{n=1}^{\floor{n_{\omega}/2}} \frac{1}{4n^2}r_{\omega}^{4n}
+\frac{8}{\pi^2}c_4^2 r_{\omega}^8 \\
+\frac{(\sigma^2-2)^2}{8\pi^2}r_{\omega}^4
+\frac{1}{\pi^2}c_4 r_{\omega}^8
+\frac{\sigma^2-2}{4\pi^2}r_{\omega}^4
-\frac{1}{2\pi^2}\sum_{n=1}^{\floor{n_{\omega}/2}}\frac{1}{4n(n+1)}r_{\omega}^{4n+2}
-\frac{1}{\pi^2}c_4 r_{\omega}^6
-\frac{1}{3\pi^2}c_4r_{\omega}^{10} \\
-\frac{\sigma^2-2}{16\pi^2}r_{\omega}^6
-\frac{\sigma^2-2}{\pi^2}c_4r_{\omega}^6
+\frac{1}{2\pi^2}r_{\omega}^2+\frac{\sigma^2-2}{4\pi^2}r_{\omega}^2
+O_{\prec}(N^{-\zeta})\\
=-\frac{\log (2\omega)}{\pi^2}+\frac{4}{\pi^2}c_4
+\frac{\sigma^2-2}{2\pi^2}
+\frac{1}{48}
+\frac{8}{\pi^2}c_4^2
+\frac{(\sigma^2-2)^2}{8\pi^2}
+\frac{1}{\pi^2}c_4
+\frac{\sigma^2-2}{4\pi^2}
-\frac{1}{8\pi^2}
-\frac{1}{\pi^2}c_4\\
-\frac{1}{3\pi^2}c_4
-\frac{\sigma^2-2}{16\pi^2}
-\frac{\sigma^2-2}{\pi^2}c_4
+\frac{1}{2\pi^2}+\frac{\sigma^2-2}{4\pi^2} 
+O_{\prec}(N^{-\zeta})
\\=\frac{\alpha \log N}{\pi^2}+b_1+O_{\prec}(N^{-\zeta})\,,
\end{multline}
where $b_1$ is defined in Theorem \ref{thm. main result}. 

Let $(Z_i)_{i \in \bb N^*}$ be independent standard real Gaussian random variables, and by Theorem \ref{thm.by} we see that
\begin{multline*}
X^{(1,M)}\overset{d}{\longrightarrow}
\frac{2}{\pi^2}\sum_{k=1}^{M} \frac{1}{k^2}\Big(\frac{k}{2}+\frac{\sigma^2-2}{4}\delta_{k1}+8c_4\delta_{k2}\Big) (Z_k^2-1)\\
+\frac{4}{\pi^2}\sum_{k=1}^{M}\frac{1}{k^2}\Big(\frac{1}{4}(1+(-1)^k)+\frac{1}{2}(\sigma^2-2)\delta_{k2}+8c_4\delta_{k4}\Big)\Big(\frac{k}{2}+\frac{\sigma^2-2}{4}\delta_{k1}+8c_4\delta_{k2}\Big)^{1/2}Z_k\\
-\frac{2}{\pi^2}\sum_{k=1}^M\frac{1}{k(k+2)}\Big(\frac{k}{2}+\frac{\sigma^2-2}{4}\delta_{k1}+8c_4\delta_{k2}\Big)^{1/2}\Big(\frac{k+2}{2}\Big)^{1/2}Z_kZ_{k+2}\\
-\frac{2}{\pi^2}\sum_{k=1}^M\frac{1}{k(k+2)}\Big(\frac{1}{4}(1+(-1)^k)+\frac{\sigma^2-2}{ 2}\delta_{k2}+8c_4\delta_{k4}\Big)\Big(\frac{k+2}{2}\Big)^{1/2}Z_{k+2}\\
-\frac{2}{\pi^2}\sum_{k=1}^M\frac{1}{k(k+2)}\Big(\frac{1}{4}(1+(-1)^{k+2})+8c_4\delta_{k2}\Big)\Big(\frac{k}{2}+\frac{\sigma^2-2}{4}\delta_{k1}+8c_4\delta_{k2}\Big)^{1/2}Z_k+\frac{1}{\pi^2}\Big(\frac{1}{2}+\frac{\sigma^2-2}{4}\Big)(Z_1^2-1)\\
=\frac{1}{\pi^2}\sum_{k=1}^M\bigg( \frac{1}{ k}(Z_k^2-1)-\frac{1}{\sqrt{k(k+2)}}Z_kZ_{k+2}\bigg)+\frac{1}{\pi^2}\sum_{n=1}^{M/2}\Big(\frac{n+2}{4n^{3/2}(n+1)}Z_{2n}-\frac{1}{4n\sqrt{n+1}}Z_{2n+2}\Big)\\
+\frac{1}{8\pi^2}\big((4\sigma^2-16c_4-5)(1+8c_4)^{1/2}-3\big)Z_2+\frac{2\sqrt{2}}{\pi^2}\Big(c_4 -\frac{\sigma^2-2}{ 16\black}\Big)Z_4-\frac{2}{\sqrt{3}\pi^2}c_4 Z_6\\
+\frac{1}{ \pi^2}\Big(\frac{3(\sigma^2-2)}{4}+\frac{1}{2}\Big)(Z_1^2-1)\black-\frac{1}{2\sqrt{3}\pi^2}\big((2(\sigma^2-2)+4)^{1/2}-2\big)Z_1Z_3\\
+\frac{4}{\pi^2}c_4(Z_2^2-1)-\frac{1}{2\sqrt{2}\pi^2}\big((1+8c_4)^{1/2}-1\big)Z_2Z_4\eqd Y^{(M)}+a_1
\end{multline*} 
as $N \to \infty$. Here we recall the definition of $a_1$ in Theorem \ref{thm. main result}. Let us denote
\[
Y\deq\frac{1}{\pi^2}\sum_{k=1}^\infty\bigg( \frac{1}{ k}(Z_k^2-1)-\frac{1}{\sqrt{k(k+2)}}Z_kZ_{k+2}\bigg)+\frac{1}{\pi^2}\sum_{n=1}^\infty\Big(\frac{n+2}{4n^{3/2}(n+1)}Z_{2n}-\frac{1}{4n\sqrt{n+1}}Z_{2n+2}\Big)\,,
\] and it is easy to see that 
\[
\bb E \big[\big|Y-Y^{(M)}\big|^2\big]=\var(Y-Y^{(M)} )\leq CM^{-1}
\]
for some constant $C>0$ independent of $N,M$. By Proposition \ref{pro.cov} we have
\[
\bb E \big[\big|X^{(2,M)}\big|^2\big]=\var(X^{(2,M)})\leq CM^{-1}\,.
\]
Thus for any fixed $t>0$,
\begin{align*}
&\big|\lim _{N \to \infty}\bb E (\exp (\ii t (X^{(1,M)}+X^{(2,M)})))-\bb E (\exp(\ii t (Y+a_1)))\big|\\
&\leq \lim_{N \to \infty}\big|\bb E (\exp (\ii t (X^{(1,M)}+X^{(2,M)})))-\bb E(\exp (\ii t (X^{(1,M)})))| \\
&\qquad+|\bb E (\exp(\ii t (Y^{(M)}+a_1)))-\bb E (\exp(\ii t (Y+a_1)))|\\
&\leq t\, \bb E [|X^{(2,M)}|] + t\,\bb E[|Y-Y^{(M)}|]\leq 2t \sqrt{\frac{C}{M}}\,.
\end{align*}
Since $X^{(1,M)}+X^{(2,M)}$ is independent of $M$, we have
\begin{equation} \label{3.15}
X^{(1,M)}+X^{(2,M)} \overset{d}{\longrightarrow} Y+a_1
\end{equation}
as $N \to \infty$. We conclude the proof of Theorem \ref{thm. main result} by combining \eqref{3.13} -- \eqref{3.15}.
\end{proof}

\section{Proof of Propositions \ref{pro.cov} and \ref{pro.exp}} \label{sec 4}
In this section, we prove \eqref{19012420} in detail, based on Proposition \ref{pro.19052401}. The other statements in Proposition \ref{pro.cov} and \ref{pro.exp} can be proved in the same manner with the aid of Proposition \ref{pro.19052401}, and thus we omit the details.  We shall rewrite all quantities in terms of the Green function, and then proceed the proof using some estimates of the 4-points and 2-points correlation functions of the Green functions. 

Let $k,l \in \llbracket 1, n_\omega\rrbracket$, and without loss of generality assume $k \geq l$. Recall the definition of $\widehat{\gamma}_{a,k}$ in \eqref{3.4} for $a>0$. According to \eqref{def. t hat}, we can write 
\begin{multline} \label{123}
\mathrm{Cov} \Big(\widehat{t}_k(H)\widehat{t}_k(H), \widehat{t}_l(H)\widehat{t}_l(H)\Big) 
=\frac{1}{16\pi^4}\oint_{\widehat{\gamma}_{1,k}} \oint_{\widehat{\gamma}_{2,k}}\oint_{\widehat{\gamma}_{3,l}}\oint_{\widehat{\gamma}_{4,l}}{T}_k(z_1){T}_k(z_2){T}_l(z_3){T}_l(z_4)\\
\qquad \times \text{Cov}\Big(m_N^\Delta(z_1)m_N^\Delta(z_2), m_N^\Delta(z_3)m_N^\Delta(z_4)\Big)\prod_{i=1}^4 {\rm d}z_i\,.
\end{multline}
Here we choose $a$ to be $1,2,3,4$ for four contours respectively such that they are well separated in case $k\geq l$. 
Analogously,  we can write 
\begin{align*}
\mathbb{E} \widehat{t}_k(H)\widehat{t}_k(H)= -\frac{1}{4\pi^2}\oint_{\widehat{\gamma}_{1,k}}\oint_{\widehat{\gamma}_{2,k}} T_k(z_1)T_k(z_2) \mathbb{E}\big(m_N^\Delta(z_1)m_N^\Delta(z_2)\big) {\rm d}z_1{\rm d}z_2\,.
\end{align*}
In the following we shall abbreviate $\gamma_i \deq {\gamma}_{i,k}$, $\widehat{\gamma}_i\deq \widehat{\gamma}_{i,k}$ for $i=1,2$ and $\gamma_i \deq {\gamma}_{i,l}$, $\widehat{\gamma}_i\deq \widehat{\gamma}_{i,l}$ for $i=3,4$.  Also, we write $z_i=E_i+\ii \eta_i$ for $i=1,2,3,4$. 
Let us define 
\begin{equation} \label{u_j}
u_i=\sqrt{|E^2_i-1|+|\eta_i|}
\end{equation}
for $i=1,...,4$. We set
\begin{align}
t_{i,j}\deq \frac{1}{u_i^2u_j^2}+\frac{1}{u_i^2|z_i-z_j|}+ \frac{1}{u_j^2|z_i-z_j|} \label{def of t_ij}
\end{align}
for $i\ne j\in \{1,2,3,4\}$. For the rest of the paper we will often encounter the following fundamental error 
\begin{equation} \label{error}
\cal E \deq \frac{kl(t_{1,2}t_{3,4}+t_{1,3}t_{2,4}+t_{1,4}t_{2,3})}{N^{4+\zeta}}\,,
\end{equation}
which will be used to bound various error terms. 
The following lemma states an elementary estimate concerning $t_{i,j}$, whose proof is omitted due to its triviality.
\begin{lem} \label{lem:5.22}
	For $i \ne j \in \{1,...,4\}$, we have
	\begin{equation}
	\oint_{\widehat{\gamma}_{i}} \oint_{\widehat{\gamma}_{j}} t_{i,j}\, {\rm d}z_i\, \dd z_j\prec 1 \,.
	\end{equation}
\end{lem}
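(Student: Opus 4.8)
The plan is to bound the three summands in the definition \eqref{def of t_ij} of $t_{i,j}$ separately, reducing everything to two one-dimensional estimates: that
\[
\oint_{\widehat{\gamma}_m} \frac{|\dd z_m|}{u_m^2} \prec 1 \qquad \text{for each } m \in \{1,2,3,4\}\,,
\]
and that
\[
\oint_{\widehat{\gamma}_j} \frac{|\dd z_j|}{|z_i - z_j|} \prec 1 \qquad \text{uniformly in } z_i \in \widehat{\gamma}_i\,.
\]
Granting these, the term $u_i^{-2}u_j^{-2}$ factorizes and is controlled by the first estimate applied on $\widehat\gamma_i$ and on $\widehat\gamma_j$; for the two remaining terms I would integrate first in $z_j$ (resp.\ $z_i$) using the second estimate and then in the other variable using the first.

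For the first estimate I would parametrize $\widehat\gamma_m^{(1)}$ by $E \in (-1,1)$. Since $m \leq n_\omega \ll N$ one has $|\eta| \geq a m^{-2} \gg N^{-10}$ on $\gamma_m^{(1)}$, so no truncation occurs there. Differentiating $\eta = \pm a m^{-1}\sqrt{1-E^2+m^{-2}}$ gives $|\eta'(E)| \lesssim m^{-1}(1-E^2+m^{-2})^{-1/2} \lesssim 1$, hence $|\dd z_m| \asymp \dd E$ along $\gamma_m^{(1)}$, while $u_m^2 = (1-E^2)+a m^{-1}\sqrt{1-E^2+m^{-2}} \asymp (1-E^2)\vee m^{-2}$. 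With $\kappa = 1-E^2$, the bulk $\kappa \asymp 1$ contributes $O(1)$, the range $m^{-2} \lesssim \kappa \lesssim 1$ contributes $\asymp \int_{m^{-2}}^{1}\kappa^{-1}\,\dd\kappa \asymp \log m$, and the set $\{\kappa \lesssim m^{-2}\}$, an $E$-interval of length $\asymp m^{-2}$ on which $u_m^2 \asymp m^{-2}$, contributes $O(1)$; finally the edge arc $\widehat\gamma_m^{(2)}$ has length $\asymp m^{-2}$ with $u_m^2 \asymp m^{-2}$ on it, again contributing $O(1)$. Summing gives the bound $\prec \log m \prec 1$.

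For the second estimate the key structural fact is that, with $k \geq l$ and the parameters $a=1,2,3,4$ assigned to the four contours, the curves $\widehat\gamma_1,\widehat\gamma_2$ (parameter $k$) and $\widehat\gamma_3,\widehat\gamma_4$ (parameter $l$) are pairwise non-crossing, indeed nested: along $\gamma^{(1)}$ the two $k$-curves have heights in the fixed ratio $2$ and the two $l$-curves in the ratio $4/3$, and $2k^{-1}\sqrt{1-E^2+k^{-2}} < 3l^{-1}\sqrt{1-E^2+l^{-2}}$ since $k \geq l$, so both $k$-curves lie strictly below both $l$-curves, while the edge arcs are concentric of radii $ak^{-2} < bl^{-2}$. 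A short computation using $|\eta'| \lesssim 1$ upgrades these vertical and radial gaps to the Euclidean bound $\dist(\widehat\gamma_i,\widehat\gamma_j) \gtrsim k^{-2} \geq N^{-2}$ for all $i \ne j$. Since each $\widehat\gamma_j$ is a piecewise-smooth curve of length $O(1)$ that, after the natural rescaling, moves away at a linear rate from its nearest point to any fixed $z_i$, this yields $\oint_{\widehat\gamma_j}|z_i-z_j|^{-1}|\dd z_j| \lesssim \log\!\big(\diam\widehat\gamma_j/\dist(z_i,\widehat\gamma_j)\big) \lesssim \log N \prec 1$, uniformly in $z_i$.

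I expect the only genuinely delicate point to be the uniformity in $z_i$ in the second estimate: one must rule out the two contours approaching each other faster than a fixed power of $N^{-1}$, which is precisely what the nesting structure together with the bound $k, l \leq n_\omega = \lfloor N^\alpha(\log N)^2\rfloor$ delivers, and one must observe that integrating the integrable singularity $|z_i-z_j|^{-1}$ along a smooth curve costs only a logarithm, which $\prec$ absorbs.
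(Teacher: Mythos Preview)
Your proposal is correct and fills in precisely what the paper omits: the paper states only that the proof ``is omitted due to its triviality'' and gives no argument at all. Your decomposition into the two one-dimensional estimates is the natural way to handle it, and both estimates go through as you describe; the total bound is $O((\log N)^2)$, which $\prec$ absorbs. Two minor remarks: first, your notation $m$ does double duty for the contour index and for the associated degree ($k$ or $l$), which is harmless but worth disambiguating in a final write-up. Second, for the ``linear rate'' step, rather than appeal to a general chord-arc principle, it is cleanest on $\gamma^{(1)}_j$ simply to use $|z_i-z_j|\geq |E_i-E_j|\vee \dist(z_i,\widehat\gamma_j)$ and integrate in $E_j$; on $\gamma^{(2)}_j$ the trivial bound (length over distance) already gives $O(1)$ with your separation estimate.
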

Next, for $z_i\in \gamma_i, z_j \in \gamma_j, i \ne j$, we define the following two functions
\begin{align} \label{5.18}
f(z_i,z_j)\deq &\frac{(z_i-z_j)^2-(\sqrt{z_i^2-1}-\sqrt{z_j^2-1})^2}{2N^2(z_i-z_j)^2\sqrt{z_i^2-1}\sqrt{z_j^2-1}}
-\frac{2}{N^2}c_4 m(z_i)m'(z_i)m(z_j)m'(z_j)-\frac{\sigma^2-2}{4N^2}m'(z_i)m'(z_j)\,,
\end{align}
and
\begin{equation} \label{eqn: g}
g(z_i)\deq -\frac{1}{4N\sqrt{z_i^2-1}}\Big(m'(z_i)+4\,c_4(m(z_i))^4+(\sigma^2-2)(m(z_i))^2\Big)\,.
\end{equation}
With the notations defined in (\ref{def of t_ij}), (\ref{error}), (\ref {5.18}) and  (\ref{eqn: g}), we can state  our main technical estimate as the following proposition, whose proof will be postponed to Section \ref{sec 5}.
\begin{pro}\label{pro.19052401} Let $z_i\in \widehat{\gamma}_{i}$ for $i=1,...,4$, we have 
	\begin{equation} \label{19052410}
	\mathbb{E} \Big(m_N^\Delta(z_1)m_N^\Delta(z_2)\Big)= f(z_1,z_2)+g(z_1)g(z_2)+O_{\prec}(kt_{1,2}N^{-2-\zeta})\,.
	\end{equation}
	and
	\begin{multline} \label{19052411}
	\mathrm{Cov}\Big(m_N^\Delta(z_1)m_N^\Delta(z_2), m_N^\Delta(z_3)m_N^\Delta(z_4)\Big)\\
	=f(z_1,z_3)f(z_2,z_4)+f(z_1,z_4)f(z_2,z_3)+g(z_1)g(z_3)f(z_2,z_4) +g(z_1)g(z_4)f(z_2,z_3)\\
	+g(z_2)g(z_3)f(z_1,z_4)+g(z_2)g(z_4)f(z_1,z_3)+O_{\prec}(\cal E)\eqd F(z_1,z_2,z_3,z_4) +O_{\prec}(\cal E)\,.
	\end{multline}
\end{pro}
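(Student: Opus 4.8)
The goal is to prove Proposition \ref{pro.19052401}, which computes the two- and four-point correlation functions of the mesoscopic linear statistics $m_N^\Delta(z) = \ul G(z) - m(z)$ evaluated on the contours $\widehat\gamma_i$, with error terms governed by $t_{i,j}$ and $\cal E$.

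\textbf{Overall strategy.} The plan is to express everything in terms of Green functions and run a cumulant expansion. First I would recall the standard self-consistent equation manipulation: writing $z\ul G = \ul{HG} - 1$ and applying Lemma \ref{lem:cumulant_expansion} to each term $\bb E[H_{ij}G_{ji}\cdots]$, one obtains a hierarchy of identities for $\bb E[m_N^\Delta(z_1)m_N^\Delta(z_2)]$ and for the covariance, with the leading terms coming from the second cumulant $\cal C_2(H_{ij}) = (1+\delta_{ij}(\sigma^2-1))/(4N)$, the correction terms from $\cal C_4(H_{ij})$ contributing the $c_4$ pieces, and the off-diagonal variance asymmetry contributing the $(\sigma^2-2)$ pieces. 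The $\beta$-dependence enters through whether $\bb E H_{ij}^2$ vanishes. Throughout, I would use the local law (Theorem \ref{thm. local semicircle law}) in the strengthened form valid on $\mathcal S^o$ near the edge, since the contours $\gamma_{a,k}^{(1)}$ dip to imaginary part $\sim k^{-1}u^2$ and $\gamma_{a,k}^{(2)}$ stays at distance $\sim k^{-2}$ from $\pm 1$; the quantities $u_i$ in \eqref{u_j} are exactly the natural spectral-scale parameters there, and $\Im m(z_i) \asymp u_i$, $|m'(z_i)| \asymp u_i^{-1}$ on these contours.

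\textbf{Key steps in order.} (1) Derive the identity for $\bb E[\ul G(z_1)\ul G(z_2)]$ via cumulant expansion, isolating the leading recursion; solve it to get the deterministic main term, which after subtracting $m(z_1)m(z_2)$ yields $f(z_1,z_2) + g(z_1)g(z_2)$ — here $f$ is the ``$\beta=2$-type'' two-loop term (the first summand in \eqref{5.18} being the universal $\frac{1}{2N^2}\partial_{z_1}\partial_{z_2}\log\frac{\cdots}{\cdots}$ structure familiar from CLT computations) plus the $c_4$ and $\sigma^2$ corrections, while $g(z_i)g(z_j)$ is the ``$\beta=1$'' rank-type contribution. (2) Bound the error: every term dropped in the cumulant expansion is estimated using the local law, Lemma \ref{prop_prec}, and Lemma \ref{Tlemh}; the key point is that each extra factor of an off-diagonal Green function entry or each higher cumulant gains a power of $N^{-1/2}$ or $(N\eta)^{-1/2}$, and one must check these combine to at most $k t_{1,2}N^{-2-\zeta}$ — the factor $k$ absorbing the loss from $\eta \sim k^{-1}u^2$ rather than $\eta\sim u^2$, and $\zeta = (1/3-\alpha)\wedge\alpha$ being the slack from $k \leq n_\omega \ll N^{1/3}$. (3) For the covariance \eqref{19052411}, repeat the cumulant expansion on $\bb E[\ul G(z_1)\ul G(z_2)\ul G(z_3)\ul G(z_4)]$ and subtract the product of pair expectations; the connected four-point function factorizes into the six pairings shown, where each ``$f$'' factor is a genuine covariance edge and each ``$g$-$g$-$f$'' term reflects that the $\beta=1$ rank-one-type fluctuation couples two of the four points. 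The combinatorics of which pairings survive is dictated by the same orthogonality/Wick structure as in the CLT for linear eigenvalue statistics. (4) Collect errors into $\cal E$ using the symmetric sum $t_{1,2}t_{3,4}+t_{1,3}t_{2,4}+t_{1,4}t_{2,3}$, which is exactly what one gets by pairing up the four spectral scales.

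\textbf{Main obstacle.} The hard part will be step (2)–(3): controlling the error terms \emph{uniformly down to the contour scale} $\eta \asymp k^{-1}u^2$ near the edge. The naive local law gives errors like $(N\eta)^{-1}$ which, integrated against $T_k$ over $\widehat\gamma_{a,k}$, would be too large by powers of $k$; one must exploit cancellations — in particular that $m_N^\Delta$ has mean zero to leading order and that the relevant products of Green function entries have additional smallness coming from $\Im m \asymp u$ being small near the edge (the ``$\sqrt{\Im m/(N\eta)}$'' rather than ``$1/(N\eta)$'' in \eqref{2.2}). Getting the bookkeeping of $u_i$ and $|z_i-z_j|$ exactly into the form $t_{i,j}$ so that Lemma \ref{lem:5.22} can later close the contour integrals is the delicate part; this is presumably why the authors defer it to Section \ref{sec 5} as the main technical estimate. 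I would organize the error analysis around a single ``power-counting lemma'' that tracks, for each Green function factor, its contribution $u^{-1}$, and for each cumulant expansion step, its gain, then verify the worst case matches $\cal E$.
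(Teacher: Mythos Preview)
Your overall approach---cumulant expansion of $\bb E[H_{ij}G_{ji}\cdots]$ combined with the local law on the contours---is exactly what the paper does. Two points of comparison are worth noting.

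First, your interpretation of the $g$-terms is off. You describe $g(z_i)g(z_j)$ as a ``$\beta=1$ rank-type contribution'' arising in the fluctuation analysis, but in fact $g(z_i)$ is the \emph{deterministic bias} $\bb E\,\ul G(z_i)-m(z_i)$ (see Lemma~\ref{lem: expectstion}). The paper organizes the proof around this: it first establishes the \emph{centered} covariance $\cov(\langle\ul G\rangle\langle\ul S\rangle,\langle\ul T\rangle\langle\ul V\rangle)=f(z_1,z_3)f(z_2,z_4)+f(z_1,z_4)f(z_2,z_3)+O_\prec(\cal E)$ (Proposition~\ref{prop: centered}), which contains \emph{only} the $f\cdot f$ pairings, and then writes $m_N^\Delta(z_i)=\langle\ul G(z_i)\rangle+g(z_i)+O_\prec(\cdot)$ and expands the product algebraically. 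The four $g\cdot g\cdot f$ terms in \eqref{19052411} drop out of this expansion; they are not a separate Wick channel. Your plan to extract all six terms simultaneously from a single four-point cumulant expansion would work in principle but obscures this structure.

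Second, your error analysis is too optimistic in one respect: several terms that appear after the first cumulant expansion---for instance $\frac{1}{s_1}\bb E\langle\ul G\rangle^2\langle\ul S\rangle\langle\ul T\rangle\langle\ul V\rangle$, or in $L_2$ the term $\frac{1}{s_1 N^2}\sum_{i,j}\cal C_3(H_{ij})\,\bb E\, G_{ii}G_{jj}(S^2)_{ij}\langle\ul T\rangle\langle\ul V\rangle$, or in $L_3$ the term with $\langle G_{ii}^2G_{jj}^2\rangle$---are \emph{not} bounded by $\cal E$ if you only apply the local law pointwise. The paper closes each of these by a \emph{second} cumulant expansion in a different spectral parameter (see \eqref{5.7}, \eqref{5.13}, and the treatment of $L_{3,1}$ in Lemma~\ref{lem.19101701}), which produces an extra factor of $1/s_j\asymp 1/u_j$ and brings the bound down to $\cal E$. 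Your proposed ``single power-counting lemma'' will not suffice here; you need to anticipate that certain terms require iteration. Beyond that, the ingredients you list (Lemma~\ref{lem for s_1}-type bounds on $(G^2)_{\b w\b v}$, $\ul{G^2}$, $\ul{G^3}$ via Helffer--Sj\"ostrand near the edge, and the isotropic local law for sums like $\sum_i a_{ij}(S^2)_{ij}$) are indeed what the paper uses.
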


Based on Proposition \ref{pro.19052401}, we can now prove \eqref{19012420}.

\begin{proof}[Proof of \eqref{19012420}]
	By \eqref{eqn: bound T} and Lemma \ref{lem:5.22}, we have
	\begin{equation*}
	\oint_{\widehat{\gamma}_{1}} \oint_{\widehat{\gamma}_{2}}\oint_{\widehat{\gamma}_{3}}\oint_{\widehat{\gamma}_{4}} \big|{T}_k(z_1){T}_k(z_2){T}_l(z_3){T}_l(z_4)\big|\,\cal E\,\prod_{i=1}^4 {\rm d}z_i\prec \frac{kl}{N^{4+\zeta}}\,.
	\end{equation*}
 This together with \eqref{123} and \eqref{19052411} leads to
\begin{align}  \label{4.8}
&\mathrm{Cov} \Big(\widehat{t}_k(H)\widehat{t}_k(H), \widehat{t}_l(H)\widehat{t}_l(H)\Big) \\
=&\frac{1}{16\pi^4}\oint_{\widehat{\gamma}_{1}} \oint_{\widehat{\gamma}_{2}}\oint_{\widehat{\gamma}_{3}}\oint_{\widehat{\gamma}_{4}}{T}_k(z_1){T}_k(z_2){T}_l(z_3){T}_l(z_4)
 F(z_1,z_2,z_3,z_4)\prod_{i=1}^4 {\rm d}z_i
+O_{\prec}(klN^{-(4+\zeta)})\nonumber\\
=&\frac{1}{16\pi^4}\oint_{\gamma_{1}} \oint_{\gamma_{2}}\oint_{\gamma_{3}}\oint_{\gamma_{4}}{T}_k(z_1){T}_k(z_2){T}_l(z_3){T}_l(z_4) F(z_1,z_2,z_3,z_4)\prod_{i=1}^4 {\rm d}z_i
+O_{\prec}(klN^{-(4+\zeta)})\nonumber\,,
\end{align}
where in the second step we used \eqref{3.4} and \eqref{eqn: bound T} to replace the domain of integration $\oint_{\widehat{\gamma}_{1}} \oint_{\widehat{\gamma}_{2}}\oint_{\widehat{\gamma}_{3}}\oint_{\widehat{\gamma}_{4}}$ by
$\oint_{{\gamma}_{1}} \oint_{{\gamma}_{2}}\oint_{{\gamma}_{3}}\oint_{{\gamma}_{4}}$, with an error of  $O(N^{-10})$ abosorbed by $O_{\prec}(klN^{-(4+\zeta)})$. 

Let us denote by $q_i=m(z_i)/2$ for $i=1,...,4$. To compute the RHS of \eqref{4.8}, we first consider
	\begin{equation} \label{4.9}
	\oint_{\gamma_{1}}\oint_{\gamma_{3}}{T}_k(z_1){T}_l(z_3) f(z_1,z_3) {\rm d}z_1 {\rm d}z_3=\oint_{|q_1|=\rho_1}\oint_{|q_3|=\rho_3}{T}_k(z_1){T}_l(z_3) f(z_1,z_3) {\rm d}z_1 {\rm d}z_3
	\end{equation} 
	for some $0<\rho_1<\rho_3<1$. Here we used the assumption $k \geq l$, and the fact that ${T}_k(z_1){T}_l(z_3) f(z_1,z_3)$ is analytic for $z_1,z_3 \in \bb C\backslash[-1,1]$. Let $q_i'$ denote the derivative of $q_i$ with respect to $z_i$.	By
	\[
	z_i=\frac{-q_i-q_i^{-1}}{2}\,, \quad \sqrt{z_i^2-1}=\frac{q_i-q_i^{-1}}{2}\,, \quad \mbox{and} \quad  q_i'=-\frac{q_i}{\sqrt{z_i^2-4}}\,,
	\]
	we can write
	\begin{align*}
	&f(z_1,z_3)
	=\frac{q'_1q'_3}{2N^2q_1q_3}\Big(1-\Big(\frac{1+q_1q_3}{1-q_1a_3}\Big)^2\Big)-\frac{32}{N^2}c_4 q_1q_1'q_3q_3' -\frac{\sigma^2-2}{N^2}q_1'q_3',\nonumber\\
&T_k(z_1)=\frac{1}{2}(q_1^k+q_1^{-k}).
	\end{align*}
	Thus
	\begin{multline*}
	\eqref{4.9}=\oint_{|q_1|=\rho_1}\oint_{|q_3|=\rho_3} \frac{1}{4}\big(q_1^k+q_1^{-k}\big)(q_3^l+q_3^{-l}) \\ \times
	\bigg(\frac{1}{2N^2q_1q_3}\Big(1-\Big(\frac{1+q_1q_3}{1-q_1q_3}\Big)^2\Big)-\frac{32}{N^2}c_4q_1q_3-\frac{\sigma^2-2}{N^2}\bigg){\rm d}q_1{\rm d}q_3 \eqd (\mathrm{I})+(\mathrm{II})+(\mathrm{III})\,.
	\end{multline*}
	By writing $q_i=\rho_i\e{\ii \theta_i}$ and using $\rho_1,\rho_3\in (0,1)$, we have
	\begin{align*}
	(\mathrm{I})=&\int_{-\pi}^{\pi}\int_{-\pi}^{\pi} \frac{1}{4}\big(\rho_1^k\e{\ii k\theta_1} +\rho_1^{-k}\e{-\ii k \theta_1}\big)\big(\rho_3^l\e{\ii l\theta_3} +\rho_3^{-l}\e{-\ii l \theta_3}\big)\\
	&\times \bigg(\frac{\ii^2}{2N^2}\bigg(1-\bigg(\frac{1+\rho_1\rho_3\text{e}^{\ii(\theta_1+\theta_3)}}{1-\rho_1\rho_3\text{e}^{\ii(\theta_1+\theta_3)}}\bigg)^2\bigg)\bigg)\dd \theta_1 \dd \theta_3\\
	=&\int_{-\pi}^{\pi}\int_{-\pi}^{\pi} \frac{1}{4}\big(\rho_1^k\e{\ii k\theta_1} +\rho_1^{-k}\e{-\ii k \theta_1}\big)\big(\rho_3^l\e{\ii l\theta_3} +\rho_3^{-l}\e{-\ii l \theta_3}\big)\\
	&\times\frac{-1}{2N^2}(-4)\sum_{n=1}^\infty n (\rho_1\rho_3)^{n}\text{e}^{ \ii n (\theta_1+\theta_3)}\dd \theta_1 \dd \theta_3
	=\frac{2\pi^2 k}{N^2}\delta_{kl} \,.
	\end{align*}
	Similarly, we can show that 
	$$
	(\mathrm{II})=\frac{32\pi^2}{N^2}c_4\delta_{k2}\delta_{l2}\,, \quad \mbox{and} \quad (\mathrm{III})=\frac{(\sigma^2-2)\pi^2}{N^2}\delta_{k1}\,,
	$$
	which imply
	\[
	\oint_{\gamma_{1}}\oint_{\gamma_{3}}{T}_k(z_1){T}_l(z_3) f(z_1,z_3) {\rm d}z_1 {\rm d}z_3=\frac{2\pi^2 k}{N^2}\delta_{kl}+\frac{32\pi^2}{N^2}c_4\delta_{k2}\delta_{l 2}+\frac{(\sigma^2-2)\pi^2}{N^2}\delta_{l1}\delta_{k1}\,.
	\]
	Similarly, we have
	\[
	g(z_1)=\frac{q_1q_1'}{N(1-q_1^2)}+\frac{16c_4q_1^3q_1'}{N}+\frac{(\sigma^2-2)q_1q_1'}{N}\,.
	\]
	Therefore, 
	\begin{multline*}
	\oint_{\gamma_1} T_k(z_1) g(z_1) \dd z_1=\oint_{|q_1|=\rho_1}\frac{1}{2}(q_1^k+q_1^{-k})\Big(\frac{q_1}{N(1-q_1^2)}+\frac{16c_4q_1^3}{N}+\frac{(\sigma^2-2)q_1}{N}\Big) \dd q_1\\
	=\int_{-\pi}^{\pi} \frac{1}{2}(q_1^k+q_1^{-k})\Big(\frac{1}{N}\sum_{n=1}^{\infty}q_1^{2n-1}+\frac{16c_4q_1^3}{N}+\frac{(\sigma^2-2)q_1}{N}\Big) \ii q_1 \dd \theta_1\\=\frac{\pi \ii}{2N}(1+(-1)^k)+\frac{16\pi c_4 \ii}{N} \delta_{k4}+\frac{\pi (\sigma^2-2)\ii}{N}\delta_{k2}\,,
	\end{multline*}
	where in the second step we used the change of variable $q_1=\rho_1 \e{\ii \theta }$. Plugging the above into \eqref{4.8} we have \eqref{19012420} as desired.
\end{proof}

\section{Proof of Proposition \ref{pro.19052401}} \label{sec 5}
In this section, we prove Proposition \ref{pro.19052401}. We will  state the proof of (\ref{19052411}) in detail. The proof of (\ref{19052410}) is similar and simpler, and thus we omit the details. Recall  $z_i=E_i+\ii \eta_i$ for $i=1,2,...,4$. For simplicity, we denote by 
\begin{align*}
G=G(z_1), \quad S=G(z_2), \quad T=G(z_3), \quad V=G(z_4). 
\end{align*}
In sections \ref{sec 5.1} -- \ref{sec: 5.3}, we shall first prove the following  estimate for the centered quantities.
\begin{pro} \label{prop: centered}
	Let $z_i\in \widehat{\gamma}_{i}$ for $i=1,...,4$, we have 
	\begin{align}
	\cov \big(\langle \underline{G}\rangle \langle \underline{S}\rangle, \langle \underline{T}\rangle \langle \underline{V}\rangle\big)=f(z_1,z_3)f(z_2,z_4)+f(z_1,z_4)f(z_2,z_3)+O_{\prec}(\cal E)\,, \label{19052450}
	\end{align} 
	where $f(\cdot,\cdot)$ is defined  in \eqref{5.18} and $\cal E$ is defined in (\ref{error}). 
\end{pro}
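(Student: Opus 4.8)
\textbf{Proof proposal for Proposition \ref{prop: centered}.}

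The plan is to express the covariance of products of centered traces of Green functions via a cumulant expansion in the matrix entries, and to track which terms survive up to the error $\cal E$. First I would write
\[
\cov \big(\langle \ul G\rangle \langle \ul S\rangle, \langle \ul T\rangle \langle \ul V\rangle\big)=\bb E\big[\langle \ul G\rangle \langle \ul S\rangle \langle \ul T\rangle \langle \ul V\rangle\big]+\bb E\big[\langle \ul G\rangle \langle \ul S\rangle\big]\,\bb E\big[\langle \ul T\rangle \langle \ul V\rangle\big]-\bb E\big[\langle \ul G\rangle \langle \ul S\rangle \langle \ul T\rangle \langle \ul V\rangle\big]\text{-type bookkeeping},
\]
more precisely reducing everything to the fourth mixed cumulant of the four random variables $\ul G,\ul S,\ul T,\ul V$ plus products of second mixed cumulants. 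The point is that $\cov(XY,ZW)$, for centered $X,Y,Z,W$, equals $\cov(X,Z)\cov(Y,W)+\cov(X,W)\cov(Y,Z)$ plus the joint fourth cumulant $\kappa_4(X,Y,Z,W)$ plus terms involving third cumulants. So I expect the proof to split into (a) showing the pairwise covariances $\cov(\ul G,\ul T)$ etc.\ are governed by $f(z_i,z_j)$ up to admissible error, which is essentially the two-point function already implicit in \eqref{19052410}; (b) showing the genuinely fourth-order connected piece and the third-order pieces are $O_\prec(\cal E)$.

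For the mechanics I would use the standard resolvent identity $z\ul G=-1+\ul{HG}$ together with the cumulant expansion formula of Lemma \ref{lem:cumulant_expansion} applied to $\bb E[H_{ij} f(H)]$ with $f$ a product of resolvent entries, and Lemma \ref{Tlemh} for the size of the cumulants $\cal C_n(H_{ij})=O(N^{-n/2})$. Each application of the expansion lowers the ``connectedness deficit'' or produces an extra off-diagonal resolvent entry, which under the local law of Theorem \ref{thm. local semicircle law} is small; the key bookkeeping quantities are the control parameters $u_i=\sqrt{|E_i^2-1|+|\eta_i|}$ and the bilinear quantities $t_{i,j}$ of \eqref{def of t_ij}. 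The scheme is: repeatedly expand, isolate the ``main'' (Gaussian/leading) contribution — which reassembles into $f(z_1,z_3)f(z_2,z_4)+f(z_1,z_4)f(z_2,z_3)$, exactly the fully-paired Wick contractions across the two factors — and bound every remaining graph by a product of $t_{i,j}$'s times $N^{-4-\zeta}$ using the power counting $\bb E|\ul G-m|^n\prec (N\eta)^{-n}$ from \eqref{2.5} and the improved edge bounds on $\cal S^o$. The contributions from the fourth cumulant $c_4$ (via $m_4$) and the diagonal-variance parameter $\sigma^2$ enter as the last two terms in the definition \eqref{5.18} of $f$; I would keep these by treating the $n=3$ term in the cumulant expansion (which carries $\cal C_4(H_{ij})$) and the difference between diagonal and off-diagonal second moments as explicit, and show all other subleading cumulant contributions are absorbed into $\cal E$.

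The main obstacle, as flagged in the introduction, is the \emph{$k$-uniformity}: the contours $\widehat\gamma_{i}$ depend on $k$ (or $l$) and pinch the edge at distance $\sim k^{-2}$, so $\eta_i$ can be as small as $k^{-2}$ and $u_i\sim k^{-1}$; hence naive bounds like $(N\eta)^{-1}$ blow up as $k^2/N$, and one must exhibit the cancellations coming from the structure of $T_k$ and the orthogonality built into $f$. Concretely, the hard part is proving that the error is genuinely $O_\prec(\cal E)$ with the $kl$ prefactor and the $t_{i,j}$ combination shown in \eqref{error}, rather than something larger, \emph{uniformly} for all $k,l\le n_\omega$; this requires carefully exploiting that $\ul G-m$ near the edge satisfies the stronger bound $\frac{1}{N(\eta+\kappa)}$ of Theorem \ref{thm. local semicircle law} on $\cal S^o$, and that after integrating against $T_k$ along $\gamma_{a,k}$ (recall $\sup_{z\in\gamma_{a,k}}|T_k(z)|=O(1)$ by \eqref{eqn: bound T}) the bilinear errors integrate to $O_\prec(1)$ by Lemma \ref{lem:5.22}. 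Once Proposition \ref{prop: centered} is in hand, Proposition \ref{pro.19052401} for the non-centered quantities $m_N^\Delta=\ul G-m$ follows by expanding $\langle\cdot\rangle$ around $m(z)$ and noting $\bb E\ul G-m=O_\prec((N\eta)^{-1})$, which supplies the $g(z_i)g(z_j)$ cross terms in \eqref{19052411}.
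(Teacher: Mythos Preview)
Your proposal is correct and follows essentially the same route as the paper: resolvent identity $zG=HG-I$ plus the cumulant expansion of Lemma \ref{lem:cumulant_expansion} in the matrix entries, with the local law of Theorem \ref{thm. local semicircle law} (and its edge improvement on $\cal S^o$) controlling all error terms through the parameters $u_i$ and $t_{i,j}$. One small correction: for centered $X,Y,Z,W$ the identity $\cov(XY,ZW)=\cov(X,Z)\cov(Y,W)+\cov(X,W)\cov(Y,Z)+\kappa_4(X,Y,Z,W)$ is exact, with \emph{no} third-cumulant contributions---any partition of four points containing a block of size three also contains a singleton, and $\kappa_1=0$. Organizationally the paper does not isolate $\kappa_4$; it expands $\bb E[\langle\ul G\rangle\langle\ul S\rangle\langle\ul T\rangle\langle\ul V\rangle]$ directly into the self-consistent relation \eqref{5.5}, bounds the junk via Lemmas \ref{lem:5.2}--\ref{lem.19101702}, iterates once more (Lemma \ref{lem.200701}) to exhibit all three Wick-pairing products $f\cdot f$, and only then subtracts $\bb E[\langle\ul G\rangle\langle\ul S\rangle]\,\bb E[\langle\ul T\rangle\langle\ul V\rangle]$. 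This is the same computation as bounding $\kappa_4$, packaged slightly differently; your mechanics and the paper's execution coincide in substance.
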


We emphasize here that all the Green functions have deterministic upper bound $N^{10}$ in operator norm in this section, since we are working on $\widehat{\gamma}_{i}$'s (c.f. (\ref{3.4})). According to Lemma \ref{prop_prec} (ii), it will be clear that all the high probability bounds on the functionals of Green functions  in this section will  be still valid after  one takes expectation of the functionals.   

\subsection{The first step} \label{sec 5.1} To study the LHS of (\ref{19052450}),  it suffices to estimate the 4-point and 2-point correlation functions of the Green functions 
\begin{align*}
\mathbb{E}\langle \underline{G}\rangle \langle \underline{S}\rangle \langle \underline{T}\rangle \langle \underline{V}\rangle,  \qquad  \mathbb{E}\langle \underline{G}\rangle \langle \underline{S}\rangle. 
\end{align*}
By the resolvent identity 
\begin{equation*}
zG=HG-I\,,
\end{equation*}
we have
\begin{equation*}
z_1 \mathbb{E}\langle \underline{G}\rangle \langle \underline{S}\rangle \langle \underline{T}\rangle \langle \underline{V}\rangle =\frac{1}{N}\sum_{i,j} \mathbb{E} H_{ij} G_{ji}\big\la\langle \underline{S}\rangle \langle \underline{T}\rangle \langle \underline{V}\rangle\big\ra\,.
\end{equation*}
We compute the RHS of the above using Lemma \ref{lem:cumulant_expansion} with $\ell=3$, $h=H_{ij}$, $f\equiv f_{ij}=G_{ji}\big\la\langle \underline{S}\rangle \langle \underline{T}\rangle \langle \underline{V}\rangle\big\ra$, and get
\begin{align}
z_1 \mathbb{E}\langle \underline{G}\rangle \langle \underline{S}\rangle \langle \underline{T}\rangle \langle \underline{V}\rangle=\sum_{n=1}^{3}\frac{1}{n!}\frac{1}{N}\sum_{i,j}\cal C_{n+1}(H_{ij}) \bb E\partial^n_{ij} \big(G_{ji}\big\la\langle \underline{S}\rangle \langle \underline{T}\rangle \langle \underline{V}\rangle\big\ra\big)+\cal R \eqd \sum_{n=1}^3L_n+\cal R\,,
\label{20070601}
\end{align}
where we abbreviate $\partial_{ij}^n\deq \frac{\partial^n }{\partial H_{ij}^n}$, and $\cal R$ is the remainder term satisfying
\begin{equation} \label{R}
\begin{aligned}	
\cal R& \prec \frac{1}{N}\sum_{i,j} \bigg(\E  \sup_{|x| \le |H_{ij}|}\big| \partial_{ij}^{4}f({H}^{ij}+x\Delta^{ij})\big|^2\cdot\E\big| {H^{10}_{ij}}\mathbf{1}(|H_{ij}|>t)\big|\bigg)^{1/2}\\&\ \ +\frac{1}{N}\sum_{i,j} \E |H_{ij}|^{5} \cdot  \E { \sup_{|x| \le t}\big| \partial_{ij}^{4}f({H}^{ij}+x\Delta^{ij})\big|}
\end{aligned}
\end{equation}
for any $t>0$. 
Here  we define $\Delta^{ij}\in \bb C^{N\times N}$ such that $\Delta^{ij}_{xy}=(\delta_{xi}\delta_{jy}+\delta_{xj}\delta_{iy})(1+\delta_{xy})^{-1}$, and  ${H}^{ij}\deq H- H_{ij}\Delta^{ij}$. In other words, $H^{ij}$ is obtained from $H$ by setting both  $(i,j)$ and
$(j,i)$ entries to 0. Note that
\begin{multline*}
L_1=\frac{1}{N}\sum_{i,j}\frac{1+\delta_{ij}}{4N}\bb E (\partial_{ij}G_{ij})\big\la\langle \underline{S}\rangle \langle \underline{T}\rangle \langle \underline{V}\rangle\big\ra+\frac{1}{N}\sum_{i,j}\frac{1+\delta_{ij}}{4N}\bb E G_{ij}\partial_{ij}\big\la\langle \underline{S}\rangle \langle \underline{T}\rangle \langle \underline{V}\rangle\big\ra+K
\eqd (A)+(B)+K\,,
\end{multline*}
where
\begin{align}
K\deq \frac{\sigma^2-2}{4N^2}\sum_i\bb E \partial_{ii}\big(G_{ii}\big\la\langle \underline{S}\rangle \langle \underline{T}\rangle \langle \underline{V}\rangle\big\ra\big)\,. \label{20070605}
\end{align} 
By the differential rule 
\begin{equation} \label{eqn:diff}
\partial_{ij}G_{xy} =-(G_{xi}G_{jy}+G_{xj}G_{iy})(1+\delta_{ij})^{-1}\,,
\end{equation}
we have
\begin{multline*}
(A)=-\frac{1}{4N^2}\sum_{i,j}\bb E (G_{ii}G_{jj}+G^2_{ij})\big\la\langle \underline{S}\rangle \langle \underline{T}\rangle \langle \underline{V}\rangle\big\ra\\
=-\frac{1}{4}\bigg(2\bb E \langle \underline{G}\rangle \langle \underline{S}\rangle \langle \underline{T}\rangle \langle \underline{V}\rangle+ \mathbb{E} \la\underline{G}\ra^2 \big\langle \underline{S}\rangle \langle \underline{T}\rangle \langle \underline{V}\rangle -\bb E\la\underline{G}\ra^2\bb E \big\langle \underline{S}\rangle \langle \underline{T}\rangle \langle \underline{V}\rangle+\frac{1}{N}\mathbb{E}\la \underline{G^2}\ra \langle \underline{S}\rangle \langle \underline{T}\rangle \langle \underline{V}\rangle \Big)
\end{multline*}
and 
\[
(B)=-\frac{1}{4}\Big(\frac{2}{N^2}\mathbb{E} \underline{GS^2}\langle \underline{T}\rangle\langle \underline{V}\rangle
+\frac{2}{N^2}\mathbb{E} \underline{GT^2}\langle \underline{S}\rangle \langle \underline{V}\rangle+\frac{2}{N^2}\mathbb{E} \underline{GV^2}\langle \underline{S}\rangle \langle \underline{T}\rangle\Big)\,.
\]
Thus we arrive at
\begin{multline} \label{5.5}
\mathbb{E}\langle \underline{G}\rangle \langle \underline{S}\rangle \langle \underline{T}\rangle \langle \underline{V}\rangle=  \frac{1}{-4s_1}\bigg( \mathbb{E} \la\underline{G}\ra^2 \big\langle \underline{S}\rangle \langle \underline{T}\rangle \langle \underline{V}\rangle -\bb E\la\underline{G}\ra^2\bb E \big\langle \underline{S}\rangle \langle \underline{T}\rangle \langle \underline{V}\rangle+\frac{1}{N}\mathbb{E}\la \underline{G^2}\ra \langle \underline{S}\rangle \langle \underline{T}\rangle \langle \underline{V}\rangle \\ +\frac{2}{N^2}\mathbb{E} \underline{GS^2}\langle \underline{T}\rangle\langle \underline{V}\rangle
+\frac{2}{N^2}\mathbb{E} \underline{GT^2}\langle \underline{S}\rangle \langle \underline{V}\rangle+\frac{2}{N^2}\mathbb{E} \underline{GV^2}\langle \underline{S}\rangle \langle \underline{T}\rangle-4L_2-4L_3-4K-4\cal R\bigg)\,,
\end{multline}
where
$$s_i\deq z_i+\frac{1}{2}\bb E \ul{G}(z_i)$$
for $i=1,...,4$. On the RHS of \eqref{5.5}, the first three terms and $L_2/s_1$, $\cal R/s_1 $ are the error terms, while other terms contain the leading contributions. The analysis of the error terms in \eqref{5.5} is broken down into the estimates in the following section. 

\subsection{The estimates} \label{sec:estimate}
We begin with some preliminary estimates on Green functions.  Recall $u_i$, $i=1,...,4$ defined  in \eqref{u_j} and also the notations introduced in (\ref{notation for quadratic form}).  In this section, most estimates are explicitly  stated for $z_1\in \widehat{\gamma}_1$ only for convenience, but they also hold if we replace  $(z_1,\widehat{\gamma}_1)$ by any other $(z_i,\widehat{\gamma}_i), i=2,3,4$. 
\begin{lem} \label{lem for s_1}
	Let $\b w,\b v \in \bb C^{N}$ be deterministic satisfying $\|\b w\|_2=\|\b v \|_2=1$. We have
	\[
 	\frac{1}{|s_1|} \asymp \frac{1}{u_1} \leq k\,, \quad |\ul{G}(z_1)-m(z_1)| \prec \frac{k}{Nu_1}\,, \quad  |G_{\b w\b v}(z_1)-m(z_1)\langle \b w, \b v\rangle| \prec \sqrt{\frac{k}{Nu_1}}\,, \quad |G_{\b w \b v}(z_1)| \prec 1
	\]
	and 
	\[
	|(G^2)_{\b w \b v}(z_1)| \prec \frac{1}{u_1^{3/2}}\,, \quad \ul{G^2}(z_1) \prec \frac{1}{u_1}\,, \quad \ul{G^3}(z_1) \prec \frac{\sqrt{N}}{u_1^{3/2}}\,.
	\]
	uniformly for $z_1 \in \widehat{\gamma}_{1}$. 
\end{lem}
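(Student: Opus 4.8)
The plan is to prove Lemma~\ref{lem for s_1} by first establishing the estimate on $u_1$ and $s_1$, then bootstrapping from the local semicircle law (Theorem~\ref{thm. local semicircle law}) with the explicit geometry of the contour $\widehat\gamma_1=\widehat\gamma_{1,k}$. Recall that on $\gamma_{a,k}^{(1)}$ we have $|\eta_1| = ak^{-1}\sqrt{1-E_1^2+k^{-2}}$, while on $\gamma_{a,k}^{(2)}$ we have $\kappa_{E_1}, |\eta_1| \asymp k^{-2}$; hence in all cases $u_1^2 = |E_1^2-1|+|\eta_1| \asymp \kappa_{E_1}+|\eta_1|$. The key elementary fact is that $\Im m(z_1) \asymp \sqrt{\kappa+\eta}$ in the bulk-type regime $\eta \lesssim \kappa$ and $\Im m(z_1) \asymp \eta/\sqrt{\kappa+\eta}$ in the edge-type regime $\eta \gtrsim \kappa$; together with $z_1 + \tfrac12 m(z_1) = -\tfrac14 m(z_1) + O(\cdots)$ from the self-consistent equation $m(z)(z + m(z)/4) = \ldots$ — more precisely, since $m$ solves $\frac14 m^2 + zm + 1 = 0$, one has $z + \frac12 m = -\frac14 m \pm \sqrt{z^2-1}$, so $|z_1 + \frac12 m(z_1)| \asymp |\sqrt{z_1^2-1}| \asymp \sqrt{\kappa+\eta} \asymp u_1$. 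Combined with $|\bb E\ul G(z_1) - m(z_1)| \prec (N\eta)^{-1}$ and the fact that on the contour $N\eta_1 \gg 1$ (which follows from $k \leq n_\omega = \lfloor \omega^{-1}(\log N)^2\rfloor$ and $\alpha < 1/3$, so $k/N \ll 1$), we get $|s_1| \asymp u_1$ and the bound $1/u_1 \leq Ck$ from the explicit form of $\eta_1$ on $\gamma^{(1)}$ near the edge and of $\eta_1 \asymp k^{-2}$ on $\gamma^{(2)}$.

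The second block of estimates comes from feeding $N\eta_1 \asymp N u_1^2/k \cdot (\text{something})$ — more carefully, on the contour one checks $N\eta_1 \gtrsim Nu_1^2/k$ when $\eta_1\lesssim\kappa$ and $N\eta_1\asymp Nu_1^2$ at the very edge — into Theorem~\ref{thm. local semicircle law}. For $|\ul G - m|$ one uses $|\ul G(z_1)-m(z_1)|\prec (N\eta_1)^{-1} \lesssim k/(Nu_1^2) \cdot u_1 = k/(Nu_1)$; actually the cleaner route is $(N\eta_1)^{-1} \le k/(Nu_1)$, which holds because $\eta_1 \gtrsim u_1^2/k \cdot$ wait — on $\gamma^{(1)}_{a,k}$, $\eta_1 = ak^{-1}\sqrt{1-E_1^2+k^{-2}} \asymp k^{-1}u_1$ precisely when $1-E_1^2 \gtrsim k^{-2}$, i.e. $\eta_1 \asymp u_1^2/(k u_1') $… The point I will make rigorous is simply $N\eta_1 \ge c\, Nu_1/k$ on the whole contour, whence $(N\eta_1)^{-1}\le k/(cNu_1)$ and $\sqrt{\Im m/(N\eta_1)}+(N\eta_1)^{-1} \prec \sqrt{k/(Nu_1)}$ using $\Im m \lesssim u_1$ and $k/(Nu_1) \le 1$. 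The isotropic bounds $|G_{\b w\b v} - m\langle\b w,\b v\rangle| \prec \sqrt{k/(Nu_1)}$ and $|G_{\b w\b v}| \prec 1$ then follow directly from \eqref{2.2} resp.\ its edge version in Theorem~\ref{thm. local semicircle law}, plus $|m(z_1)| \asymp 1$ which holds since $|m(z)| = |{-2z+2\sqrt{z^2-1}}|$ stays bounded above and below on $\mathcal S$.

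For the derivative estimates I would use the integral representations $(G^2)_{\b w\b v}(z_1) = \partial_{z_1} G_{\b w\b v}(z_1)$ and, for a test function, the Cauchy/spectral-calculus bound: writing $G^2 = \sum_i (\lambda_i-z_1)^{-2}\b u_i\b u_i^*$, one has $|(G^2)_{\b w\b v}| \le \sum_i |\lambda_i-z_1|^{-2}|\langle\b w,\b u_i\rangle||\langle\b u_i,\b v\rangle| \le (\Im G_{\b w\b w})^{1/2}(\Im G_{\b v\b v})^{1/2}/\eta_1$ by Cauchy–Schwarz and $\sum_i|\langle\b w,\b u_i\rangle|^2 \eta_1/|\lambda_i-z_1|^2 = \Im G_{\b w\b w}/\eta_1$; then $\Im G_{\b w\b w} \prec \Im m + \sqrt{k/(Nu_1)} \prec u_1$ (since $\sqrt{k/(Nu_1)}\le u_1 \iff k \le Nu_1^3$, which holds as $u_1 \gtrsim k^{-1}$ gives $Nu_1^3 \gtrsim N/k^3 \gg k$ for $k \ll N^{1/4}$, and more carefully for all $k\le n_\omega$ using $\alpha<1/3$), so $|(G^2)_{\b w\b v}| \prec u_1/\eta_1 \le u_1 \cdot k/u_1^2 \cdot(\text{const}) = \ldots$ — here I use $\eta_1 \gtrsim u_1^2/k$ on $\gamma^{(1)}$ and $\eta_1\asymp u_1^2$ at the edge, giving $u_1/\eta_1 \lesssim k/u_1 \le u_1^{-3/2}$ precisely when $u_1 \ge k^{-2}$, which is exactly the contour constraint. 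Similarly $|\ul{G^2}(z_1)| = |\partial_{z_1}\ul G(z_1)| \prec u_1/\eta_1 \lesssim 1/u_1$ via the same chain, and $|\ul{G^3}(z_1)| \le \ul{\,|G|^3} \le (\ul{|G|^2})^{1/2}(\ul{|G|^4})^{1/2}$, or directly $\ul{G^3} = \frac12\partial_{z_1}^2\ul G$, bounded by $\tr|G|^3/N \le \|G\|\cdot\ul{|G|^2}/N \le \eta_1^{-1}\cdot (\Im\ul G/\eta_1)$, and with $\Im\ul G \prec u_1$ this is $\prec u_1/(N\eta_1^2)$; plugging $\eta_1 \gtrsim u_1^2/k$ gives $\prec k^2/(Nu_1^3) \cdot u_1 \cdot$… the target $\sqrt N/u_1^{3/2}$ then follows once one checks $k^2/(N u_1^2) \lesssim \sqrt N / u_1^{3/2}$, i.e.\ $k^2/u_1^{1/2} \lesssim N^{3/2}$, valid for $k\le n_\omega$ and $u_1\ge k^{-2}$ when $\alpha < 1/3$. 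The main obstacle is this bookkeeping: tracking the powers of $k$ and $u_1$ through each inequality and verifying each one against the precise parametrization of $\widehat\gamma_{a,k}$ and the constraint $k \le n_\omega$ with $\alpha\in(0,1/3)$ — the geometry is what makes $1/u_1\le k$ and all the exponents line up, and getting a single clean chain of inequalities that covers both $\gamma^{(1)}$ (bulk and near-edge pieces separately) and $\gamma^{(2)}$ uniformly is the delicate part. The inputs themselves (Theorem~\ref{thm. local semicircle law}, Cauchy–Schwarz, Ward identity $\sum_i \eta/|\lambda_i-z|^2 = \Im\ul G$, and Lemma~\ref{prop_prec}(ii) to pass to expectations) are all routine.
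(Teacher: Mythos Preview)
Your treatment of the first four relations is fine and matches the paper's: they follow from the local semicircle law together with the contour geometry, once one checks $|s_1|\asymp|\sqrt{z_1^2-1}|\asymp u_1$ and $N\eta_1\gg 1$.

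The argument breaks down at the fifth and sixth estimates. Your route for $(G^2)_{\b w\b v}$ is the Ward/Cauchy--Schwarz bound
\[
|(G^2)_{\b w\b v}|\le \frac{(\Im G_{\b w\b w})^{1/2}(\Im G_{\b v\b v})^{1/2}}{\eta_1}\prec \frac{u_1}{\eta_1}\,,
\]
and then you try to conclude $u_1/\eta_1\lesssim k/u_1\le u_1^{-3/2}$. But the last inequality $k/u_1\le u_1^{-3/2}$ is equivalent to $u_1\le k^{-2}$, not $u_1\ge k^{-2}$ as you write; on the contour one has $u_1\gtrsim k^{-1}$, so the inequality goes the wrong way. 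Concretely, at $E_1=0$ one has $u_1\asymp 1$, $\eta_1\asymp k^{-1}$, so your bound gives $u_1/\eta_1\asymp k$, whereas the target $u_1^{-3/2}\asymp 1$. The same overshoot by a factor $k$ occurs for $\ul{G^2}$, and the analogous crude bound for $\ul{G^3}$ only reaches the target when $k\prec N^{1/4}$, not the full range $\alpha<1/3$.

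The reason the Ward route is too weak is that it bounds $\sum_i(\lambda_i-z_1)^{-2}\cdots$ by $\sum_i|\lambda_i-z_1|^{-2}\cdots$, destroying the cancellation in the complex squares. The paper instead separates the deterministic leading part $m'(z_1)$ from the fluctuation by a Helffer--Sj\"ostrand representation: writing $G^2=\eta_1^{-2}q((H-E_1)/\eta_1)$ with $q(x)=((x+\ii)/(x^2+1))^2$, one expresses $(G^2)_{\b w\b v}-m'\langle\b w,\b v\rangle$ as an integral of $G_{\b w\b v}(z)-m(z)\langle\b w,\b v\rangle$ against a smooth kernel and then applies the isotropic local law \emph{pointwise in $z$}. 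This yields the fluctuation bound $(N\eta_1)^{-1/2}/\eta_1$, which in the bulk is $k^{3/2}/\sqrt{N}\prec 1$ exactly under $k\ll N^{1/3}$. The same mechanism handles $\ul{G^2}$ and $\ul{G^3}$. So the missing idea is this Helffer--Sj\"ostrand step (or an equivalent Cauchy-integral argument) that lets you differentiate the local law rather than the Green function itself.
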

\begin{proof}
	The first four relations are simple consequences of Theorem \ref{thm. local semicircle law} and the construction of our contour $\widehat{\gamma}_1$. In order to prove the fifth  estimate, 
	we write
	\begin{equation*}
	G^2=\left(\frac{(H-E)/\eta+\mathrm{i}}{(H-E)^2/\eta^2+1}\right)^2 \cdot \eta^{-2} = q\bigg(\frac{H-E}{\eta}\bigg) \cdot \eta^{-2}\,,
	\end{equation*}
	where we defined $q(x) \deq \big(\frac{x + \ii}{x^2 + 1}\big)^2$. Note that $q: \bb R \to \bb C$ is smooth, and for any $n \in \bb N$, $|q^{(n)}(x)|=O((1 + |x|)^{-2})$.  We write $q_{\eta}(x)\deq q\big(\frac{x-E}{\eta}\big)$, and define $\tilde q_{\eta}$ to be the almost analytic extension of $q_\eta$ (see also \eqref{eyaya}). Let $\xi\in C_c^\infty(\mathbb{R}) $ be a fixed ($N$-independent)  smooth cutoff function which satisfies  $\xi(0)=1$.  Set $
	\chi(z)\deq\xi(y/\eta)$, where $z= x+\ii y$. Then, applying Helffer-Sj{\"o}strand formula in  Lemma \ref{HS}, we have 
	$$
	q_{\eta}(H)= \frac{1}{\pi}\int_{\bb C}\frac{\partial_{\bar{z}}(\tilde{q}_{\eta}(z)\chi(z))}{H-z}\,\mathrm{d}^2z,
	$$
due to the arbitrariness of the cutoff $\chi(z)$.  	
Therefore, we have 
	\begin{multline*}
	 (G^2)_{\b w\b v}(z)-m'(z)\la \b w,\b v \ra\\
	 =\frac{1}{2\pi\eta^2}\int_{\bb R^2} \bigg(\ii yq^{\prime\prime}_{\eta}(x)\xi(y/\eta)+\frac{\ii}{\eta} q_{\eta}(x) \xi^{\prime}(y/\eta)-\frac{y}{\eta}q^{\prime}_{\eta}(x)\xi^{\prime}(y/\eta)\bigg)
	 \big(G_{\b w\b v}(x+\ii y)-m(x+\ii y)\la \b w,\b v \ra  \big)\,\dd x  \dd y\,.
	\end{multline*}
	 By the above equation and Theorem \ref{thm. local semicircle law},  it can be shown (see e.g.\ \cite[Lemma 4.4]{HK16}) that 
	\[
	(G^2)_{\b w\b v}(z_1)\prec |m'(z_1)\la \b w,\b v \ra | + \frac{\sqrt{\frac{1}{N\eta_1}}}{|\eta_1|} \prec \frac{1}{u_1^{3/2}}
	\]
	when $|E_1|\leq 1+1/(2k^2)$. On the other hand, \eqref{19083101} and $k \leq n_{\omega}$ imply 
	\begin{multline} \label{guonianle}
	(G^2)_{\b w \b v}(z_1) =\sum_i G_{\b wi}G_{i \b v} \\
	\prec \sum_i |\la \b w, \b e_i\ra \la \b e_i,\b v \ra |+\sum_i \big(|\la \b w,\b e_i \ra| +|\la \b e_i,\b v \ra |\big)\frac{1}{\sqrt{N}(|E_1^2-1|+|\eta_1|)^{1/4}}+\frac{1}{\sqrt{|E_1^2-1|+|\eta_1|}}\prec \frac{1}{u_1}
	\end{multline}
	when $|E_1|>1+1/(2k^2)$. This prove the fifth estimate. By Theorem \ref{thm. local semicircle law} and Lemma \ref{HS}, we see that
	\[
	 \ul{G^2}(z_1) \prec |m'(z_1)|+\frac{1}{N|\eta_1|^2} \prec 
	\frac{1}{u_1}
	\]
	when $|E_1|\leq 1+1/(2k^2)$. Together with \eqref{guonianle} we deduce the sixth estimate. The proof of the last relation follows in a similar fashion and we omit the details.
\end{proof}

In the sequel, we will show that the first three terms, $L_2/s_1$, $\cal R/s_1 $ on the RHS of \eqref{5.5} are small. The results are stated in Lemmas \ref{lem:5.2}, \ref{lem:5.4} and \ref{lem:5.5}, followed by the proofs. 
\begin{lem}  \label{lem:5.2}
	Let $\cal R$ be as in \eqref{5.5}. We have
	\[
	R/s_1  \prec \cal E\,.
	\]
\end{lem}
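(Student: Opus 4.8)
The plan is to bound $\cal R$ directly from the two explicit error terms in \eqref{R} and then multiply by $|1/s_1|\asymp u_1^{-1}\le k$ (Lemma \ref{lem for s_1}). Recall that here $f\equiv f_{ij}=G_{ji}\,\big\la\la\underline{S}\ra\la\underline{T}\ra\la\underline{V}\ra\big\ra$, and that on $\widehat{\gamma}_i$ one has the deterministic bound $\|G(z_i;\,\cdot\,)\|\le|\eta_i|^{-1}\le N^{10}$ for \emph{any} real symmetric argument, so $|\partial_{ij}^4 f|\le N^{C}$ pointwise (it is a fixed-length polynomial in Green-function entries with $O(1)$ coefficients). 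Fix a small $\epsilon>0$ and take $t=N^{-1/2+\epsilon}$ in \eqref{R}. For the first term of \eqref{R}, assumption (3) on the moments gives $\bb E|H_{ij}^{10}\mathbf{1}(|H_{ij}|>t)|\le t^{-D}\bb E|H_{ij}|^{10+D}=O_D(N^{-5-D\epsilon})$ for every $D$, so that term is $O(N^{-100})$, say, and is absorbed into $\cal E$. For the second term it remains to control $\bb E\sup_{|x|\le t}|\partial_{ij}^4 f(H^{ij}+x\Delta^{ij})|$. On the event $\{|H_{ij}|\le t\}$ we have $H^{ij}+x\Delta^{ij}=H+(x-H_{ij})\Delta^{ij}$ with the perturbation of rank $\le 2$ and operator norm $\le 2N^{-1/2+\epsilon}$, so by the standard stability of the local law under small bounded-rank perturbations all estimates of Theorem \ref{thm. local semicircle law} and Lemma \ref{lem for s_1} remain valid for $H^{ij}+x\Delta^{ij}$, uniformly in $|x|\le t$; off that event, of probability $\le N^{-D}$, only the deterministic bound is used. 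Together with Lemma \ref{prop_prec}(ii) this reduces matters to a high-probability bound on $\partial_{ij}^4 f$ for matrices satisfying the conclusions of Lemma \ref{lem for s_1}.

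Next I would expand $\partial_{ij}^4 f=\sum_{a+b=4}\binom4a(\partial_{ij}^a G_{ji})\,\partial_{ij}^b\big(\la\underline{S}\ra\la\underline{T}\ra\la\underline{V}\ra\big)$ via \eqref{eqn:diff}, using two structural observations. First, iterating \eqref{eqn:diff} shows $\partial_{ij}^a G_{ji}$ is a bounded sum of products of exactly $a+1$ entries of $G$ with all indices in $\{i,j\}$, hence $\partial_{ij}^a G_{ji}\prec1$ by $|G_{\b w\b v}(z_1)|\prec1$. Second, $\partial_{ij}\underline{S}=-\tfrac{1}{N(1+\delta_{ij})}\big((S^2)_{ij}+(S^2)_{ji}\big)$, and each further application of \eqref{eqn:diff} to an $(S^m)$-entry produces $(S^{p})$- and $(S^{m+1-p})$-entries with $1\le p\le m$, so the exponent never exceeds $2$; therefore for every $c\ge1$, $\partial_{ij}^c\underline{S}$ is $\tfrac1N$ times a bounded sum of monomials each consisting of one $(S^2)$-entry and $c-1$ $S$-entries (indices in $\{i,j\}$), whence $\partial_{ij}^c\underline{S}\prec N^{-1}u_2^{-3/2}$ by the bound $|(S^2)_{\b w\b v}(z_2)|\prec u_2^{-3/2}$ of Lemma \ref{lem for s_1}; an un-differentiated factor obeys $\la\underline{S}\ra\prec k/(Nu_2)$, and likewise $\la\underline{T}\ra\prec l/(Nu_3)$, $\la\underline{V}\ra\prec l/(Nu_4)$ (with $z_2$ on a $k$-contour, $z_3,z_4$ on $l$-contours, $k\ge l$). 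Distributing the $b$ derivatives among the three trace factors, every resulting monomial satisfies $\partial_{ij}^4 f\prec N^{-3}\,\beta_2\beta_3\beta_4$, where $\beta_m\in\{\kappa_m u_m^{-1},\,u_m^{-3/2}\}$ with $\kappa_2=k$, $\kappa_3=\kappa_4=l$. Since $u_m\gtrsim k^{-1}$ (resp.\ $l^{-1}$) on the contours, $u_m^{-3/2}\le\kappa_m u_m^{-1}$, so the dominant case is $b=0$, giving $\beta_2\beta_3\beta_4=kl^2(u_2u_3u_4)^{-1}$ and $\partial_{ij}^4 f\prec N^{-3}kl^2(u_2u_3u_4)^{-1}$.

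Finally I would assemble the power count. Since $\bb E|H_{ij}|^5=O(N^{-5/2})$, the second term of \eqref{R} is $\prec\frac1N\cdot N^2\cdot N^{-5/2}\cdot N^{-3}kl^2(u_2u_3u_4)^{-1}=N^{-9/2}kl^2(u_2u_3u_4)^{-1}$, so, using $|1/s_1|\asymp u_1^{-1}$,
\[
\cal R/s_1\ \prec\ \frac{kl^2}{N^{9/2}\,u_1u_2u_3u_4}\,.
\]
On the other hand $t_{i,j}\ge (u_iu_j)^{-2}$, hence $\cal E\ge kl\,N^{-4-\zeta}(u_1u_2u_3u_4)^{-2}$, and therefore
\[
\frac{\cal R/s_1}{\cal E}\ \lesssim\ l\,(u_1u_2u_3u_4)\,N^{\zeta-1/2}\ \lesssim\ N^{\alpha+\zeta-1/2}(\log N)^2\,,
\]
using $u_i=O(1)$ and $l\le n_\omega\le N^{\alpha}(\log N)^2$. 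Since $\zeta=(\tfrac13-\alpha)\wedge\alpha$ one has $\alpha+\zeta\le\tfrac13<\tfrac12$ for all $\alpha\in(0,\tfrac13)$, so this ratio is $\ll N^{-\tau}$ for some $\tau>0$; the cases $b\ge1$ are strictly smaller by the remark above. This gives $\cal R/s_1\prec\cal E$. The main obstacle is the analysis in the second paragraph: one must enumerate the terms of $\partial_{ij}^4 f$ carefully enough to see that differentiating the traces never raises the power of a resolvent beyond two (so that only the entry estimate $(S^2)_{\b w\b v}\prec u_2^{-3/2}$ of Lemma \ref{lem for s_1} is needed), and that, after the $\tfrac1N\sum_{i,j}$ and the factor $\bb E|H_{ij}|^5$, every resulting monomial lies below $\cal E$; the truncation of the remainder and the perturbation argument for $H^{ij}+x\Delta^{ij}$ are routine.
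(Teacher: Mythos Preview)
Your proof is correct and follows essentially the same route as the paper: choose $t=N^{-1/2+\epsilon}$ in \eqref{R}, kill the first term via the deterministic bound $|\partial_{ij}^4 f|\le N^{C}$ together with the superpolynomial decay of $\bb E|H_{ij}^{10}\mathbf{1}(|H_{ij}|>t)|$, extend the local-law bounds of Lemma~\ref{lem for s_1} to $H^{ij}+x\Delta^{ij}$ by a resolvent-perturbation argument, and then bound $\sup_{|x|\le t}|\partial_{ij}^4 f|\prec N^{-3}kl^2(u_2u_3u_4)^{-1}$ to get $\cal R/s_1\prec kl^2N^{-9/2}(u_1u_2u_3u_4)^{-1}\prec\cal E$. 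The only cosmetic difference is that the paper uses the slightly cruder perturbed bound $|(G^2)_{xy}|\prec k/u_1$ (see \eqref{5.14}) in place of your $u_1^{-3/2}$, which leads to the same final estimate; your structural observation that $\partial_{ij}^c\ul{S}$ always contains exactly one $(S^2)$-entry is a clean way to see the required bound.
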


\begin{proof}
	Recall that $\Delta^{ij}\in \bb C^{N\times N}$ such that $\Delta^{ij}_{xy}=(\delta_{xi}\delta_{jy}+\delta_{xj}\delta_{iy})(1+\delta_{xy})^{-1}$, and  ${H}^{ij}\deq H- H_{ij}\Delta^{ij}$.
	Fix $i,j$ and set $\Theta=H_{ij}\Delta^{ij}$, $\widehat{G}\equiv \widehat{G}(z_1)\deq (H^{ij}-z_1)^{-1}$. We omit the $i,j$ dependence from the notations $\Theta$ and $\widehat{G}$ for convenience.   By resolvent expansion we have 
	\[
	\widehat{G}=G+\sum_{k=1}^{99}(G\Theta)^kG+(G\Theta)^{100}\widehat{G}\,.
	\]
	Note that at most two entries of $\Theta$ are nonzero, and they are stochastically dominated by $N^{-1/2}$. Together with the trivial bound $\max_{x,y \in \qq{1,N}}|\widehat{G}_{xy}|\leq N^{10}$ and Lemma \ref{lem for s_1} we have
	\begin{equation} \label{baolan}
	|\ul{\widehat{G}}(z_1)-m(z_1)| \prec |\ul{G}(z_1)-m(z_1)|+N^{-3/2}\max_{x,y \in \qq{1,N}}|(G^2)_{xy}|+N^{-50}\max_{x,y \in \qq{1,N}}|\widehat{G}_{xy}| \prec \frac{k}{Nu_1}
	\end{equation}
as well as
	\begin{equation} \label{A.11}
 \max_{x,y \in \qq{1,N}} |\widehat{G}_{xy}(z_1)-\delta_{xy}m(z_1)| \prec  \max_{x,y \in \qq{1,N}} |{G}_{xy}(z_1)-\delta_{xy}m(z_1)| +N^{-1/2} +N^{-50}\max_{x,y \in \qq{1,N}}|\widehat{G}_{xy}| \prec \sqrt{\frac{k}{Nu_1}}
	\end{equation}
	uniformly for $z_1 \in \widehat{\gamma}_{1,k}$. Using \eqref{A.11} and the fact that $\widehat{G}$ is independent of $H_{ij}$, we have
	\begin{equation} \label{AAA}
	\max_{x,y \in \qq{1,N}}\sup_{|H_{ij}|\leq N^{-1/2+\zeta}} |\widehat{G}_{xy}| \prec 1\,.
	\end{equation} 
Further, we apply the resolvent expansion formula which also says
	\begin{equation} \label{eqn: A.12}
	G=\widehat{G}-(\widehat{G}\Theta)\widehat{G}+(\widehat{G}\Theta)^2G\,.
	\end{equation}
By \eqref{AAA} and \eqref{eqn: A.12}, it is easy to check the bound
	\[
	\max_{x,y \in \qq{1,N}}\sup_{|H_{ij}|\leq N^{-1/2+\zeta}} |G_{xy}| \prec 1+ N^{-1+2\zeta} \max_{x,y \in \qq{1,N}}\sup_{|H_{ij}|\leq N^{-1/2+\zeta}} |G_{xy}|\,,
	\]
	which further implies
	\[
	\max_{x,y \in \qq{1,N}}\sup_{|H_{ij}|\leq N^{-1/2+\zeta}} |G_{xy}| \prec 1\,.
	\]
	Inserting the above into the RHS of \eqref{eqn: A.12} and applying \eqref{baolan}, \eqref{A.11}, we have
	\begin{equation} \label{3.67}
	\max_{x \ne y \in \qq{1,N}}\sup_{|H_{ij}|\leq N^{-1/2+\zeta}}  |{G}_{ x  y}| \prec \sqrt{\frac{k}{Nu_1}}, \quad  \quad \max_{x \in \qq{1,N}} \sup_{|H_{ij}|\leq N^{-1/2+\zeta}}  |{G}_{ x  x}|\prec 1\,,
	\end{equation}
	\begin{equation} \label{5.133}
	\sup_{|H_{ij}|\leq N^{-1/2+\zeta}} |\ul{G}-m(z_1)|\prec \frac{k}{Nu_1}\,,
	\end{equation}
	and
	\begin{equation} \label{5.14}
	\max_{ x,  y \in \qq{1,N}} \sup_{|H_{ij}|\leq N^{-1/2+\zeta}}|(G^2)_{xy}| \prec \frac{k}{u_1}\,.
	\end{equation}
	Similar bounds also hold when $G$ is replaced by $S,T,V$. By setting $t=N^{-1/2+\zeta}$ in \eqref{R}, we see that
	\begin{multline} \label{tau1}
	\cal R/s_1 \prec \cal R /u_1 \prec \frac{1}{Nu_1}\sum_{i,j} \bigg(\E  \sup_{|x| \le |H_{ij}|}\big| \partial_{ij}^{4}f({H}^{ij}+x\Delta^{ij})\big|^2\cdot\E\big| {H^{10}_{ij}}\mathbf{1}(|H_{ij}|>N^{-1/2+\zeta})\big|\bigg)^{1/2}\\ +\frac{1}{Nu_1}\sum_{i,j} \E |H_{ij}|^{5} \cdot  \E { \sup_{|x| \le N^{-1/2+\zeta}}\big| \partial_{ij}^{4}f({H}^{ij}+x\Delta^{ij})\big|}\,,
 	\end{multline}
	where we recall $f\equiv f_{ij}=G_{ji}\big\la\langle \underline{S}\rangle \langle \underline{T}\rangle \langle \underline{V}\rangle\big\ra$. Let us first estimate the second term on the RHS of \eqref{tau1}. By our assumption $\mathbb{E}|H_{ij}|^a=O(N^{-a/2})$,  this term is bounded by
	\begin{equation} \label{rise}
	O_{\prec}(N^{-3/2}u_1^{-1})\cdot \max_{i,j\in \qq{1,N}} \E { \sup_{|x| \le N^{-1/2+\zeta}}\big| \partial_{ij}^{4}f({H}^{ij}+x\Delta^{ij})\big|}\,.
	\end{equation}
Using \eqref{eqn:diff} and \eqref{3.67} -- \eqref{5.14}, we have
\[
\sup_{|H_{ij}|\leq N^{-1/2+\zeta}}|\partial^n_{ij}G_{ij}| \prec 1\,, \quad \sup_{|H_{ij}|\leq N^{-1/2+\zeta}}|\partial^n_{ij}\langle \ul{S} \rangle | \prec \frac{k}{Nu_2}, \quad \quad \sup_{|H_{ij}|\leq N^{-1/2+\zeta}}|\partial^n_{ij}\langle \ul{T} \rangle | \prec \frac{l}{Nu_3}
\]
for all fixed $n \in \bb N$, and the same bound holds if we replace $T$ by $V$ in the last inequality.  These bounds further imply
\[
\sup_{|x| \le N^{-1/2+\zeta}}\big| \partial_{ij}^{4}f({H}^{ij}+x\Delta^{ij})\big| \prec \frac{kl^2}{N^3u_2u_3u_4} \,.
\] 
Using the above bound and Lemma \ref{prop_prec}, we see that \eqref{rise} is bounded by
	\[
	O_{\prec}(N^{-3/2}u_1^{-1})\cdot  \frac{kl^2}{N^3u_2u_3u_4} \prec \frac{kl}{N^{4+\zeta}u_1u_2u_3u_4} \prec \cal E
	\]
	as desired. Further, note that $\max_{i,j}|H_{ij}| \prec  N^{-1/2}$, which by definition implies $\mathbb P(|H_{ij}|>N^{-1/2+\zeta})=O(N^{-D})$ for any fixed $D>0$. Also recall our moment bound $\bb E|H_{ij}|^a=O(N^{-a/2})$ for all $i,j$ and all fixed $a$. By Cauchy-Schwarz inequality we have
	\begin{equation} \label{theshy}
	\bb E \big| {H^{10}_{ij}}\mathbf{1}(|H_{ij}|>N^{-1/2+\zeta})\big| =O(N^{-5-D/2}) 
	\end{equation}
	for any fixed $D>0$. Moreover, as we mentioned earlier, since $z_1 \in \widehat{\gamma}_1$, we have the trivial deterministic bound 
	\[
	\sup_{i,j}|G_{ij}(z_1)| \leq \|G(z_1)\| \leq |\eta_1|^{-1} \leq N^{10}
	\]
which  together with \eqref{eqn:diff}  implies
	\begin{equation} \label{rookie}
	  \sup_{|x| \le |H_{ij}|}\big| \partial_{ij}^{4}f({H}^{ij}+x\Delta^{ij})\big|^2 \leq  \sup_{x \in \bb R}\big| \partial_{ij}^{4}f({H}^{ij}+x\Delta^{ij})\big|^2 =O(N^{1000})\,.
	\end{equation}
	Using  \eqref{rookie} and \eqref{theshy} with sufficiently large $D$, the first term on the RHS of \eqref{tau1} can be easily bounded by $O(N^{-10})$. This finishes the proof.
\end{proof}

\begin{rem} \label{rmk:5.4}
	The method presented in Lemma \ref{lem:5.2} of treating the remainder term was introduced in \cite[Lemma 4.6]{HK16}, and it is generally effective in  estimating the remainder terms from cumulant expansions. In particular, the method applies to $\cal R^{(1)}$, $\cal R^{(2,ij)}$, $\cal R^{(3,ij)}$,$\cal R^{(4)}, \cal R^{(5)}$, and $\cal R^{(6)}$ in the sequel, and we shall omit the details of the estimation for those terms.
\end{rem}

\begin{lem} \label{lem:5.4}
	The first three terms on the RHS of \eqref{5.5} are bounded by $O_{\prec}(\cal E)$.
\end{lem}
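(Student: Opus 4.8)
The three terms on the right of \eqref{5.5} at issue are
\[
(\mathrm a)=\frac{\bb E\,\langle\underline G\rangle^2\langle\underline S\rangle\langle\underline T\rangle\langle\underline V\rangle}{-4s_1},\qquad
(\mathrm b)=\frac{\bb E\,\langle\underline G\rangle^2\cdot\bb E\,\langle\underline S\rangle\langle\underline T\rangle\langle\underline V\rangle}{4s_1},\qquad
(\mathrm c)=\frac{\bb E\,\langle\underline{G^2}\rangle\langle\underline S\rangle\langle\underline T\rangle\langle\underline V\rangle}{-4Ns_1}.
\]
The plan is to bound each by $O_{\prec}(\cal E)$, combining the prefactor estimate $1/|s_1|\asymp 1/u_1\le k$ from Lemma \ref{lem for s_1} with sharp estimates for the centered correlation functions that occur. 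The guiding point is that $(\mathrm a)$ carries five centered traces (counting $\langle\underline G\rangle^2$ as two), $(\mathrm c)$ four, and $(\mathrm b)$ is a product of a two-fold and a three-fold centered expectation; in each case this is strictly more centering than survives in the leading terms $f(z_1,z_3)f(z_2,z_4)+\cdots$ of \eqref{19052411}, and every extra centered factor gains a power of $N^{-1}$ at the cost of only a bounded power of $\max\{k,l\}\le n_\omega$. Since $n_\omega\le N^{1/3}$ and $\cal E$ carries the spare factor $N^{-\zeta}$, these gains close the bound.

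Concretely, $(\mathrm b)$ is the simplest: estimate $\bb E\langle\underline G\rangle^2\prec k^2N^{-2}u_1^{-2}$ from $\langle\underline G\rangle\prec k/(Nu_1)$, and the three-point factor by H\"older, $\bb E\langle\underline S\rangle\langle\underline T\rangle\langle\underline V\rangle\le(\bb E|\langle\underline S\rangle|^3)^{1/3}(\bb E|\langle\underline T\rangle|^3)^{1/3}(\bb E|\langle\underline V\rangle|^3)^{1/3}\prec N^{-3}u_2^{-2}u_3^{-2}u_4^{-2}$, so that after inserting $1/|s_1|\asymp 1/u_1$ this term is $O_{\prec}(k^2N^{-5}u_1^{-3}u_2^{-2}u_3^{-2}u_4^{-2})$, which is $\prec\cal E$ using the pointwise lower bound $\cal E\gtrsim kl\,N^{-4-\zeta}u_1^{-2}u_2^{-2}u_3^{-2}u_4^{-2}$. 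For $(\mathrm a)$ I would split with Cauchy--Schwarz (or H\"older), $\bb E\langle\underline G\rangle^2\langle\underline S\rangle\langle\underline T\rangle\langle\underline V\rangle\le(\bb E\langle\underline G\rangle^4)^{1/2}(\bb E\langle\underline S\rangle^2\langle\underline T\rangle^2\langle\underline V\rangle^2)^{1/2}$, and use the standard high-moment comparison $\bb E\langle\underline X\rangle^{2p}\prec(\bb E\langle\underline X\rangle^2)^p$ for $X\in\{G,S,T,V\}$ together with the sharp two-point bound $\bb E\langle\underline G(z_1)\rangle^2=O_{\prec}(N^{-2}u_1^{-4})$ (the diagonal case of \eqref{19052410}) and its analogues for $z_2,z_3,z_4$; this gives $\bb E\langle\underline G\rangle^4\prec N^{-4}u_1^{-8}$ and, via H\"older, $\bb E\langle\underline S\rangle^2\langle\underline T\rangle^2\langle\underline V\rangle^2\prec N^{-6}u_2^{-4}u_3^{-4}u_4^{-4}$, whence $(\mathrm a)=O_{\prec}(N^{-5}u_1^{-5}u_2^{-2}u_3^{-2}u_4^{-2})$, which is $\prec\cal E$ because the ratio to the lower bound above is $u_1^{-3}k^{-1}l^{-1}N^{-1+\zeta}\le k^2N^{-1+\zeta}\ll 1$. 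Term $(\mathrm c)$ is handled the same way after writing $\underline{G^2}=\partial_{z_1}\underline G$: a Cauchy-integral estimate along $\widehat{\gamma}_1$, where (by construction of the contour and $k\ll N^{1/3}$) the relevant functions of $z_1$ are analytic in a disc of radius $\gtrsim k^{-1}u_1$, turns the two-point and high-moment bounds for $\langle\underline G\rangle$ into $\bb E\langle\underline{G^2}\rangle^{2p}\prec(k^2N^{-2}u_1^{-6})^p$, and the explicit $N^{-1}$ in front of $(\mathrm c)$ then leaves ample room against $\cal E$.

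The step I expect to be the real obstacle is that the naive local-law bound $\langle\underline G\rangle\prec k/(Nu_1)$ --- which near the spectral edge is as large as $k^2/N$ --- is by itself too lossy: used throughout, it only closes the estimate for $\alpha<2/9$, not for all $\alpha\in(0,1/3)$. One genuinely needs the variance-type cancellation $\bb E\langle\underline G\rangle^2\asymp N^{-2}u_1^{-4}$ in place of $k^2N^{-2}u_1^{-2}$, i.e.\ precisely the diagonal case of the two-point estimate \eqref{19052410}. The clean logical order is therefore to establish \eqref{19052410} first --- the simpler half of Proposition \ref{pro.19052401}, whose proof is omitted in the excerpt --- and only then feed it into the error analysis of \eqref{5.5}. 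A secondary, routine point is that the high-moment factorizations and the three-point bound invoked above are not exact for the non-Gaussian $H$; this is handled either by quoting the already-available lower-point estimates or by the short auxiliary cumulant expansions, controlled exactly as in Section \ref{sec 5.1} and Lemma \ref{lem:5.2} (see also Remark \ref{rmk:5.4}).
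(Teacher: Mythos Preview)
Your plan has a genuine gap at the step you yourself flag as the obstacle, and the proposed fix does not close it.

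You want to run Cauchy--Schwarz/H\"older and then invoke two inputs: (i) the ``diagonal case of \eqref{19052410}'' to get $\bb E\langle\underline G\rangle^2=O_\prec(N^{-2}u_1^{-4})$, and (ii) a ``standard high-moment comparison'' $\bb E\langle\underline X\rangle^{2p}\prec(\bb E\langle\underline X\rangle^2)^p$. Neither is available at this point. Estimate \eqref{19052410} is proved only for $z_1\in\widehat\gamma_1$, $z_2\in\widehat\gamma_2$ on \emph{separated} contours; its error term $k\,t_{1,2}N^{-2-\zeta}$ blows up as $z_2\to z_1$, so there is no diagonal case to quote. More importantly, (ii) is exactly an approximate Wick factorization for $\langle\underline G\rangle$, i.e.\ the content of Proposition~\ref{prop: centered} specialized to coinciding arguments. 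The local law only yields $\bb E|\langle\underline G\rangle|^{2p}\prec(k/(Nu_1))^{2p}$; upgrading this to $(N^{-2}u_1^{-4})^p$ is precisely what the iterated cumulant expansions of Section~\ref{sec 5} accomplish, and Lemma~\ref{lem:5.4} is one of the building blocks. So the route ``prove \eqref{19052410} first, then feed it back'' still does not supply the fourth- and sixth-moment bounds you need after Cauchy--Schwarz; you would have to redo the full $2p$-point analysis at coinciding arguments, which is at least as hard as the lemma itself. The remark that these factorizations are ``routine'' via ``short auxiliary cumulant expansions'' understates the work: those expansions are exactly the proof.

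The paper avoids all of this by \emph{not} decoupling with moment inequalities. Instead it opens up $\langle\underline V\rangle$ by a second cumulant expansion (writing $z_4V=HV-I$), obtaining \eqref{5.7}. Each term there carries either an extra centered factor $\langle\underline V\rangle$ (gaining $l/(Nu_4)$) or an explicit product such as $\underline{VG^2}$, $\underline{VS^2}$, $\underline{VT^2}$, which is controlled by the resolvent identity estimate \eqref{19082960}. Together with the prefactor $1/(s_1s_4)\asymp 1/(u_1u_4)$ this closes against $\cal E$ using \emph{only} Lemma~\ref{lem for s_1}; no two-point or higher-moment input is needed. That iterative gain-of-a-factor mechanism is the missing idea in your proposal.
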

\begin{proof}
	We only show the details for the first term on the RHS of \eqref{5.5}. The proof for the other two terms is similar. Applying $z_4V=HV-I$ and the cumulant expansion in Lemma \ref{lem:cumulant_expansion}, we can get
	\begin{multline} \label{5.7}
	\frac{1}{-4s_1}\mathbb{E} \la\underline{G}\ra^2 \big\langle \underline{S}\rangle \langle \underline{T}\rangle \langle \underline{V}\rangle=  \frac{1}{16s_1s_4}\bigg( \mathbb{E} \la\underline{G}\ra^2 \big\langle \underline{S}\rangle \langle \underline{T}\rangle \langle \underline{V}\rangle^2 -\bb E\la\underline{V}\ra^2\bb E  \langle \underline{G}\rangle^2\langle \underline{S}\rangle \langle \underline{T}\rangle\\+\frac{1}{N}\mathbb{E}\la \underline{G}\ra^2 \langle \underline{S}\rangle \langle \underline{T}\rangle \langle \underline{V^2}\rangle  +\frac{4}{N^2}\mathbb{E} \underline{VG^2}\langle\ul{G}\rangle\langle \underline{S}\rangle\langle \underline{T}\rangle
	+\frac{2}{N^2}\mathbb{E} \underline{VS^2}\langle \underline{G}\rangle^2 \langle \underline{T}\rangle+\frac{2}{N^2}\mathbb{E} \underline{VT^2}\langle \underline{G}\rangle^2 \langle \underline{S}\rangle-4\cal R^{(1)}\bigg)\,.
	\end{multline}
	By Lemmas \ref{prop_prec} and \ref{lem for s_1}, it is easy to see that the first three terms on the RHS of \eqref{5.7} are bounded by $O_{\prec}(\cal E)$. Further, note that
	\[
	\frac{m(z_1)-m(z_4)}{z_1-z_4} \prec 1+ \frac{|\sqrt{z_1^2-1}|+|\sqrt{z_4^2-1}|}{|z_1-z_4|}\prec 1+(u_1+u_4)\Big(\frac{k}{u_1}\wedge\frac{l}{u_4}\Big) \prec k+l
	\]
	uniformly for $z_1 \in \widehat{\gamma}_1$ and $z_4 \in \widehat{\gamma}_4$. This together with Lemma \ref{lem for s_1} and resolvent identity leads to
	\begin{multline}  \label{19082960}
	\ul{VG^2}=\frac{\ul{V}-\ul{G}}{(z_4-z_1)^2}-\frac{\ul{G^2}}{z_4-z_1} = \frac{m(z_4)-m(z_1)}{(z_4-z_1)^2}+\frac{\ul{V}-m(z_4)+m(z_1)-\ul{G}}{(z_1-z_4)^2}-\frac{\ul{G^2}}{z_4-z_1}\\
	\prec \frac{k+l}{|z_1-z_4|}+\frac{1}{|z_1-z_4|^2}\Big(\frac{k}{Nu_1}+\frac{l}{Nu_4}\Big)+\frac{1}{|z_1-z_4|u_1} \prec \frac{k+l}{|z_1-z_4|}
	\end{multline}
	uniformly for $z_1\in \widehat{\gamma}_1$ and $z_4 \in \widehat{\gamma}_4$. Hence by Lemmas \ref{prop_prec} and \ref{lem for s_1}, the fourth term on the RHS of \eqref{5.7} is bounded by
	\[
	\frac{1}{u_1u_4}\frac{1}{N^2}\frac{k+l}{|z_1-z_4|}\frac{k^2l}{N^3u_1u_2u_3}\prec \cal E\,.
	\]
	Similarly, the fifth and sixth terms on the RHS of \eqref{5.7} are also bounded by $O_{\prec}(\cal E)$. As in Lemma \ref{lem:5.2}, we can apply Lemma \ref{lem for s_1} to show that
	\[
	\frac{\cal R^{(1)}}{s_1s_4} \prec \frac{1}{u_1u_4} \cdot \frac{k^3l}{N^4u_1^2u_2u_3}\cdot \frac{1}{N^{1/2+1/6}} \prec \cal E\,.
	\]
	This completes the proof.
\end{proof}

\begin{lem} \label{lem:5.5}
	Let $L_2$ be as in (\ref{20070601}), we have
	\begin{equation*} 
	L_2/s_1 \prec \cal E\,.
	\end{equation*}
	\begin{proof}  By definition in (\ref{20070601}) and the differential rule \eqref{eqn:diff}, it is elementary to compute
		\begin{equation} \label{5.20}
		\begin{aligned}
		L_{2}/s_1=&\,\frac{1}{2s_1N}\sum_{i,j}\cal C_{3}(H_{ij})(1+\delta_{ij})^{-2}  \bb E\bigg( \la 6G_{ii}G_{jj}G_{ij}\rangle \langle \underline{S}\rangle\langle \underline{T}\rangle \langle \underline{V}\rangle+\langle 2G^3_{ij}\ra \langle \underline{S}\rangle \langle \underline{T}\rangle \langle \underline{V}\rangle\\
		&+\frac{4}{N}(G_{ii}G_{jj}+G_{ij}^2)\Big((S^2)_{ij} \langle \ul{T} \rangle \langle \ul{V} \rangle+(T^2)_{ij} \langle \ul{S} \rangle \langle \ul{V} \rangle+(V^2)_{ij} \langle \ul{S} \rangle \langle \ul{T} \rangle\Big)\\		
		&+\frac{2}{N}G_{ij}\Big((S^2)_{ii}S_{jj}+(S^2)_{jj}S_{ii}+2(S^2)_{ij}S_{ij}\Big)\langle \underline{T}\rangle \langle \underline{V}\rangle\\
		&+\frac{2}{N}G_{ij}\Big((T^2)_{ii}T_{jj}+(T^2)_{jj}T_{ii}+2(T^2)_{ij}T_{ij}\Big)\langle \underline{S}\rangle \langle \underline{V}\rangle\\
		&+\frac{2}{N}G_{ij}\Big((V^2)_{ii}V_{jj}+(V^2)_{jj}V_{ii}+2(V^2)_{ij}V_{ij}\Big)\langle \underline{S}\rangle \langle \underline{T}\rangle\\
		&+\frac{8}{N}G_{ij}\Big((S^2)_{ij}T^2_{ij}\langle \ul{V} \rangle +(S^2)_{ij}V^2_{ij}\langle \ul{T} \rangle +(T^2)_{ij}V^2_{ij}\langle \ul{S} \rangle \Big)\bigg)\,.
			\end{aligned}
		\end{equation} 
	The most dangerous terms on the RHS of the above equation appear in the second line, for example
		\begin{align*}
		L_{2,1}/s_1 &\deq \frac{2}{s_1N^2}\sum_{i,j} \cal C_3(H_{ij})(1+\delta_{ij})^{-2} \bb E G_{ii}G_{jj} (S^2)_{ij} \langle \ul{T} \rangle \langle \ul{V} \rangle\nonumber\\
		&=\frac{2}{s_1N^{7/2}}\sum_{i,j} a_{ij} \bb E G_{ii}G_{jj} (S^2)_{ij} \langle \ul{T} \rangle \langle \ul{V} \rangle\,,
		\end{align*}
		where in the second step we write $a_{ij}\deq N^{3/2} \cal C_{3}(H_{ij})(1+\delta_{ij})^{-2}$.  Observe that $\max_{i,j}|a_{ij}|=O(1)$. Applying $z_4V=HV-I$ and the cumulant expansion in Lemma \ref{lem:cumulant_expansion}, we obtain
		\begin{multline} \label{5.13}
		L_{2,1}/s_1=\frac{1}{-2s_1s_4N^{7/2}}\sum_{i,j}a_{ij}\bigg( \bb E G_{ii}G_{jj} (S^2)_{ij} \langle \ul{T} \rangle \langle \ul{V} \rangle^2 -\bb E\la\underline{V}\ra^2\bb E G_{ii}G_{jj} (S^2)_{ij} \langle \ul{T} \rangle \\+\frac{1}{N}\bb E G_{ii}G_{jj} (S^2)_{ij} \langle \ul{T} \rangle \langle \underline{V^2}\rangle+\frac{2}{N^2}\bb E\ul{VT^2} G_{ii}G_{jj} (S^2)_{ij}+\frac{2}{N^2}(G^2V)_{ii}G_{jj}(S^2)_{ij} \langle \ul{T}\rangle \\+\frac{2}{N^2}(G^2V)_{jj}G_{ii}(S^2)_{ij} \langle \ul{T}\rangle
		+ \frac{4}{N^2}(S^3V)_{ij}G_{ii}G_{jj} \langle \ul{T} \rangle -4\cal R^{(2,ij)}\bigg)\,.
		\end{multline}
		
		We can rewrite the first term on the RHS of the above equation as
	\begin{multline} \label{xianliang}
		\frac{-1}{2s_1s_4N^{7/2}}\sum_{i,j}a_{ij}\bb E \Big((G_{ii}-m(z_1))(G_{jj}-m(z_1))+m(z_1)(G_{ii}-m(z_1))\\+m(z_1)(G_{jj}-m(z_1))+m(z_1)^2\Big) (S^2)_{ij} \langle \ul{T} \rangle \langle \ul{V} \rangle^2\,.
	\end{multline}
	By Lemmas \ref{prop_prec} and \ref{lem for s_1}, the first term in \eqref{xianliang} is stochastically dominated by
	\[
	\frac{1}{s_1s_4N^{7/2}} \cdot N^2 \cdot \sqrt{\frac{k}{u_1N}} \cdot \sqrt{\frac{k}{u_1N}} \cdot \frac{1}{u_2} \cdot \frac{l^3}{u_3u^2_4N^3} \asymp \frac{kl^3}{u_1^2u_2u_3u^3_4N^{11/2}}\prec \frac{kl}{u_1^2u_2u_3u^2_4N^{4+\zeta}}\leq \cal E\,.
	\]
	In order to estimate the others terms in \eqref{xianliang}, we first define the vector $\b v^j=(v^j_1, \ldots, v_N^j)^{\top}$ by $v_i^j\deq N^{-1/2}a_{ij}$, for convenience. Note that  we have $\sup_j\|\b v^j\|_2=O(1)$.  Then by Lemma \ref{lem for s_1} we have
	\begin{equation} \label{5.222}
	\sum_i a_{ij} S_{ij}=N^{1/2} \langle \overline{\b v}^j, S \b e_j\rangle  \prec N^{1/2}
	\end{equation}
and
	\begin{equation} \label{5.23}
\sum_i a_{ij} (S^2)_{ij}=N^{1/2} \langle \overline{\b v}^j, (S^2) \b e_j\rangle  \prec \frac{N^{1/2}}{u_2^{3/2}}
	\end{equation}
as well as
	\begin{equation} \label{5.24}
	\sum_i a_{ij} (S^3)_{ij}=\sum_{i,k}a_{ij}(S^2)_{ik}S_{kj}=\sum_{k:k\ne j}N^{1/2} \langle \overline{\b v}^j, (S^2) \b e_k\rangle S_{kj}+N^{1/2} \langle \overline{\b v}^j, (S^2) \b e_j\rangle S_{jj}\prec \frac{Nk^{1/2}}{u_2^2}\,.
	\end{equation}
	Using \eqref{5.23}, the last term in \eqref{xianliang} is stochastically dominated by
	\[
	\frac{1}{s_1s_4N^{7/2}}\cdot N\cdot \frac{N^{1/2}}{u_2^{3/2}}\cdot \frac{l^3}{u_3u_4N^3} \asymp \frac{l^3}{u_1u_2^{3/2}u_3u_4^3N^5}\prec \frac{l}{u_1u_2^{3/2}u_3u_4^2N^{4+\zeta}} \leq \cal E\,.
	\]
	The second and third term in \eqref{xianliang} can be estimated in the same way. This implies
	\[
	\eqref{xianliang} \prec \cal E\,.
	\]
	This concludes the estimate of the first term  on the RHS of \eqref{5.13}. 
   In the same way, the second and third terms on the RHS of \eqref{5.13} can also bounded by $O_{\prec}(\cal E)$. 
   
   The estimates for the fourth to seventh terms on RHS of \eqref{5.13} follows a similar fashion. As in \eqref{19082960}, we can show that
   \begin{equation} \label{db}
   \ul{VT^2} \prec \frac{l}{|z_3-z_4|}\,.
    \end{equation}
  By Lemma \ref{lem for s_1}, \eqref{5.23} and \eqref{db}, together with the identity
  \begin{multline*}
   \ul{VT^2} G_{ii}G_{jj} (S^2)_{ij}
  \\ =\ul{VT^2}
   \Big((G_{ii}-m(z_1))(G_{jj}-m(z_1))+m(z_1)(G_{ii}-m(z_1))+m(z_1)(G_{jj}-m(z_1))+m(z_1)^2\Big) (S^2)_{ij}\,,
    \end{multline*}
   we can bound the fourth term on RHS of \eqref{5.13} by $O_{\prec}(\cal E)$. The fifth term on RHS of \eqref{5.23} can be rewritten into
   \begin{multline*} 
   \frac{-1}{s_1s_4N^{11/2}}\sum_{i,j}a_{ij}\bb E\bigg[ \bigg(\frac{(G^2)_{ii}}{z_1-z_4}-\frac{G_{ii}-m(z_1)+m(z_4)-V_{ii}}{(z_1-z_4)^2}+\frac{m(z_1)-m(z_4)}{(z_1-z_4)^2} \bigg)\\
   \times \big(m(z_1)+G_{jj}-m(z_1)\big)(S^2)_{ij} \langle\ul{T} \rangle \bigg]
   \end{multline*}
    which is bounded by $O_{\prec}(\cal E)$ using Lemma \ref{lem for s_1} and \eqref{5.23}. The sixth term on RHS of  \eqref{5.13} equals  the fifth term. Using the identity
    \begin{multline*}
    (S^3V)_{ij}G_{ii}G_{jj} \langle \ul{T} \rangle=\bigg(\frac{(S^3)_{ij}}{z_2-z_4}-\frac{(S^2)_{ij}}{(z_2-z_4)^2}+\frac{S_{ij}-V_{ij}}{(z_2-z_4)^3}\bigg)\\
    \times\Big((G_{ii}-m(z_1))(G_{jj}-m(z_1))+m(z_1)(G_{ii}-m(z_1))+m(z_1)(G_{jj}-m(z_1))+m(z_1)^2\Big) \langle \ul{T} \rangle 
     \end{multline*}
     together with  Lemma \ref{lem for s_1}, \eqref{5.222} -- \eqref{5.24}, as well as the bound
     \[
     \frac{1}{|z_2-z_4|} =O(k^2 \wedge l^2)\,,
     \]
      the seventh term on RHS of \eqref{5.13} can also be bounded by $O_{\prec}(\cal E)$. Similar to the proof of Lemma \ref{lem:5.2}, we can apply Lemma \ref{lem for s_1} to show that
		\[
		\frac{1}{s_1s_4N^{7/2}}\sum_{i,j}a_{ij}\cal R^{(2,ij)} \prec \cal E\,.
		\]
		Thus we conclude the estimate of $L_{2,1}/s_1$. 
		
		 Similarly we can show that
		\[
		\frac{2}{s_1N^2}\sum_{i,j} \cal C_3(H_{ij})(1+\delta_{ij})^{-2} \bb E G_{ii}G_{jj}\big( (T^2)_{ij} \langle \ul{S} \rangle \langle \ul{V} \rangle+(V^2)_{ij} \langle \ul{S} \rangle \langle \ul{T} \rangle\big) \prec \cal E\,.
		\] 
		The estimates for other terms on the RHS of \eqref{5.20} are easier, namely one only needs to apply Lemma \ref{lem for s_1} and no further cumulant expansion is needed. For example, the first term on the RHS of \eqref{5.20} can be rewritten into
		\begin{multline*}
		\frac{3}{s_1N}\sum_{i,j}\cal C_{3}(H_{ij})(1+\delta_{ij})^{-2}  \bb E\Big((G_{ii}-m(z_1))(G_{jj}-m(z_1))+m(z_1)(G_{ii}-m(z_1))\\+m(z_1)(G_{jj}-m(z_1))+m(z_1)^2\Big) G_{ij} \langle \ul{S} \rangle\langle \ul{T} \rangle \langle \ul{V} \rangle^2\,.
		\end{multline*}
		Similar as in \eqref{xianliang}, the above can be easily bounded by $O_{\prec}(\cal E)$ using Lemma \ref{lem for s_1}. The second term on the RHS of \eqref{5.20} is stochastically dominated by
		\[
		\frac{1}{u_1N}\cdot N^{-3/2}\Big(N+\sum_{i,j: i\ne j} \bb E |G^3_{ij}|\Big) \cdot \frac{kl^2}{u_2u_3u_4N^3} \prec\frac{kl^2}{u_1u_2u_3u_4N^{9/2}}+ \frac{k^{5/2}l^2}{u_1^{5/2}u_2u_3u_4N^5}\prec \cal E\,.
		\]   
		One readily checks that all terms on the RHS of \eqref{5.20} are bouned by $O_{\prec}(\cal E)$. This completes the proof.
	\end{proof}
\end{lem}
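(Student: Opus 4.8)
The plan is to differentiate $L_2$ out explicitly, split the resulting finite list of terms into a few ``dangerous'' ones requiring a second cumulant expansion and many ``harmless'' ones that can be bounded directly, and then power‑count every term against $\cal E$. First, starting from the definition of $L_2$ in \eqref{20070601} with second derivative $\partial_{ij}^2$ acting on $f_{ij}=G_{ji}\la\langle\underline S\rangle\langle\underline T\rangle\langle\underline V\rangle\ra$, I would apply the differential rule \eqref{eqn:diff} repeatedly, together with $\partial_{ij}\langle\underline S\rangle=-\tfrac2N(S^2)_{ij}(1+\delta_{ij})^{-1}$ and its analogues for $T,V$. This produces an explicit finite sum; each summand carries a prefactor $\tfrac1N\cal C_3(H_{ij})=O(N^{-5/2})$ by Lemma~\ref{Tlemh}, which summed over the $N^2$ pairs $(i,j)$ is of bare size $N^{-1/2}$, so the required extra smallness must come from the remaining factors. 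The terms split according to whether an explicit off‑diagonal entry ($G_{ij}$, $G^3_{ij}$, $(S^2)_{ij}S_{ij}$, $\dots$) is present.

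For every summand containing at least one such off‑diagonal entry, I would bound it directly: use the isotropic local law from Lemma~\ref{lem for s_1} (e.g.\ $|G_{ij}|\prec\sqrt{k/(Nu_1)}$ for $i\ne j$ and $|(G^2)_{\b w\b v}|\prec u_1^{-3/2}$), the bound $\la\langle\underline S\rangle\langle\underline T\rangle\langle\underline V\rangle\ra\prec kl^2/(N^3u_2u_3u_4)$, and $1/|s_1|\asymp u_1^{-1}\le k$, and count powers of $N$. The key device is that index sums like $\sum_i a_{ij}G_{ij}$, where $a_{ij}\deq N^{3/2}\cal C_3(H_{ij})(1+\delta_{ij})^{-2}$ (so $\max_{i,j}|a_{ij}|=O(1)$), can be written as $N^{1/2}\la\overline{\b v}^j,G\b e_j\ra$ with $\b v^j_i\deq N^{-1/2}a_{ij}$, $\sup_j\|\b v^j\|_2=O(1)$, hence $\prec N^{1/2}$ by the isotropic law; this yields the missing factor $N^{-1/2}$ in each index summation, so that (using only $u_i\le C$, never their lower bounds) each such term is $O_\prec(\cal E)$ with no further expansion.

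The genuinely dangerous summands are those where $G_{ii},G_{jj}$ appear with no off‑diagonal partner in $(i,j)$, namely (up to constants) $\tfrac{1}{s_1N^2}\sum_{i,j}\cal C_3(H_{ij})(1+\delta_{ij})^{-2}\,\mathbb E\,G_{ii}G_{jj}(S^2)_{ij}\langle\underline T\rangle\langle\underline V\rangle$ and the variants with $(T^2)_{ij}$ or $(V^2)_{ij}$. For these the direct bound is not enough, and I would apply $z_4V=HV-I$ and then the cumulant expansion of Lemma~\ref{lem:cumulant_expansion} to the factor $\langle\underline V\rangle$, by the same mechanism that produced \eqref{5.5}. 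This gains an extra factor $1/s_4\asymp u_4^{-1}$ and replaces $\langle\underline V\rangle$ by a sum of terms carrying either a further $\langle\underline V\rangle$, or a $1/N$, or a resolvent difference such as $\underline{VT^2}\prec l/|z_3-z_4|$ and $\underline{VG^2}\prec(k+l)/|z_1-z_4|$ (obtained from the resolvent identity exactly as in \eqref{19082960}), plus a remainder. After inserting $G_{ii}=m(z_1)+(G_{ii}-m(z_1))$ and likewise for $G_{jj}$, each resulting piece can be bounded via Lemma~\ref{lem for s_1}, the resummation bounds of the previous paragraph applied to $\sum_i a_{ij}(S^m)_{ij}$ for $m=1,2,3$, and the crude contour bounds $1/|z_i-z_j|=O(k^2\wedge l^2)$, and is $O_\prec(\cal E)$; the new remainder is controlled by the method of Lemma~\ref{lem:5.2} (cf.\ Remark~\ref{rmk:5.4}).

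I expect the main obstacle to be exactly this third step: identifying which terms fail the target on their own, and regaining the missing $N^{-\zeta}$ through the second cumulant expansion in $\langle\underline V\rangle$, while tracking the proliferation of sub‑terms together with their powers of the $u_i$ and their $|z_i-z_j|^{-1}$ singularities along the contours $\widehat\gamma_i$. Once that bookkeeping is organized, the differentiation step and the direct estimates of the harmless terms, together with the remainder estimates, are routine.
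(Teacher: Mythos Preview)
Your proposal is correct and follows essentially the same route as the paper: expand $L_2$ explicitly via \eqref{eqn:diff}, single out the terms $G_{ii}G_{jj}(S^2)_{ij}\langle\underline T\rangle\langle\underline V\rangle$ (and the $T,V$ variants) as the only ones needing a second cumulant expansion in $\langle\underline V\rangle$ through $z_4V=HV-I$, and bound everything else directly with Lemma~\ref{lem for s_1} and the isotropic resummation $\sum_i a_{ij}(S^m)_{ij}=N^{1/2}\langle\overline{\b v}^j,(S^m)\b e_j\rangle$. The paper carries out exactly this program, including the $G_{ii}=m(z_1)+(G_{ii}-m(z_1))$ splitting, the resolvent-identity bounds on $\underline{VT^2}$, $(G^2V)_{ii}$, $(S^3V)_{ij}$, and the treatment of the remainder via the method of Lemma~\ref{lem:5.2}.
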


In the sequel, we estimates the terms $L_3/s_1$ and $K/s_1$ in (\ref{5.5}), which are not negligible. The results are stated in Lemmas \ref{lem.19101701} and \ref{lem.19101702}, followed by the proofs. 

\begin{lem} \label{lem.19101701}
	Let $L_3$ be as in (\ref{20070601}),  we have
	\begin{multline} \label{20070610}
	L_3/s_1=-\frac{2}{s_1N^2}\sum_{i,j}\cal C_4(H_{ij}) \bb E G_{ii}G_{jj}\Big((S^2)_{ii}S_{jj} \langle\ul{T}\rangle \langle \ul{V} \rangle
	+(T^2)_{ii}T_{jj} \langle\ul{S}\rangle \langle \ul{V} \rangle+(V^2)_{ii}V_{jj} \langle\ul{S}\rangle \langle \ul{T} \rangle\Big)+O_{\prec}(\cal E)\,.
	\end{multline}
\end{lem}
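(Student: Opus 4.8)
The plan is to expand $L_3$ by Leibniz's rule together with the differential rule \eqref{eqn:diff}, and then to classify the resulting terms by size, discarding everything that is $O_\prec(\cal E)$. Concretely, write $L_3=\frac{1}{3!\,N}\sum_{i,j}\cal C_4(H_{ij})\,\bb E\,\partial^3_{ij}\big(G_{ji}\la\langle\ul S\rangle\langle\ul T\rangle\langle\ul V\rangle\ra\big)$ and expand $\partial^3_{ij}$ as $\sum_{a=0}^{3}\binom{3}{a}(\partial^a_{ij}G_{ji})\,\partial^{3-a}_{ij}\la\langle\ul S\rangle\langle\ul T\rangle\langle\ul V\rangle\ra$, then distribute the $3-a$ remaining derivatives among the factors $\langle\ul S\rangle,\langle\ul T\rangle,\langle\ul V\rangle$. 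Since $\partial_{ij}$ kills the constant $\bb E[\langle\ul S\rangle\langle\ul T\rangle\langle\ul V\rangle]$, for $a<3$ the $\la\cdot\ra$ may be dropped, while for $a=3$ one keeps $\bb E[\partial^3_{ij}G_{ji}\,\la\langle\ul S\rangle\langle\ul T\rangle\langle\ul V\rangle\ra]=\bb E[\langle\partial^3_{ij}G_{ji}\rangle\,\la\langle\ul S\rangle\langle\ul T\rangle\langle\ul V\rangle\ra]$. Repeated use of \eqref{eqn:diff} turns each $\partial^a_{ij}G_{ji}$ into a sum of products of $a+1$ Green-function entries with indices in $\{i,j\}$; each derivative on $\langle\ul S\rangle$ produces $-\frac{2}{N}(S^2)_{ij}(1+\delta_{ij})^{-1}$, and a second derivative on the same factor produces $\frac{2}{N}(1+\delta_{ij})^{-2}\big[S_{ii}(S^2)_{jj}+S_{jj}(S^2)_{ii}+2S_{ij}(S^2)_{ij}\big]$, with the obvious analogues for $T$ and $V$.

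Next I would bound all the resulting terms using Lemma \ref{lem for s_1}, Lemma \ref{Tlemh} (so $\cal C_4(H_{ij})=O(N^{-2})$), and $1/|s_1|\asymp1/u_1$. The heuristics are: each off-diagonal entry $G_{ij}$ or $(S^2)_{ij}$ with $i\neq j$ gains a factor $\sqrt{k/(Nu_1)}$ resp.\ $u_2^{-1}\sqrt{k/(Nu_2)}$; each centered factor $\langle\ul S\rangle$ costs $k/(Nu_2)$; a summation like $\sum_i(S^2)_{ii}=N\ul{S^2}$ is $\prec N/u_2$; and diagonal resolvent products concentrate, e.g.\ $\sum_{ij}G_{ij}(S^2)_{ij}=N\,\ul{GS^2}$, which by resolvent identities as in \eqref{19082960} is $\prec N(k+l)/|z_1-z_2|$. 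Using $k,l\le n_\omega$ (hence $k,l\ll N^{1/3}$) together with Lemma \ref{lem:5.22}, one checks term by term that every contribution in which $\partial_{ij}$ hits $G_{ji}$ more than once, or a derivative hits an off-diagonal entry, or the two surviving derivatives split across two distinct factors among $\langle\ul S\rangle,\langle\ul T\rangle,\langle\ul V\rangle$, is $O_\prec(\cal E)$; the $a=3$ piece is controlled via Cauchy--Schwarz, $\bb E[\langle\partial^3_{ij}G_{ji}\rangle\la\langle\ul S\rangle\langle\ul T\rangle\langle\ul V\rangle\ra]\prec\sqrt{k/(Nu_1)}\cdot\|\langle\ul S\rangle\langle\ul T\rangle\langle\ul V\rangle\|_{L^2}$. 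The remainder-type quantities here are estimated exactly as in the proofs of Lemmas \ref{lem:5.2} and \ref{lem:5.5}, and by Lemma \ref{prop_prec}(ii) with the deterministic bound $\|G\|\le N^{10}$ on $\widehat{\gamma}_i$ these bounds pass to expectations.

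The only surviving contribution is the one with exactly one derivative on $G_{ji}$, kept at its diagonal part $-G_{ii}G_{jj}(1+\delta_{ij})^{-1}$, and the other two derivatives acting on a single factor, say $\langle\ul S\rangle$, kept at its diagonal part $\frac{2}{N}(1+\delta_{ij})^{-2}\big[S_{ii}(S^2)_{jj}+S_{jj}(S^2)_{ii}\big]$ (the restriction $i=j$ and the $(1+\delta_{ij})$-corrections affect only $O(N)$ summands and are absorbed into $O_\prec(\cal E)$). Collecting the constants---the Leibniz coefficient $\binom31=3$, the prefactor $1/3!$, and the two factors $\frac{2}{N}$---and symmetrizing $i\leftrightarrow j$ using $\cal C_4(H_{ij})=\cal C_4(H_{ji})$ (which merges the $S_{ii}(S^2)_{jj}$ and $S_{jj}(S^2)_{ii}$ pieces), together with the two analogous contributions where the pair of derivatives lands on $\langle\ul T\rangle$ resp.\ $\langle\ul V\rangle$, produces precisely the right-hand side of \eqref{20070610}.

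I expect the main obstacle to be this second step: a sizeable number of terms is produced by $\partial^3_{ij}$, and for each one must verify that the powers of $N$, $u_i^{-1}$, $k$ and $l$ combine to beat $\cal E$ (which is where the assumption $k,l\le n_\omega\ll N^{1/3}$ is essential). The tightest cases are those carrying a single off-diagonal entry, such as $G_{ij}^2\,S_{jj}(S^2)_{ii}\langle\ul T\rangle\langle\ul V\rangle$ and $G_{ii}G_{jj}G_{ij}(S^2)_{ij}\langle\ul T\rangle\langle\ul V\rangle$, for which the crude bound $|G_{ij}|\prec1$ does not suffice and one must instead invoke the improved estimates $|(G^2)_{\b w\b v}|\prec u_1^{-3/2}$, $\ul{G^2}\prec u_1^{-1}$ of Lemma \ref{lem for s_1} together with the resolvent-identity bounds of the type \eqref{19082960}.
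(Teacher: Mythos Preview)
Your proposal is correct and follows essentially the same route as the paper: expand $L_3$ by Leibniz and the differential rule \eqref{eqn:diff}, identify the surviving contribution as the one with a single derivative on $G_{ji}$ (kept at its diagonal part $-G_{ii}G_{jj}$) and the remaining two derivatives on a single factor $\langle\ul S\rangle$, $\langle\ul T\rangle$ or $\langle\ul V\rangle$ (kept at its diagonal part), and bound everything else by $O_\prec(\cal E)$ using Lemma~\ref{lem for s_1} and $\cal C_4(H_{ij})=O(N^{-2})$. The combinatorics you describe for the surviving constant are right.

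The one genuine methodological difference concerns the $a=3$ piece. The paper singles out the term $L_{3,1}/s_1=-\frac{1}{s_1N}\sum_{i,j}\cal C_4(H_{ij})\,\bb E\langle G_{ii}^2G_{jj}^2\rangle\langle\ul S\rangle\langle\ul T\rangle\langle\ul V\rangle$ as ``most dangerous'' and handles it by a further cumulant expansion, writing $z_1G_{ii}=(HG)_{ii}-1$ to gain an extra factor of $s_1^{-1}$. Your alternative---the direct concentration bound $\langle G_{ii}^2G_{jj}^2\rangle\prec\sqrt{k/(Nu_1)}$---also works: it yields the estimate $\frac{k^{3/2}l^2}{N^{9/2}u_1^{3/2}u_2u_3u_4}$, which is $\prec\cal E$ because $k^{1/2}l\cdot u_1^{1/2}u_2u_3u_4\prec N^{1/2-\zeta}$ for all $\alpha\in(0,1/3)$ (check the two cases $\alpha\lessgtr 1/6$ of $\zeta$). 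So your approach is slightly more economical here, while the paper's iterated expansion is more systematic and would remain available if the direct bound were borderline. A small clarification: you do not actually need Cauchy--Schwarz; the pointwise high-probability bounds together with Lemma~\ref{prop_prec}(ii) suffice, exactly as in the paper's treatment of \eqref{5.27}--\eqref{5.29}. Also, Lemma~\ref{lem:5.22} is not used inside this lemma---it enters only later when one integrates the error $\cal E$ over the contours.
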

\begin{proof}
 By definition 
\begin{equation} \label{5.26}
L_3/s_1=\frac{1}{6s_1N}\sum_{i,j}\cal C_{4}(H_{ij}) \bb E\partial^3_{ij} \big(G_{ji}\big\la\langle \underline{S}\rangle \langle \underline{T}\rangle \langle \underline{V}\rangle\big\ra\big)\,.
\end{equation}
Using \eqref{eqn:diff}, we easily see that the leading terms on the RHS of \eqref{20070610} come from applying one $\partial_{ij}$ on $G_{ij}$,  and then applying the remaining two $\partial_{ij}$ either both on $\langle \ul{S} \rangle $, $\langle \ul{T} \rangle $, or $\langle \ul{V} \rangle $. For example, the first term on the RHS of \eqref{20070610} is contained in
	\[
	\frac{1}{6}\frac{1}{N}\sum_{i,j}\cal C_{4}(H_{ij}) \bb E\Big(3(\partial_{ij} G_{ji})(\partial^2_{ij}\langle \underline{S}\rangle) \langle \underline{T}\rangle \langle \underline{V}\rangle\Big)\,.
	\]
    Aside from the explicit terms we have on RHS of \eqref{20070610}, the remaining terms on the RHS of \eqref{5.26} are the error terms.  Among them, the most difficult term comes from applying $\partial_{ij}^3$ all on $G_{ij}$, which may generate four diagonal entries of the Green function. More specifically, the following term is the most dangerous on for the estimation
	\[
	L_{3,1}/s_1\deq	-\frac{1}{s_1N} \sum_{i,j} \cal C_4(H_{ij}) \bb E \langle G_{ii}^2G_{jj}^2 \rangle \langle\ul{S} \rangle \langle\ul{T} \rangle \langle \ul{V} \rangle\,.
	\]
	To estimate this term, we need one additional expansion. By writing $z_1G_{ii}=(HG)_{ii}-1$ and using the cumulant expansion in Lemma \ref{lem:cumulant_expansion}, we have
	\begin{multline*}
	L_{3,1}/s_1=\frac{1}{(4z_1+\bb E \ul{G})s_1N}\sum_{i,j}\cal C_4(H_{ij})\bb E \Big(\big\langle G_{ii}^2G_{jj}^2\langle\ul{G}\rangle+3N^{-1}(G^2)_{ii}G_{jj}^2+5N^{-1}G_{ij}(G^2)_{ij} \big\rangle \langle\ul{S} \rangle \langle\ul{T} \rangle \langle \ul{V} \rangle\\
	+2N^{-2}(GS^2)_{ii}G_{ii}G^2_{jj} \langle \ul{T}\rangle \langle \ul{V}\rangle+2N^{-2}(GT^2)_{ii}G_{ii}G^2_{jj} \langle \ul{S}\rangle \langle \ul{V}\rangle+2N^{-2}(GV^2)_{ii}G_{ii}G^2_{jj} \langle \ul{S}\rangle \langle \ul{T}\rangle-4\cal R^{(3,ij)}\Big)\,.
	\end{multline*}
Note that Lemma \ref{Tlemh} implies $\cal C_4(H_{ij})=O(N^{-2})$, uniformly in $i,j$. By $1/|4z_1+\bb E \ul{G}|\approx 1/|4z_1+m(z_1)|=O(1)$ and Lemma \ref{lem for s_1}, one readily follows the approach in Lemma \ref{lem:5.2} and checks that
	\[
	\frac{-4}{(4z_1+\bb E \ul{G})s_1N}\sum_{i,j}\cal C_4(H_{ij}) \cal R^{(3,ij)} \prec\cal E\,.
	\] 
	Again by $1/|4z_1+\bb E \ul{G}|=O(1)$ and Lemma \ref{lem for s_1}, other terms in $L_{3,1}/s_1$ can also be shown to satisfy the same bound. This implies
	\[
	L_{3,1}/s_1 \prec \cal E\,.
	\] 
	Other error terms on the RHS of \eqref{5.26} can be estimated directly using Lemma \ref{lem for s_1}, as in the proofs of Lemmas \ref{lem:5.4} and \ref{lem:5.5}. For example, we will have terms of the types
	\begin{equation} \label{5.27}
	-\frac{1}{s_1N} \sum_{i,j} \cal C_4(H_{ij}) \bb E \langle G_{ii}G_{jj}G_{ij}^2 \rangle \langle\ul{S} \rangle \langle\ul{T} \rangle \langle \ul{V} \rangle
	\end{equation}
	and
	\begin{equation} \label{5.28}
	-\frac{1}{s_1N^2} \sum_{i,j} \cal C_4(H_{ij}) \bb E  G_{ii}G_{jj} (S^2)_{ij}S_{ij} \langle\ul{T} \rangle \langle \ul{V} \rangle
	\end{equation}
	as well as 
	\begin{equation} \label{5.29}
	-\frac{1}{s_1N^3} \sum_{i,j} \cal C_4(H_{ij}) \bb E  G_{ii}G_{jj} (S^2)_{ij}S_{ij} (T^2)_{ij}T_{ij} \langle \ul{V} \rangle\,.
	\end{equation}
Applying Lemma \ref{lem for s_1}, we can easily get
\begin{equation*}
\eqref{5.27} \prec \frac{1}{u_1N} \cdot N^{-2}\Big(N+\sum_{i,j:i \ne j} \bb E |G_{ij}^2| \Big)\cdot  \frac{kl^2}{u_2u_3u_4N^3}\prec\frac{kl^2}{u_1u_2u_3u_4N^5}+\frac{k^2l^2}{u_1^2u_2u_3u_4N^5} \prec \cal E
\end{equation*}
and
\[
\eqref{5.28} \prec \frac{1}{u_1N^2} \cdot N^{-2} \Big(N+\sum_{i,j:i\ne j}\bb E|S_{ij}|\Big)\cdot \frac{1}{u_2^{3/2}}\cdot \frac{l^2}{u_3u_4N^2}\prec \frac{l^2}{u_1u_2^{3/2}u_3u_4N^5}+\frac{k^{1/2}l^2}{u_1u_2^2u_3u_4N^{9/2}} \prec \cal E
\]
as well as
\[
\eqref{5.29} \prec \frac{1}{u_1N^3} \cdot N^{-2} \Big(N+\sum_{i,j:i\ne j} \bb E |S_{ij}|\Big)\frac{1}{u_2^{3/2}}\cdot \frac{1}{u_3^{3/2}} \cdot \frac{l}{Nu_4} \prec \cal E\,.
\]
Using the above argument, we can bound all the error terms on the RHS of \eqref{5.26} by $O_{\prec}(\cal E)$. This completes the proof.
\end{proof}
\begin{lem} \label{lem.19101702}
	Let $K$ be as in (\ref{20070605}), we have
	\[
	K/s_1=-\frac{\sigma^2-2}{4s_1N^3}\sum_i \bb E\Big( G_{ii} (S^2)_{ii}\la \ul{T} \ra \la \ul{V} \ra +\bb E G_{ii} (T^2)_{ii}\la \ul{S} \ra \la \ul{V} \ra +\bb E G_{ii} (V^2)_{ii}\la \ul{S} \ra \la \ul{T} \ra \Big)+O_{\prec} (\cal E)\,.
	\]
\end{lem}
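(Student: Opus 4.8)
The plan is to obtain $K$ by a direct application of the differential rule \eqref{eqn:diff}; unlike the previous lemmas, no further cumulant expansion is needed here. Recall from \eqref{20070605} that $K=\frac{\sigma^2-2}{4N^2}\sum_i\bb E\,\partial_{ii}\big(G_{ii}\la\langle\ul{S}\rangle\langle\ul{T}\rangle\langle\ul{V}\rangle\ra\big)$. First I would specialize \eqref{eqn:diff} to the diagonal case $i=j$, giving $\partial_{ii}G_{xy}=-G_{xi}G_{iy}$; in particular $\partial_{ii}G_{ii}=-G_{ii}^2$ and $\partial_{ii}\ul{S}=\frac1N\sum_x\partial_{ii}S_{xx}=-\frac1N(S^2)_{ii}$, and similarly for $T,V$. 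Since $\bb E\,\ul{S}$ and $\bb E[\langle\ul{S}\rangle\langle\ul{T}\rangle\langle\ul{V}\rangle]$ are deterministic and hence annihilated by $\partial_{ii}$, the product rule yields
\begin{align*}
\partial_{ii}\big(G_{ii}\la\langle\ul{S}\rangle\langle\ul{T}\rangle\langle\ul{V}\rangle\ra\big)
=&-G_{ii}^2\,\la\langle\ul{S}\rangle\langle\ul{T}\rangle\langle\ul{V}\rangle\ra\\
&-\frac1N G_{ii}\big((S^2)_{ii}\langle\ul{T}\rangle\langle\ul{V}\rangle+(T^2)_{ii}\langle\ul{S}\rangle\langle\ul{V}\rangle+(V^2)_{ii}\langle\ul{S}\rangle\langle\ul{T}\rangle\big)\,.
\end{align*}
Inserting this into \eqref{20070605} and dividing by $s_1$, the second line reproduces verbatim the explicit expression on the RHS of the claimed identity (the $\frac1N$ combining with the $\frac1{N^2}$ to give $\frac1{N^3}$). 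It therefore remains to show that the term coming from $-G_{ii}^2\la\langle\ul{S}\rangle\langle\ul{T}\rangle\langle\ul{V}\rangle\ra$, namely $-\frac{\sigma^2-2}{4s_1N^2}\sum_i\bb E\big[G_{ii}^2\,\la\langle\ul{S}\rangle\langle\ul{T}\rangle\langle\ul{V}\rangle\ra\big]$, is $O_{\prec}(\cal E)$.

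For this error term the key observation is that its naive leading part vanishes after taking expectation. Writing $G_{ii}^2=m(z_1)^2+(G_{ii}^2-m(z_1)^2)$, the $m(z_1)^2$ part contributes $-\frac{\sigma^2-2}{4s_1N}m(z_1)^2\,\bb E\,\la\langle\ul{S}\rangle\langle\ul{T}\rangle\langle\ul{V}\rangle\ra=0$ since $\bb E\la\,\cdot\,\ra=0$. For the remaining part I would invoke Lemma \ref{lem for s_1} (together with Lemma \ref{prop_prec}(ii), using the deterministic $N^{10}$-bound on all the resolvents on $\widehat{\gamma}_i$) in the forms
\[
\Big|\sum_i\big(G_{ii}^2-m(z_1)^2\big)\Big|=\Big|\sum_i(G_{ii}-m(z_1))^2+2m(z_1)N(\ul{G}-m(z_1))\Big|\prec \frac{k}{u_1}\,,
\]
together with $|\langle\ul{S}\rangle|\prec k/(Nu_2)$, $|\langle\ul{T}\rangle|\prec l/(Nu_3)$, $|\langle\ul{V}\rangle|\prec l/(Nu_4)$ (hence $|\la\langle\ul{S}\rangle\langle\ul{T}\rangle\langle\ul{V}\rangle\ra|\prec kl^2/(N^3u_2u_3u_4)$) and $1/|s_1|\asymp1/u_1$; this bounds the remaining contribution by $O_{\prec}\big(k^2l^2/(u_1^2u_2u_3u_4N^5)\big)$.

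It only remains to verify $k^2l^2/(u_1^2u_2u_3u_4N^5)\prec\cal E$. Since $t_{i,j}\geq(u_iu_j)^{-2}$ (see \eqref{def of t_ij}) we have $\cal E\geq kl\,(u_1u_2u_3u_4)^{-2}N^{-4-\zeta}$, while $u_i\lesssim1$ on each $\widehat{\gamma}_i$ and $k,l\leq n_\omega\lesssim N^{\alpha}(\log N)^2$, so the inequality reduces to $kl\,u_2u_3u_4\lesssim N^{1-\zeta}$, which holds because $2\alpha<1-\zeta$ for all $\alpha\in(0,\tfrac13)$ (recall $\zeta=(\tfrac13-\alpha)\wedge\alpha$). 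The only genuinely delicate point in the whole argument is this last estimate — in particular controlling the factor $k/u_1$ produced by the fluctuation of $\sum_i G_{ii}^2$ near the spectral edge, where $u_1$ is smallest; everything else is an elementary application of the differential rule and the local law, as in the proofs of Lemmas \ref{lem:5.4}, \ref{lem:5.5} and \ref{lem.19101701}.
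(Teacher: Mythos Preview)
Your proposal is correct and follows essentially the same route as the paper: apply the differential rule \eqref{eqn:diff} to expand $K$, keep the three explicit $(S^2)_{ii},(T^2)_{ii},(V^2)_{ii}$ terms, and bound the $G_{ii}^2$ term using the local law. The only cosmetic difference is that the paper bounds $|\langle G_{ii}^2\rangle|\prec\sqrt{k/(Nu_1)}$ entrywise before summing, whereas you first sum over $i$ and use the averaged law on $N(\ul G-m)$ to get the slightly sharper $\sum_i(G_{ii}^2-m^2)\prec k/u_1$; either estimate is enough to land inside $O_\prec(\cal E)$.
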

\begin{proof}
	The estimate can be done similarly to those of Lemmas \ref{lem:5.5} and \ref{lem.19101701}, and thus we only give a sketch. From the definition and \eqref{eqn:diff}, it is easy to derive
	\begin{multline} \label{eqn: K}
	K/s_1=-\frac{\sigma^2-2}{4s_1N^2} \sum_i \Big(\bb E \la G_{ii}^2\ra \la \ul{S} \ra \la \ul{T} \ra \la \ul{V} \ra \\ +\frac{1}{N}\bb E G_{ii} (S^2)_{ii}\la \ul{T} \ra \la \ul{V} \ra +\frac{1}{N}\bb E G_{ii} (T^2)_{ii}\la \ul{S} \ra \la \ul{V} \ra +\frac{1}{N}\bb E G_{ii} (V^2)_{ii}\la \ul{S} \ra \la \ul{T} \ra \Big)\,.
	\end{multline}
	By Theorem \ref{thm. local semicircle law} and Lemma \ref{lem for s_1}, the first term on the RHS of \eqref{eqn: K} can be bounded by
	\[
	O_{\prec}\Big(	\frac{1}{u_1N^2} \cdot N \cdot \sqrt{\frac{k}{Nu_1}} \frac{kl^2}{N^3u_2u_3u_4}\Big) =O_{\prec}(\cal E)\,.
	\]
	This completes the proof.
\end{proof}

\subsection{Proof of Proposition \ref{prop: centered}} \label{sec: 5.3}
Now we insert Lemmas \ref{lem:5.2} -- \ref{lem.19101702} to \eqref{5.5}, and we get after rearranging the terms 
\begin{multline}  \label{5.16}
\mathbb{E}\langle \underline{G}\rangle \langle \underline{S}\rangle \langle \underline{T}\rangle \langle \underline{V}\rangle=  -\frac{1}{2s_1N^2}\mathbb{E} \Big(\underline{GS^2}+4\sum_{i,j}\cal C_4(H_{ij})  G_{ii}G_{jj}(S^2)_{ii}S_{jj}+\frac{\sigma^2-2}{2N}\sum_i G_{ii}(S^2)_{ii}\Big) \langle\ul{T}\rangle \langle \ul{V} \rangle\\
-\frac{1}{2s_1N^2}\mathbb{E} \Big(\underline{GT^2}+4\sum_{i,j}\cal C_4(H_{ij})   G_{ii}G_{jj}(T^2)_{ii}T_{jj}+\frac{\sigma^2-2}{2N}\sum_i G_{ii}(T^2)_{ii}\Big) \langle\ul{S}\rangle \langle \ul{V} \rangle\\
-\frac{1}{2s_1N^2}\mathbb{E} \Big(\underline{GV^2}+4\sum_{i,j}\cal C_4(H_{ij})  G_{ii}G_{jj}(V^2)_{ii}V_{jj}+\frac{\sigma^2-2}{2N}\sum_i G_{ii}(V^2)_{ii}\Big) \langle\ul{S}\rangle \langle \ul{T} \rangle+ O_{\prec}(\cal E)\,.
\end{multline}

The terms on the RHS of \eqref{5.16} can be further computed using the following lemma. The proof is again done by cumulant expansion and Lemma \ref{lem for s_1}. We omit the details.
\begin{lem} \label{lem.200701}
	We have
	\begin{multline*}
	-\frac{1}{2s_1N^2}\mathbb{E} \Big(\underline{GS^2}+4\sum_{i,j} \cal C_4(H_{ij})  G_{ii}G_{jj}(S^2)_{ii}S_{jj}+\frac{\sigma^2-2}{2N}\sum_i G_{ii}(S^2)_{ii}\Big) \langle\ul{T}\rangle \langle \ul{V} \rangle\\
	=\frac{1}{4s_1s_3N^4}\mathbb{E} \Big(\underline{GS^2}+4\sum_{i,j} \cal C_4(H_{ij})  G_{ii}G_{jj}(S^2)_{ii}S_{jj}+\frac{\sigma^2-2}{2N}\sum_i G_{ii}(S^2)_{ii}\Big)\\
	\times \Big(\underline{TV^2}+4\sum_{i,j} \cal C_4(H_{ij}) T_{ii}T_{jj}(V^2)_{ii}V_{jj}+\frac{\sigma^2-2}{2N}\sum_i T_{ii}(V^2)_{ii}\Big)+O_{\prec}(\cal E)
	\\=f(z_1,z_2)f(z_3,z_4)+O_{\prec}(\cal E)\,,
	\end{multline*}
	where $f(\cdot,\cdot)$ is defined as in \eqref{5.18}.
	
\end{lem}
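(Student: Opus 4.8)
The plan is to prove the two equalities in turn; throughout abbreviate by $\Psi^{(S)}$ the quantity $\ul{GS^2}+4\sum_{i,j}\cal C_4(H_{ij})G_{ii}G_{jj}(S^2)_{ii}S_{jj}+\frac{\sigma^2-2}{2N}\sum_i G_{ii}(S^2)_{ii}$ appearing on the left, and by $\Psi^{(V)}$ the analogous expression $\ul{TV^2}+4\sum_{i,j}\cal C_4(H_{ij})T_{ii}T_{jj}(V^2)_{ii}V_{jj}+\frac{\sigma^2-2}{2N}\sum_i T_{ii}(V^2)_{ii}$ built from $T$ and $V$. Since $\frac{1}{4s_1s_3N^4}=(-\frac{1}{2s_1N^2})(-\frac{1}{2s_3N^2})$, the first equality is equivalent to $\bb E[\Psi^{(S)}\langle\ul{T}\rangle\langle\ul{V}\rangle]=-\frac{1}{2s_3N^2}\bb E[\Psi^{(S)}\Psi^{(V)}]+O_{\prec}(\cal E)$. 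I would obtain this by repeating the two-point computation of Sections \ref{sec 5.1}--\ref{sec: 5.3} that produced \eqref{5.5}--\eqref{5.16}, now carrying the deterministically bounded factor $\Psi^{(S)}$ along as a spectator: apply the resolvent identity $z_3T=HT-I$, so that $z_3\langle\ul{T}\rangle=\langle\ul{HT}\rangle$, and expand $\bb E[\Psi^{(S)}\langle\ul{HT}\rangle\langle\ul{V}\rangle]$ by Lemma \ref{lem:cumulant_expansion} (with $\ell=3$, $h=H_{ij}$) exactly as in \eqref{20070601}--\eqref{5.5}. The self-consistent second-cumulant contribution assembles the prefactor $s_3=z_3+\frac12\bb E\ul{G}(z_3)$; the second-cumulant term in which $\partial_{ij}$ falls on $\langle\ul{V}\rangle$ produces $-\frac{1}{2N^3}\sum_{i,j}T_{ji}(V^2)_{ij}\,\Psi^{(S)}=-\frac{1}{2N^2}\ul{TV^2}\,\Psi^{(S)}$; and the fourth-cumulant term together with the $\sigma^2\ne2$ diagonal correction reproduce, in the very way that $\Psi^{(S)}$ was assembled from $G,S$ in \eqref{5.16} (cf.\ Lemmas \ref{lem.19101701} and \ref{lem.19101702}), the remaining two summands of $-\frac{1}{2N^2}\Psi^{(S)}\Psi^{(V)}$. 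Every other term is $O_{\prec}(\cal E)$: terms in which a derivative falls on $\Psi^{(S)}$ produce an extra off-diagonal Green-function entry, bounded by $O_{\prec}(\sqrt{k/(Nu_1)})$ through Lemma \ref{lem for s_1} and hence supplying the saving $N^{-\zeta}$ built into $\cal E$; terms producing an additional centered trace gain a factor $O_{\prec}(k/(Nu_1))$; the higher-multiplicity diagonal terms are smaller still; and the cumulant remainder is handled by the method of Lemma \ref{lem:5.2} (cf.\ Remark \ref{rmk:5.4}). These estimates are verbatim repetitions of those in Lemmas \ref{lem:5.4}--\ref{lem.19101702}.

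For the second equality I would write $\bb E[\Psi^{(S)}\Psi^{(V)}]=\bb E\Psi^{(S)}\cdot\bb E\Psi^{(V)}+\cov(\Psi^{(S)},\Psi^{(V)})$. The covariance is lower-order: using $\ul{GS^2}=\frac{\ul{G}-\ul{S}}{(z_1-z_2)^2}-\frac{\ul{S^2}}{z_1-z_2}$ and Lemma \ref{lem for s_1} (with Lemma \ref{prop_prec}(ii) for expectations), $\cov(\Psi^{(S)},\Psi^{(V)})$ carries a factor $O_{\prec}(kl/N^2)$ relative to $|\bb E\Psi^{(S)}|\,|\bb E\Psi^{(V)}|$, whence $\frac{1}{4s_1s_3N^4}\cov(\Psi^{(S)},\Psi^{(V)})=O_{\prec}(\cal E)$. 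For the product of means it then suffices to know that $-\frac{1}{2s_1N^2}\bb E\Psi^{(S)}=f(z_1,z_2)+O_{\prec}(kt_{1,2}N^{-2-\zeta})$ and $-\frac{1}{2s_3N^2}\bb E\Psi^{(V)}=f(z_3,z_4)+O_{\prec}(lt_{3,4}N^{-2-\zeta})$, which is precisely an intermediate output of the (simpler) proof of the two-point estimate \eqref{19052410}: that proof yields the two-point analogue of \eqref{5.16}, namely $\bb E\langle\ul{G}\rangle\langle\ul{S}\rangle=-\frac{1}{2s_1N^2}\bb E\Psi^{(S)}+O_{\prec}(kt_{1,2}N^{-2-\zeta})$, and combining it with $\bb E[m_N^\Delta(z_1)m_N^\Delta(z_2)]=\bb E\langle\ul{G}\rangle\langle\ul{S}\rangle+(\bb E\ul{G}-m(z_1))(\bb E\ul{S}-m(z_2))$ and the fact that $g(z_i)$ equals the bias $\bb E\ul{G}(z_i)-m(z_i)$ up to the relevant error identifies $-\frac{1}{2s_1N^2}\bb E\Psi^{(S)}$ with the $f(z_1,z_2)$-part of \eqref{19052410}. (Alternatively one evaluates $\bb E\Psi^{(S)}$ directly, expanding $\bb E\ul{GS^2}$ via the above resolvent identity, replacing $G_{ii}\to m(z_1)$, $(S^2)_{ii}\to m'(z_2)$, $S_{jj}\to m(z_2)$ by Theorem \ref{thm. local semicircle law}, using $\sum_{i,j}\cal C_4(H_{ij})=c_4+O(N^{-1})$ in the real case, and simplifying with $z+\frac12 m(z)=\sqrt{z^2-1}$ and $\sqrt{z^2-1}\,m'(z)=-m(z)$ to recognise $-2s_1N^2f(z_1,z_2)$ as in \eqref{5.18}.) Multiplying the two displays and absorbing the cross terms into $O_{\prec}(\cal E)$ — using $f(z_i,z_j)=O_{\prec}(t_{i,j}N^{-2})$ (read off from \eqref{5.18} and \eqref{def of t_ij}), $k\ge l\ge1$, and \eqref{error} — gives $\frac{1}{4s_1s_3N^4}\bb E\Psi^{(S)}\bb E\Psi^{(V)}=f(z_1,z_2)f(z_3,z_4)+O_{\prec}(\cal E)$, as desired.

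The hard part will be the bookkeeping of the first step: the fourth-order cumulant expansion of a product containing the nontrivial factor $\Psi^{(S)}$ generates a large number of terms, and one must verify that exactly those assembling $\Psi^{(V)}$ survive while all others are $O_{\prec}(\cal E)$. This relies entirely on the systematic use of Lemma \ref{lem for s_1} — in particular the off-diagonal decay $G_{ij}\prec\sqrt{k/(Nu_1)}$, which kills terms carrying several off-diagonal Green-function entries — and on the remainder-estimation technique of Lemma \ref{lem:5.2}, exactly as in the preceding lemmas. The complex Hermitian case is identical upon using the complex cumulant expansion.
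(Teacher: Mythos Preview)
Your proposal is correct and follows essentially the same approach as the paper, which merely says ``The proof is again done by cumulant expansion and Lemma \ref{lem for s_1}. We omit the details.'' Your sketch fills in those details in the natural way: expand $\langle\ul{T}\rangle$ via $z_3T=HT-I$ and the cumulant formula to reproduce $\Psi^{(V)}$ (mirroring the derivation of \eqref{5.16}), then evaluate the resulting product of expectations using the local law to recover $f(z_1,z_2)f(z_3,z_4)$.
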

Note that the first three leading terms on the RHS of (\ref{5.16}) have the same form. Applying Lemma \ref{lem.200701} to all three terms in (\ref{5.16}) with appropriate permutations of $z_1,z_2,z_3,z_4$,  we obtain
\begin{equation*}
\mathbb{E}\langle \underline{G}\rangle \langle \underline{S}\rangle \langle \underline{T}\rangle \langle \underline{V}\rangle=f(z_1,z_2)f(z_3,z_4)+f(z_1,z_3)f(z_2,z_4)+f(z_1,z_4)f(z_2,z_3)+O_{\prec}(\cal E)\,.
\end{equation*}
By repeating the above proof, one also gets that
\begin{equation} \label{eqn: two point}
\mathbb{E}\langle \underline{G}\rangle \langle \underline{S}\rangle=f(z_1,z_3)+O_{\prec}(kt_{1,2}N^{-2-\zeta})
\end{equation}
and
\begin{equation*}
 \bb E \langle \underline{T}\rangle \langle \underline{V}\rangle=f(z_3,z_4)+O_{\prec}(l t_{3,4}N^{-2-\zeta})\,.
\end{equation*}
The above three equations conclude the proof of \eqref{19052450}.

\subsection{The explicit shift} \label{sec 5.4}
In this section we replace the centered random  variables $\langle \ul{G}(z_i) \rangle $ in \eqref{19052450} by the one with an explicit shift $\ul{G}(z_i)-m(z_i)=\langle \ul{G}(z_i) \rangle+\bb E \ul{G}(z_i)-m(z_i)$, which is needed for the proof of Proposition \ref{pro.19052401}  according to  the definition of $m_N^\Delta$ in (\ref{def. t hat}). The following result is stated for $z_1$ only for convenience, but it still holds if we replace  $(z_1,\widehat{\gamma}_1)$ by any other $(z_i,\widehat{\gamma}_i), i=2,3,4$.
\begin{lem} \label{lem: expectstion}
	Recall $g(\cdot)$ from \eqref{eqn: g}. 	We have
	\[
	\bb E \ul{G}-m(z_1)=g(z_1)+O_{\prec}\Big( \frac{k}{N^{3/2}u_1^3}\Big)\,.
	\]
	uniformly for $z_1 \in \widehat{\gamma}_{1}$.
\end{lem}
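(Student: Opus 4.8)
The plan is to run a single step of the resolvent identity followed by the cumulant expansion for $\bb E\ul G(z_1)$, in the same spirit as the proof of Proposition~\ref{pro.19052401} but with no extra random factors present, so that the self-consistent equation for $m$ governs the leading order while the subleading terms reproduce $g$.

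First I would write $z_1\bb E\ul G(z_1)+1=\frac1N\sum_{i,j}\bb E H_{ij}G_{ji}$ from $z_1G=HG-I$, and apply Lemma~\ref{lem:cumulant_expansion} with $\ell=3$, $h=H_{ij}$, $f=G_{ji}$, to get $z_1\bb E\ul G+1=L_1+L_2+L_3+\cal R$, where $L_n=\frac1{n!\,N}\sum_{i,j}\cal C_{n+1}(H_{ij})\bb E\partial^n_{ij}G_{ji}$. Using the differential rule \eqref{eqn:diff}, the $n=1$ term splits as
\[
L_1=-\frac14\bb E\ul G^2-\frac1{4N}\bb E\ul{G^2}-\frac{\sigma^2-2}{4N}\cdot\frac1N\sum_i\bb E G_{ii}^2 .
\]
Setting $\Delta\deq\bb E\ul G(z_1)-m(z_1)$, writing $\bb E\ul G^2=(\bb E\ul G)^2+\bb E\langle\ul G\rangle^2$, and using the self-consistent relation $m^2+4z_1m+4=0$ together with $2m(z_1)+4z_1=4\sqrt{z_1^2-1}$ (both from \eqref{19101520}), the above rearranges into
\[
4\sqrt{z_1^2-1}\,\Delta=-\Delta^2-\bb E\langle\ul G\rangle^2-\frac1N\bb E\ul{G^2}-\frac{\sigma^2-2}{N}\cdot\frac1N\sum_i\bb E G_{ii}^2+4L_2+4L_3+4\cal R .
\]

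Next I would identify the leading order of each term on the right, uniformly on $\widehat\gamma_1$, using Lemma~\ref{lem for s_1} and Theorem~\ref{thm. local semicircle law}. Expanding $\partial^3_{ij}G_{ji}$ by \eqref{eqn:diff}, the only summand carrying no off-diagonal Green function factor is $-6G_{ii}^2G_{jj}^2$; since $\cal C_4(H_{ij})=c_4N^{-2}$ for $i\neq j$ (Lemma~\ref{Tlemh} and the definition of $c_4$) and $G_{ii}=m(z_1)+O_{\prec}(\sqrt{k/(Nu_1)})$, this yields $4L_3=-\frac{4c_4 m(z_1)^4}{N}+(\text{lower order})$. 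Likewise $\frac1N\bb E\ul{G^2}=\frac{m'(z_1)}{N}+(\text{lower order})$ (as $\ul{G^2}=\partial_z\ul G$) and $\frac{\sigma^2-2}{N}\cdot\frac1N\sum_i\bb E G_{ii}^2=\frac{(\sigma^2-2)m(z_1)^2}{N}+(\text{lower order})$, while $\bb E\langle\ul G(z_1)\rangle^2=O_\prec(N^{-2}u_1^{-4})$ is absorbed into the error. The $n=2$ term carries $\cal C_3(H_{ij})=O(N^{-3/2})$, and by \eqref{eqn:diff} every surviving summand of $\partial^2_{ij}G_{ji}$ contains a factor $G_{ij}$ (or a higher off-diagonal power) with $i\neq j$; estimating the resulting sums with the isotropic law gives $L_2=O_{\prec}(\cdot)$, and this is the mechanism producing the $N^{-3/2}$ in the claimed error. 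The remainder $\cal R$ is controlled exactly by the method of Lemma~\ref{lem:5.2} (see Remark~\ref{rmk:5.4}). Collecting the leading terms gives $4\sqrt{z_1^2-1}\,\Delta=-\frac1N\big(m'(z_1)+(\sigma^2-2)m(z_1)^2+4c_4m(z_1)^4\big)+(\text{error})$, which after dividing by $4\sqrt{z_1^2-1}$ is precisely $g(z_1)$ plus error by \eqref{eqn: g}. To close the argument I would bootstrap: the a priori bound $|\Delta|\prec k/(Nu_1)$ from Lemma~\ref{lem for s_1} makes $\Delta^2/\sqrt{z_1^2-1}=O_{\prec}(k^2N^{-2}u_1^{-3})$, which, since $k\le n_\omega\ll N^{1/2}$, is $\ll kN^{-3/2}u_1^{-3}$.

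The main obstacle will be the bookkeeping of the error terms with the correct powers of $u_1$ uniformly up to the spectral edge: one must verify that every term other than $g(z_1)$ is $O_{\prec}(kN^{-3/2}u_1^{-3})$, which near $|E_1|=1$ forces the use of the stronger estimates \eqref{19083101} rather than the bulk local law, and requires showing that the fluctuations of $\ul{G^2}$, $\frac1N\sum_iG_{ii}^2$ and of $\bb E\langle\ul G\rangle^2$ are negligible at this precision. On the (tiny) portion of $\widehat\gamma_1$ where $u_1$ is so small that the target bound already exceeds the deterministic bound $|\bb E\ul G-m|\le CN^{10}$, the claim is of course trivial, so the quantitative estimates only need to be pushed through in the regime where they actually hold.
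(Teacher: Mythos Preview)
Your approach is essentially the paper's: one cumulant expansion applied to $z_1\bb E\ul G=\bb E\ul{HG}-1$ yields the quadratic self-consistent equation $(\bb E\ul G)^2+4z_1\bb E\ul G+4+\frac1N\bb E\ul{G^2}+\bb E\langle\ul G\rangle^2-4L_2-4L_3-4K-4\cal R=0$, and the leading parts of $L_3$, $K$ and $\frac1N\bb E\ul{G^2}$ assemble into $g(z_1)$ after dividing through by $4\sqrt{z_1^2-1}\asymp u_1$. The paper then performs two \emph{further} cumulant expansions (its \eqref{5.32} and \eqref{5.25}) to obtain the sharp bounds $\bb E\langle\ul G\rangle^2\prec kN^{-3/2}u_1^{-2}$ and $\bb E\ul{G^2}=m'(z_1)+O_\prec(kN^{-1/2}u_1^{-2})$; these are precisely the refinements you flag as ``the main obstacle,'' and they are needed because the target error is tight for the $\frac1N\bb E\ul{G^2}$ term. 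Your stated bound $\bb E\langle\ul G\rangle^2=O_\prec(N^{-2}u_1^{-4})$ is not the one that falls out of Lemma~\ref{lem for s_1} (which gives $k^2N^{-2}u_1^{-2}$), but either bound suffices here since $k\ll N^{1/2}$; the point is only that for $\bb E\ul{G^2}-m'$ a genuine extra expansion (rather than naive differentiation via Cauchy on a disc of radius $\asymp|\eta_1|$) is what delivers the required $kN^{-1/2}u_1^{-2}$ uniformly up to the edge.
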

\begin{proof}
	Using $z_1\bb E\ul{G}=\bb E \ul{HG}-1$ and the cumulant expansion formula, we have
	\begin{equation} \label{5.21}
	(\bb E \ul{G})^2+4z_1\bb E \ul{G}+4+\frac{1}{N}\bb E \ul{G^2}+\bb E \la \ul{G}\ra^2-4L_2^{(4)}-4L_3^{(4)}-4K^{(4)}-4\cal R^{(4)}=0\,,
	\end{equation}
	where \[
	L_n^{(4)} \deq \frac{1}{n!}\frac{1}{N}\sum_{i,j}\cal C_{n+1}(H_{ij}) \bb E\partial^n_{ij} G_{ji}\,, \quad K^{(4)}=-\frac{\sigma^2-2}{4N^2}\sum_i\bb E G^2_{ii}
	\]
	and $\cal R^{(4)}$ is the remainder term. By Lemma \ref{lem for s_1} we can easily check
	\begin{equation} \label{5.22}
	-4L_3^{(4)}=\frac{4c_4}{N} (m(z_1))^4+O_{\prec}\Big(\frac{k}{N^2u_1}\Big)\,, 
	\end{equation}
	\begin{equation} -4K^{(4)}=\frac{\sigma^2-2}{N}(m(z_1))^2+O_{\prec}\Big(\frac{k^{1/2}}{N^{3/2}u^{1/2}}\Big)\,,
	\end{equation}
	and
	\begin{equation} \label{5.31}
	-4L_2^{(4)}-4\cal R^{(4)}\prec \Big(\frac{k}{Nu_1}\Big)^{3/2}\frac{1}{\sqrt{N}}+\frac{1}{N^{3/2}}\,.
	\end{equation}
	Similarly, by cumulant expansion and Lemma \ref{lem for s_1}, we have
	\begin{equation} \label{5.32}
	\bb E  \langle \underline{G} \rangle^2=\ \frac{1}{-4z-2\bb E \ul{G}} \Big(\bb E  \langle \underline{G} \rangle^{3}
	+\frac{1}{N}\bb E \langle \underline{G} \rangle\langle \underline{G^2} \rangle+\frac{2}{N^2}\bb E \underline{G^3}
	-4\cal R^{(5)}\Big)\prec \frac{k}{N^{3/2}u_1^2}
	\end{equation}
	and
	\begin{equation} \label{5.25}
	\bb E \ul{G^2}=\frac{1}{-4z-2\bb E \ul{G}}\Big(4\bb E \ul{G}+\frac{1}{N}\bb E \ul{G^3}+2\bb E  \la \ul{G}\ra \la \ul{G^2}\ra -4\cal R^{(6)}\Big)=m'(z_1)+O_{\prec}\Big(\frac{k}{N^{1/2}u_1^{2}}\Big)\,.
	\end{equation}
 In \eqref{5.31} -- \eqref{5.25} above, we estimate the remainder terms $\cal R^{(4)}$, $\cal R^{(5)}$, $\cal R^{(6)}$ using the method presented in the proof of Lemma \ref{lem:5.2}; see also the discussion in Remark \ref{rmk:5.4}. \black By \eqref{5.22} -- \eqref{5.25}, we can rewrite \eqref{5.21} as
	\begin{equation*}
	(\bb E \ul{G})^2+4z_1\bb E \ul{G}+4+\frac{1}{N}\Big( m'(z_1)+4\,c_4 (m(z_1))^4+(\sigma^2-2)(m(z_1))^2\Big)=O_{\prec}\Big( \frac{k}{N^{3/2}u_1^2}\Big)\,,
	\end{equation*}
	which implies the desired result.
\end{proof}

Combining Proposition \ref{prop: centered} with Lemma \ref{lem: expectstion}, we can now prove Proposition \ref{pro.19052401} in the sequel. 
\begin{proof}[Proof of Proposition \ref{pro.19052401} ]
Equipped with Lemma \ref{lem: expectstion}, we can follow a similar computation as in Section \ref{sec:estimate} to show that
\begin{equation*}
\big(\bb E \ul{G}-m(z_1)\big)\mathbb{E} \langle \underline{S}\rangle \langle \underline{T}\rangle \langle \underline{V}\rangle =O_{\prec}(\cal E)\,,
\end{equation*}
which implies
\begin{multline*}
\bb E\big(m_N^\Delta(z_1)m_N^\Delta(z_2)m_N^\Delta(z_3)m_N^\Delta(z_4)\big)-\mathbb{E}\langle \underline{G}\rangle \langle \underline{S}\rangle \langle \underline{T}\rangle \langle \underline{V}\rangle\\
=\frac{1}{2}\sum_{\{a,b,c,d\}=\{1,2,3,4\}}(\bb E \ul{G}(z_a)-m(z_a))(\bb E \ul{G}(z_b)-m(z_b))\bb E  \langle \underline{G}(z_c)\rangle \langle \underline{G}(z_d)\rangle\\
+(\bb E \ul{G}-m(z_1))(\bb E \ul{S}-m(z_2))(\bb E \ul{T}-m(z_3))(\bb E \ul{V}-m(z_4))+O_{\prec}(\cal E)\,,
\end{multline*}
and
\begin{multline*}
\bb E\big(m_N^\Delta(z_1)m_N^\Delta(z_2)\big)\bb E \big(m_N^\Delta(z_3)m_N^\Delta(z_4)\big)-\mathbb{E}\langle \underline{G}\rangle \langle \underline{S}\rangle\bb E \langle \underline{T}\rangle \langle \underline{V}\rangle\\
=(\bb E \ul{G}-m(z_1))(\bb E \ul{S}-m(z_2))\bb E  \langle \underline{T}\rangle \langle \underline{V}\rangle+(\bb E \ul{T}-m(z_3))(\bb E \ul{V}-m(z_4))\bb E  \langle \underline{G}\rangle \langle \underline{S}\rangle
\\
+(\bb E \ul{G}-m(z_1))(\bb E \ul{S}-m(z_2))(\bb E \ul{T}-m(z_3))(\bb E \ul{V}-m(z_4))+O_{\prec}(\cal E)\,.
\end{multline*}
Thus we have
\begin{multline*}
\cov \big(m_N^\Delta(z_1)m_N^\Delta(z_2),m_N^\Delta(z_3)m_N^\Delta(z_4)\big)-\cov \big(\langle \underline{G}\rangle \langle \underline{S}\rangle, \langle \underline{T}\rangle \langle \underline{V}\rangle\big)\\
=(\bb E \ul{G}-m(z_1))(\bb E \ul{T}-m(z_3))\bb E  \langle \underline{S}\rangle \langle \underline{V}\rangle+(\bb E \ul{G}-m(z_1))(\bb E \ul{V}-m(z_4))\bb E  \langle \underline{S}\rangle \langle \underline{T}\rangle\\
+(\bb E \ul{S}-m(z_2))(\bb E \ul{V}-m(z_4))\bb E  \langle \underline{G}\rangle \langle \underline{T}\rangle+(\bb E \ul{S}-m(z_2))(\bb E \ul{T}-m(z_3))\bb E  \langle \underline{G}\rangle \langle \underline{V}\rangle+O_{\prec}(\cal E)\,.
\end{multline*}
By computing $\bb E \la \ul{G}(z_i) \ra \la \ul{G}(z_j) \ra $ as in \eqref{eqn: two point} and applying Lemma \ref{lem: expectstion}, we have
\begin{multline*}
\cov \big(m_N^\Delta(z_1)m_N^\Delta(z_2),m_N^\Delta(z_3)m_N^\Delta(z_4)\big)-\cov \big(\langle \underline{G}\rangle \langle \underline{S}\rangle, \langle \underline{T}\rangle \langle \underline{V}\rangle\big)\\
=g(z_1)g(z_3)f(z_2,z_4)+g(z_1)g(z_4)f(z_2,z_3)\\
+g(z_2)g(z_4)f(z_1,z_3)+g(z_2)g(z_3)f(z_1,z_4)+O_{\prec}(\cal E)\,.
\end{multline*}
Using the above relation and Proposition \ref{prop: centered}, we conclude the proof of \eqref{19052411}. The proof of (\ref{19052410}) is similar and simpler. We thus omit the details. Hence, we conclude the proof of Proposition \ref{pro.19052401}.
\end{proof}

\section{Further discussion} \label{section 6}

In this section, we make some further remarks on the CvM statistics.

\vspace{2ex}

\noindent$\bullet$ {\it Shortcoming of the CvM statistics}

From application point of view, although the original CvM statistic $\mathcal{A}_N$
is a robust statistic, it has its own shortcoming. For instance, the statistic $\mathcal{A}_N$ is not sensitive to the strength of a low rank deformation of Wigner matrix. By Cauchy interlacing property, a rank one deformation can only cause a change of order $\frac{1}{N}$ for $F_N(t)$. This fact does not depend on the strength of the deformation of the Wigner matrix.   Hence, the power of the statistic $\mathcal{A}_N$ will not be significantly good even if the rank one deformation is very large, if we use $\mathcal{A}_N$ to test the existence of the deformation, say. In other words, the statistic $\mathcal{A}_N$ is not sensitive to the possible outlier of the spectrum. The same problem exists for our MCvM $\mathcal{A}_{N,w}$ since we use $\bar{\lambda}_i$ instead of $\lambda_i$
in the definition of $F_{N,\omega}$ in (\ref{190830110}).  However, from the proof of the main result Theorem \ref{thm. main result}, it is clear that  we indeed have the same convergence as  Theorem \ref{thm. main result} for the following partial sum 
\begin{align*}
\widetilde{\mathcal{A}}_{N,\omega}= &\frac{2}{\pi^2}\sum_{k=1}^{n_\omega} \frac{1}{k^2}  \big(t_k(H)\big)^2- \frac{2}{\pi^2}\sum_{k=1}^{n_\omega} \frac{1}{k(k+2)}t_k(H)t_{k+2}(H)+\frac{1}{\pi^2} \big(t_1(H)\big)^2. 
\end{align*}
as long as $n_\omega\leq N^{\frac13-\varepsilon}$, where $\bar{H}$ is replaced by $H$.  From the application point of view, if we use the statistic $\widetilde{\mathcal{A}}_{N,\omega}$, it is expected to be sensitive to the strength of the low rank deformation since now we have $H$ instead of $\bar{H}$. In this sense, the partial sum statistic such as $\widetilde{\mathcal{A}}_{N,\omega}$ has its own advantage in contrast to the original $\mathcal{A}_N$.

\vspace{2ex}

\noindent $\bullet$ {\it On expectation of $\mathcal{A}_N$}

According to  the simulation result in Figures \ref{fig2} and \ref{fig3}, we can formulate the following conjecture

\begin{con} Under the assumption of Theorem \ref{thm. main result}, we conjecture that the following holds
\begin{multline*}
N^2(\mathcal{A}_{N}-\mathbb{E}\mathcal{A}_{N})\overset{d}{\longrightarrow}  \frac{1}{\beta\pi^2}\sum_{k=1}^\infty\bigg( \frac{1}{ k}(Z_k^2-1)-\frac{1}{\sqrt{k(k+2)}}Z_kZ_{k+2}\bigg)\\
+\frac{2-\beta}{\sqrt{\beta}\pi^2}\sum_{k=1}^\infty\Big(\frac{k+2}{4k^{3/2}(k+1)}Z_{2k}-\frac{1}{4k\sqrt{k+1}}Z_{2k+2}\Big)+a_\beta\,,
\end{multline*}
where $a_\beta$ is defined in Theorem \ref{thm. main result}. 
\end{con}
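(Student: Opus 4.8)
The plan is to drop the Poisson regularization altogether and run the Fourier--Chebyshev analysis of Section \ref{sec 3} directly on $\mathcal{A}_N$. Since $\dd F$ is supported on $(-1,1)$ and $F_N(t)=\frac1N\sum_i\mathbf{1}(\bar\lambda_i\leq t)$ there, the computation leading to \eqref{19052202} yields the \emph{exact} identity
\[
\mathcal{A}_N=\frac{2}{\pi^2}\sum_{k=1}^\infty\frac{1}{k^2}\big(t_k(\bar H)\big)^2-\frac{2}{\pi^2}\sum_{k=1}^\infty\frac{1}{k(k+2)}t_k(\bar H)t_{k+2}(\bar H)+\frac{1}{\pi^2}\big(t_1(\bar H)\big)^2\,,
\]
so $\mathcal{A}_N=\lim_{\omega\downarrow 0}\mathcal{A}_{N,\omega}$ with all damping factors $r_\omega^{2k}$ replaced by $1$ and no truncation. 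Fix a large even integer $M\geq 6$ and split each series into a head ($k\leq M$) and a tail ($k>M$). For the head, Lemma \ref{cor. with bar}, Theorem \ref{thm.by} and the elementary identities of Section \ref{sec 3} apply verbatim with $r_\omega\equiv 1$: $N^2$ times the centered head converges in distribution, as $N\to\infty$, to $Y^{(M)}+a_\beta$, and $Y^{(M)}\to Y$ in $L^2$ as $M\to\infty$, where $Y+a_\beta$ is the variable on the right of \eqref{19110601}. By the interpolation argument of \eqref{3.13}--\eqref{3.15}, the conjecture is therefore equivalent to the single tail estimate
\[
\lim_{M\to\infty}\ \sup_{N}\ \var\!\Big(N^2\Big[\tfrac{2}{\pi^2}\sum_{k>M}\tfrac{1}{k^2}\big(t_k(\bar H)\big)^2-\tfrac{2}{\pi^2}\sum_{k>M}\tfrac{1}{k(k+2)}t_k(\bar H)t_{k+2}(\bar H)\Big]\Big)=0\,.
\]

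For $M\leq k\leq N^{1/3-\varepsilon}$ this is already contained in the present paper: by Lemma \ref{cor. with bar}, $t_k(\bar H)$ may be replaced by $\widehat t_k(H)$ up to negligible errors, and then Proposition \ref{pro.cov} gives $\var(N^2 k^{-2}(t_k(\bar H))^2)=O(k^{-2})$ together with $\cov\big(N^2k^{-2}(t_k(\bar H))^2,\ N^2l^{-2}(t_l(\bar H))^2\big)=O\big(k^{-1}l^{-1}N^{-c}\big)$ for $k\neq l$ and some $c>0$, whence the variance of this part of the tail is $O(M^{-1})+O(N^{-c}(\log N)^2)$; the bilinear $t_kt_{k+2}$ piece is controlled identically. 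The hard part, which I expect to be the real obstacle, is the range $k\gtrsim N^{1/3}$: this is exactly where the contour $\gamma_{a,k}$ of Section \ref{sec 3} descends into $\{\Im z\lesssim N^{-1}\}$, where the local-law error $k/(Nu)$ is no longer $\ll 1$ and Proposition \ref{pro.19052401} breaks down. I would split this into two subregimes. For $N^{1/3}\lesssim k\lesssim N$ one needs the quantitative bounds $\var(t_k(\bar H))=O(kN^{-2})$, $\cov(t_k(\bar H),t_{k+2}(\bar H))=O(kN^{-2})$ and a matching off-diagonal decay, uniformly down to the optimal scale; this should follow by rerunning the cumulant-expansion scheme of Section \ref{sec 5} against a sharpened local law valid at scale $N^{-1}$, but it requires delicate control of the spectral edge, where the $(2-\beta)$-dependent corrections reside, and is technically demanding. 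For $k\gg N$ one exploits the deterministic bound $|t_k(\bar H)|\leq 2$ together with the decorrelation of $T_k(\bar\lambda_i)=\cos(k\arccos\bar\lambda_i)$ once $k|\bar\lambda_i-\mu_i|\gg 1$; making this rigorous via rigidity in the bulk and edge universality (Tracy--Widom joint laws) for the extreme eigenvalues should give $\var(t_k(\bar H))=O(N^{-1})$ and hence $\var\big(N^2\sum_{k>N}k^{-2}(t_k(\bar H))^2\big)=O\big(N^{4}\sum_{k>N}k^{-4}N^{-1}\big)=O(N^{-1})$.

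A complementary route, which may be cleaner near the edge, starts from \eqref{19092801}: $\mathcal{A}_N=\frac1N\sum_i\rho_{sc}(\mu_i)^2(\lambda_i-\mu_i)^2+\frac1{12N^2}+O(N^{-3+\varepsilon})$, and one argues that after centering only two sources of fluctuation survive at order $N^{-2}$ --- the log-correlated (linear-statistics) fluctuations of the bulk eigenvalues, which reproduce the $\frac{1}{\beta\pi^2}$-sum, and the additional edge-sensitive $1/N$-order corrections to $\tr T_k(H)$ underlying the $(2-\beta)$-dependence in Theorem \ref{thm.by}, coupled to the fluctuations of the $O(N^{\varepsilon})$ extreme eigenvalues, which reproduce the $(2-\beta)$-sum. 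In either approach the crucial and currently missing ingredient is uniform control as one passes from the mesoscopic scale $\omega=N^{-\alpha}$, $\alpha<\tfrac13$, down to (and slightly below) the microscopic scale $N^{-1}$, which is precisely the gap pointed out in the remark following Theorem \ref{thm. main result}.
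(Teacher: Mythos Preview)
The statement you are attempting to prove is a \emph{conjecture} in the paper; the paper does not prove it and explicitly leaves it open. So there is no ``paper's own proof'' to compare against. Your proposal is best read as a proof \emph{strategy}, and as such it is well-aligned with the paper's own discussion: you correctly reduce the problem to a tail-variance estimate, you correctly observe that the range $M\le k\le N^{1/3-\varepsilon}$ is already handled by Proposition~\ref{pro.cov}, and you correctly identify the range $k\gtrsim N^{1/3}$ as the genuine obstruction --- exactly the point the paper makes in the remark following Theorem~\ref{thm. main result}.

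That said, your proposal does not close the gap, and the sketched arguments for the hard range are not rigorous. For $N^{1/3}\lesssim k\lesssim N$, ``rerunning the cumulant-expansion scheme against a sharpened local law at scale $N^{-1}$'' is precisely what the paper says would be needed and does not currently exist for general Wigner matrices at the required precision; the edge contribution you flag is real and not handled. For $k\gg N$, the passage from a putative bound $\var(t_k(\bar H))=O(N^{-1})$ to $\var\big(N^2\sum_{k>N}k^{-2}(t_k(\bar H))^2\big)=O(N^{-1})$ is not justified: you need control of $\var\big((t_k(\bar H))^2\big)$ and of all the cross-covariances $\cov\big((t_k)^2,(t_l)^2\big)$, not just of $\var(t_k)$, and the decorrelation heuristic via $\cos(k\arccos\bar\lambda_i)$ combined with rigidity and Tracy--Widom is far from a proof. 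The alternative route via \eqref{19092801} likewise requires precisely the missing input: the paper points out in Section~\ref{section 6} that even for GUE/GOE the constant-order term of $N^2\var(\lambda_i)$ is unknown, so isolating the ``two sources of fluctuation'' at order $N^{-2}$ is not presently feasible. In short, your outline is a reasonable roadmap consistent with the paper's own discussion, but it is not a proof, and the decisive step --- uniform control from the mesoscopic scale down through the microscopic scale --- remains open.
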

In addition to the proof of the above conjecture (in case it is true), for application purpose, it is also necessary to identify $N^2\mathbb{E}\mathcal{A}_{N}$ up to the constant order, since the RHS  is an order 1 random variable. According to the definition of $\mathcal{A}_N$ in (\ref{190529100}) and the rigidity in (\ref{19052221}), one can cut the integral in  (\ref{190529100}) to the domain $t\in [-1+N^{-\varepsilon}, 1-N^{-\varepsilon}]$. Hence, roughly speaking, in order to identify $N^2\mathbb{E}\mathcal{A}_{N}$ up to the constant order, it would be enough to 
estimate $\mathbb{E}(F_N(t)-F(t))^2$ for  $t\in [-1+N^{-\varepsilon}, 1-N^{-\varepsilon}]$, up to the order of $\frac{1}{N^2}$. Write 
\begin{align*}
\mathbb{E}(F_N(t)-F(t))^2= \text{Var}(F_N(t))^2+(\mathbb{E}F_N(t)-F(t))^2.
\end{align*}
Even in the case of GUE/GOE,  only the first order term of $\text{Var}(F_N(t))^2$ is known ( see \cite{Gu05, OR10}), which is of order $\frac{\log N}{N^2}$ in the bulk. The subleading order is not available so far. Starting from the representation in (\ref{19092801}), for  $N^2\mathbb{E}\mathcal{A}_{N}$, one can also turn to identify $N^2\mathbb{E} (\lambda_i-\mu_i)^2$ up to the constant order. However, again, from the reference such as \cite{Gu05, OR10}, only the leading term of order $\log N$ is precisely available.  It is worth mentioning that in the recent work \cite{LS18}, a precise estimate on the constant order of $N^2(\text{Var}(\lambda_i(H))-\text{Var}(\lambda_i(\text{GOE})))$ is obtained in the bulk, see Theorem 1.4 therein. Here $\lambda_i(H)$ means the $i$-th largest eigenvalue of  a general Wigner matrix $H$ and $\lambda_i(\text{GOE})$ means the counterpart for a GOE. Unfortunately, the constant order  term of  $N^2\text{Var}(\lambda_i(\text{GOE}))$ itself is not available so far.

\vspace{2ex}

\noindent $\bullet$ {\it Sample covariance matrices counterpart}

 We remark here that all the discussion and result of the current work for Wigner matrix can be adapted to the sample covariance matrices. Considering the importance of covariance matrices in statistics theory, the CvM statistic and the mesoscopic approximations are potentially useful in many hypothesis testing problems. Especially, as we mentioned earlier, such a statistic has been used in \cite{WP} for testing the structure of the covariance matrices. Specifically, for an analogue of the result in this paper for covariance matrix, one needs to consider the expansion of the spectral distribution  
using the basis of the  shifted Chebyshev polynomials of the first kind. For instance, we refer to \cite{DE06} for the fact that the shifted Chebyshev polynomials diagonalize the covariance structure of the linear spectral statistics of the sample covariance matrices.   The detailed derivation for the counterpart of the current result for sample covariance matrices and the discussion for its applications will be considered in a future work.

\vspace{2ex}
{\bf Acknowledgement}: {We would like to thank Jiang Hu for discussion and simulation. We would also like to thank Yan Fyodorov, Gaultier Lambert, Dong Wang and Lun Zhang for reference and helpful comments.  Finally, we  are grateful to  the anonymous referee for  useful remarks and comments. }

\appendix

\section{A toy model: CUE} \label{App:A}

In this appendix, we consider a Cram\'{e}r-von Mises type statistic for the Circular Unitary Ensemble (CUE). Let $U$ be a $N$-dimensional CUE, which is a Haar distributed unitary matrix.  And we denote its (unordered) eigenvalues by $\mathrm{e}^{\mathrm{i}\theta_i}, 1\leq i\leq N$. Let $F_N(x)=\sharp\{i:  0\leq \theta_i\leq x\}/N, x\in [0,2\pi]$.  Since the eigenvalues of CUE are on a unit circle and all the eigenvalues shall be regarded as ``bulk" eigenvalues, we shall modify the definition of $\mathcal{A}_N$ to avoid the accumulation of the fluctuation of the eigenvalues around the origin. Hence, we choose  the statistic
\begin{align}
\mathcal{A}_N^{\text{CUE}}:=\int_{0}^{2\pi}\int_{0}^{2\pi} \Big(\big(F_N(y)-F_N(x)\big)-\frac{y-x}{2\pi }\Big)^2 {\rm d} x{\rm d} y. \label{19053002}
\end{align}
 One can also consider the Rains' statistic in \cite{Rains1997}, or the following  Watson's statistic  
\begin{align*}
\int \Big(F_N(x)-F(x)-\int(F_N(t)-F(t)){\rm d} F(t)\Big)^2 {\rm d}F(x) 
\end{align*}
which is independent of the choice of the origin; see \cite{Stephens76}. In the sequel, we discuss $\mathcal{A}_N^{\text{CUE}}$ in (\ref{19053002}) only. Our result is stated as the following theorem.
\begin{thm}\label{thm.CUE} Let $\mathcal{A}_N^{\text{CUE}}$ be as in (\ref{19053002}). Let $\{Y_i\}_{i=1}^\infty$ be a collection of  i.i.d.\  $N_{\mathbb{C}}(0,1)$,  i.e.\ complex standard Gaussian random variables with i.i.d. $N(0,\frac12)$ real and imaginary  parts. We have 
\begin{align*}
\frac{N^2}{4}\mathcal{A}_N^{\text{CUE}}-(\log N+\gamma+1)\Longrightarrow \sum_{j=1}^\infty \frac{1}{j}\big(|Y_j|^2-1\big), 
\end{align*}
where $\gamma$ is the Euler's constant.  
\end{thm}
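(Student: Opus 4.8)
The plan is to follow the Diaconis--Evans strategy sketched in the introduction. First I would carry out the Fourier expansion computation in detail: writing $\mathbf 1(0\le\theta_i\le x)$ via its Fourier series and substituting into $F_N(y)-F_N(x)-(y-x)/(2\pi)$, one gets the representation in terms of $\mathrm{Tr}\,U^j$ and $\mathrm{Tr}\,\overline U^j$ displayed before \eqref{19090401}. Plugging this into the double integral \eqref{19053002} and using $\int_0^{2\pi}\mathrm{e}^{\mathrm i (j-k)x}\,\mathrm dx=2\pi\delta_{jk}$ to kill cross terms, the double integral collapses to the diagonal sum
\begin{align*}
\mathcal A_N^{\text{CUE}}=\frac{4}{N^2}\sum_{j=1}^\infty\frac{1}{j^2}\,\mathrm{Tr}\,U^j\,\mathrm{Tr}\,\overline U^j,
\end{align*}
i.e.\ \eqref{19090401}. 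So $\tfrac{N^2}{4}\mathcal A_N^{\text{CUE}}=\sum_{j\ge1}j^{-2}|\mathrm{Tr}\,U^j|^2$.

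Next I would split the sum at some cutoff, say $j\le L\deq N^{1-\varepsilon}$ and $j>L$. For the low-frequency part, Theorem 2.1 of \cite{DE01} (the moments of $\{j^{-1/2}\mathrm{Tr}\,U^j\}_{j\le L}$ match those of independent standard complex Gaussians up to the stated order) gives that $\big(j^{-1/2}\mathrm{Tr}\,U^j\big)_{j=1}^{L}$ converges in the moment sense, jointly, to independent $N_{\mathbb C}(0,1)$ variables $Y_j$; hence for each fixed $M$,
\begin{align*}
\sum_{j=1}^{M}\frac{1}{j^2}|\mathrm{Tr}\,U^j|^2=\sum_{j=1}^{M}\frac1j\cdot\frac{|\mathrm{Tr}\,U^j|^2}{j}\;\overset{d}{\longrightarrow}\;\sum_{j=1}^{M}\frac1j|Y_j|^2 .
\end{align*}
To handle the tail $M<j$, I would use the exact second-moment information from the CUE kernel: $\mathbb E|\mathrm{Tr}\,U^j|^2=\min(j,N)$ and the variance of $|\mathrm{Tr}\,U^j|^2$ is $O(j^2\wedge N^2)$ (both computable from the determinantal structure, cf.\ (2.7) and (2.8)-type identities referenced around \eqref{20070710}). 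This yields, for the centered tail,
\begin{align*}
\mathrm{Var}\Big(\sum_{M<j}\frac{1}{j^2}\langle|\mathrm{Tr}\,U^j|^2\rangle\Big)\le C\sum_{M<j\le N}\frac{1}{j^2}+C\sum_{j>N}\frac{N^2}{j^4}\le \frac{C}{M},
\end{align*}
so the centered tail is negligible uniformly in $N$ as $M\to\infty$; meanwhile $\mathbb E\sum_{M<j}j^{-2}|\mathrm{Tr}\,U^j|^2=\sum_{M<j\le N}\tfrac1j+\sum_{j>N}\tfrac{N}{j^2}=\log N-\log M+O(1)$, and more precisely $\mathbb E\tfrac{N^2}{4}\mathcal A_N^{\text{CUE}}=\sum_{j\le N}\tfrac1j+\sum_{j>N}\tfrac Nj\cdot\tfrac1j\cdot\tfrac jN\cdots$, which I would evaluate exactly as $\log N+\gamma+1+o(1)$ (the $+1$ coming from the $j>N$ block $\sum_{j>N}N/j^2\to 1$, the $\gamma$ from $\sum_{j\le N}1/j-\log N\to\gamma$). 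Subtracting $\log N+\gamma+1$ and letting first $N\to\infty$ then $M\to\infty$, an $\varepsilon/3$-argument with characteristic functions (exactly as in the proof of Theorem \ref{thm. main result}, using that $\sum_{j>M}\tfrac1j(|Y_j|^2-1)$ has variance $\sum_{j>M}j^{-2}\le C/M$) gives
\begin{align*}
\frac{N^2}{4}\mathcal A_N^{\text{CUE}}-(\log N+\gamma+1)\;\Longrightarrow\;\sum_{j=1}^\infty\frac1j\big(|Y_j|^2-1\big).
\end{align*}

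The main obstacle is the tail control: I need both the exact expectation $\mathbb E|\mathrm{Tr}\,U^j|^2=\min(j,N)$ \emph{and} a good enough bound on $\mathrm{Var}(|\mathrm{Tr}\,U^j|^2)$ that is summable against $j^{-2}$ even for $j$ comparable to or larger than $N$, since the Diaconis--Evans Gaussian approximation only controls $j\ll N$. This requires genuinely computing (or citing) fourth-mixed-moment identities $\mathbb E|\mathrm{Tr}\,U^j|^4$ for all $j$ from the CUE joint density \eqref{jpdf of CUE} or the correlation kernel \eqref{20070710}; the bookkeeping for $j$ in the regime $j\sim N$ (where $\mathrm{Tr}\,U^j$ is no longer asymptotically Gaussian) is the delicate point. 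A secondary but purely computational nuisance is pinning down the additive constant: getting the precise $\gamma+1$ rather than $\gamma+O(1)$ requires carefully summing $\sum_{j\le N}1/j$ and $\sum_{j>N}N/j^2$ to $o(1)$ accuracy and confirming no further constant is hidden in the $\mathrm{Tr}\,U^j$ moment asymptotics near $j=N$.
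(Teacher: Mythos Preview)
Your approach is essentially the paper's: Fourier-expand to get $\tfrac{N^2}{4}\mathcal A_N^{\text{CUE}}=\sum_{j\ge1}j^{-2}|\mathrm{Tr}\,U^j|^2$, compute the expectation exactly to extract $\log N+\gamma+1$, use Diaconis--Evans for a fixed truncation $M$, bound the centered tail in $L^2$, and finish with the $\varepsilon/3$ characteristic-function argument. One technical point to flag: your displayed tail bound
\[
\mathrm{Var}\Big(\sum_{j>M}\tfrac{1}{j^2}\langle|\mathrm{Tr}\,U^j|^2\rangle\Big)\le C\sum_{M<j\le N}\tfrac{1}{j^2}+C\sum_{j>N}\tfrac{N^2}{j^4}
\]
is written as a sum of individual variances, i.e.\ as if the $|\mathrm{Tr}\,U^j|^2$ were uncorrelated. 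They are not, and Cauchy--Schwarz alone is too crude here (it would give $(\log(N/M))^2$, which diverges). The paper closes this by computing the full cross-covariances $\mathrm{Cov}\big(|\mathrm{Tr}\,U^j|^2,|\mathrm{Tr}\,U^k|^2\big)$ from the four-point function of the determinantal kernel (formula \eqref{19100605}): for $j\neq k$ these covariances \emph{vanish} when $j+k\le N$ and are $O(N)$ otherwise, which is what makes the tail sum $O(1/M)$. You correctly anticipate this in your ``main obstacle'' paragraph, but the moment you actually need is the mixed one $\mathbb E|\mathrm{Tr}\,U^j|^2|\mathrm{Tr}\,U^k|^2$, not just $\mathbb E|\mathrm{Tr}\,U^j|^4$.
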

The proof of Theorem \ref{thm.CUE} will be given at the end of this appendix,  after we introduce some necessary preliminary results. 

 Consider a square integrable function  $f: [0,2\pi]\to \mathbb{R}$. We denote the Fourier expansion of $f(\theta)$ by $\sum_{j\in\mathbb{Z}} \widehat{f}_j \mathrm{e}^{\ii  j\theta}$, where 
\begin{align*}
\widehat{f}_j:= \frac{1}{2\pi}\int_{0}^{2\pi} f(\theta) \mathrm{e}^{-\ii j\theta} {\rm d} \theta. 
\end{align*}
It is elementary to check  the following Fourier expansion of the indicator function 
\begin{align*}
\mathbf{1}_{[\alpha,\beta]}(\theta)\mapsto  \frac{1}{2\pi }(\beta-\alpha)+\frac{1}{2\pi \ii}\sum_{j=1}^\infty \frac{\mathrm{e}^{-\ii j\alpha}-\mathrm{e}^{-\ii j\beta}}{j} \mathrm{e}^{\ii j\theta}
+\frac{1}{2\pi \ii}\sum_{j=1}^\infty \frac{\mathrm{e}^{\ii j\beta}-\mathrm{e}^{\ii j\alpha}}{j} \mathrm{e}^{-\ii j\theta}, \qquad 0\leq \alpha<\beta\leq 2\pi.
\end{align*}
It is well known that the series on the RHS above converges to the indicator function pointwise except for only two points $\alpha$ and $\beta$. 
Therefore, for $x\leq y$, we have 
\begin{align*}
F_N(y)-F_N(x) - \frac{y-x}{2\pi }\mapsto \frac{1}{2N\pi \ii}\sum_{j=1}^\infty \frac{\mathrm{e}^{-\ii jx}-\mathrm{e}^{-\ii jy}}{j} \text{Tr}U^j
+\frac{1}{2N\pi \ii}\sum_{j=1}^\infty \frac{\mathrm{e}^{\ii jy}-\mathrm{e}^{\ii jx}}{j} \text{Tr}\overline{U}^j.
\end{align*}
Consequently, we have 
\begin{align*}
\mathcal{A}_N^{\text{CUE}}=&-\frac{1}{2N^2\pi^2}\int_{0}^{2\pi}\bigg(\int_{0}^y \Big(\sum_{j=1}^\infty \frac{\mathrm{e}^{-\ii jx}-\mathrm{e}^{-\ii jy}}{j} \text{Tr}U^j+\sum_{j=1}^\infty \frac{\mathrm{e}^{\ii jy}-\mathrm{e}^{\ii jx}}{j} \text{Tr}\overline{U}^j
\Big)^2{\rm d} x\bigg) {\rm d} y \nonumber\\
=
&\frac{4}{N^2}\sum_{j=1}^\infty \frac{1}{j^2}\text{Tr}U^j \text{Tr}\overline{U}^j= \frac{4}{N^2}\sum_{j=1}^N \frac{1}{j^2}\text{Tr}U^j \text{Tr}\overline{U}^j+\frac{4}{N^2}\sum_{j=N+1}^\infty \frac{1}{j^2}\text{Tr}U^j \text{Tr}\overline{U}^j
\end{align*}
To study the expectation and fluctuation of $\mathcal{A}_N^{\text{CUE}}$, we need the following result from \cite{DE01}.
\begin{thm}[Theorem 2.1, \cite{DE01}]\label{thm.022501} (a) Consider $a=(a_1,\cdots, a_k)$ and $b=(b_1, \cdots, b_k)$ with $a_j, b_j\in \{0,1,\cdots\}$. Let $Y_1, Y_2, \ldots, Y_k$ be independent standard complex normal random variables. Then $N\geq (\sum_{j=1}^k ja_j)\vee (\sum_{j=1}^k j b_j)$, 
\begin{align*}
\mathbb{E}\bigg( \prod_{j=1}^k (\mathrm{Tr} U^j )^{a_j}\overline{\big(\mathrm{Tr} U^j\big)^{b_j}}\bigg)= \mathbb{E} \bigg( \prod_{j=1}^k (\sqrt{j}Y_j)^{a_j}\overline{(\sqrt{j}Y_j)^{b_j}} \bigg)=\delta_{ab} \prod_{j=1}^k j^{a_j} a_j !
\end{align*} 
(b) For any $j,k$, 
\begin{align*}
\mathbb{E} \bigg( \mathrm{Tr} U^j\mathrm{Tr} \overline{U^k}\bigg)=\delta_{jk} (j\wedge N). 
\end{align*}
\end{thm}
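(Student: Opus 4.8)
The plan is to establish the two assertions by separate routes. Part (b) is a direct second-moment computation using the determinantal structure of the CUE spectrum and holds for all $j,k\geq1$; part (a) for general joint moments is cleanest through the theory of symmetric functions, and the resulting unitary moment is then matched against an elementary Gaussian calculation.

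For part (b) I would use that the eigenangles $\theta_1,\dots,\theta_N$ of $U$ form a determinantal point process on $[0,2\pi)$ with the Christoffel--Darboux kernel $K_N(\theta,\phi)=\frac{1}{2\pi}\sum_{\ell=0}^{N-1}\e{\ii\ell(\theta-\phi)}$, so that $\rho_1(\theta)=K_N(\theta,\theta)=\frac{N}{2\pi}$ and $\rho_2(\theta,\phi)=\rho_1(\theta)\rho_1(\phi)-|K_N(\theta,\phi)|^2$. Writing $\mathrm{Tr}\,U^j=\sum_p\e{\ii j\theta_p}$ and separating diagonal from off-diagonal terms,
\begin{align*}
\bb E\big(\mathrm{Tr}\,U^j\,\overline{\mathrm{Tr}\,U^k}\big)=\int_0^{2\pi}\e{\ii(j-k)\theta}\rho_1(\theta)\,\dd\theta+\int_0^{2\pi}\!\int_0^{2\pi}\e{\ii j\theta-\ii k\phi}\rho_2(\theta,\phi)\,\dd\theta\,\dd\phi\,.
\end{align*}
The first integral equals $N\delta_{jk}$. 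In the second, the $\rho_1(\theta)\rho_1(\phi)$ piece vanishes for $j,k\geq1$; expanding $|K_N(\theta,\phi)|^2=\frac{1}{4\pi^2}\sum_{\ell,m=0}^{N-1}\e{\ii(\ell-m)(\theta-\phi)}$ and integrating term by term forces $j=k$ and $m=\ell+j$, which admits exactly $(N-j)_+$ values of $\ell$, so the off-diagonal contribution is $-\delta_{jk}(N-j)_+$. Adding the two pieces gives $\bb E(\mathrm{Tr}\,U^j\,\overline{\mathrm{Tr}\,U^k})=\delta_{jk}\big(N-(N-j)_+\big)=\delta_{jk}(j\wedge N)$.

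For part (a) I would let $\mu$ be the partition with $a_j$ parts equal to $j$ and $\nu$ the one with $b_j$ parts equal to $j$, so that $\prod_j(\mathrm{Tr}\,U^j)^{a_j}=p_\mu(U)$ is a power-sum symmetric function in the eigenvalues. The Frobenius formula $p_\mu=\sum_{\lambda\vdash|\mu|}\chi^\lambda(\mu)\,s_\lambda$ expands this in Schur polynomials, and the hypothesis $|\mu|\leq N$ guarantees $\ell(\lambda)\leq|\mu|\leq N$ for every $\lambda$ occurring, so the $s_\lambda$ restrict to the irreducible characters of $\mathcal U(N)$ and Weyl's integration formula gives $\int_{\mathcal U(N)}s_\lambda(U)\overline{s_\rho(U)}\,\dd U=\delta_{\lambda\rho}$. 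Applying the same expansion to $\overline{p_\nu}$ and integrating, the double sum collapses to $\sum_{\lambda\vdash|\mu|}\chi^\lambda(\mu)\chi^\lambda(\nu)$ (and the whole expectation is $0$ if $|\mu|\neq|\nu|$); column orthogonality of the symmetric-group character table then yields $\delta_{\mu\nu}z_\mu$ with $z_\mu=\prod_j j^{a_j}a_j!$ the centralizer order. Since $\mu=\nu$ as partitions is the same as $a=b$, this is the first equality in (a). For the Gaussian side, rotation invariance of a standard complex Gaussian gives $\bb E(Y^a\overline Y^{\,b})=0$ for $a\neq b$, and a polar computation gives $\bb E|Y|^{2a}=a!$, so $\bb E\big((\sqrt j\,Y_j)^{a_j}\,\overline{(\sqrt j\,Y_j)^{b_j}}\big)=\delta_{a_jb_j}j^{a_j}a_j!$; multiplying over $j$ using independence reproduces $\delta_{ab}\prod_j j^{a_j}a_j!$, matching the unitary answer.

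The step I expect to demand the most care is the bookkeeping around the condition $N\geq(\sum_j ja_j)\vee(\sum_j jb_j)$ in part (a): it is exactly what makes every partition appearing in the Frobenius expansions have at most $N$ rows, which is needed for the Schur orthogonality $\int_{\mathcal U(N)}s_\lambda\overline{s_\rho}=\delta_{\lambda\rho}$ to hold. For smaller $N$ some $s_\lambda$ with $\ell(\lambda)>N$ vanish identically on $\mathcal U(N)$, the Schur functions cease to be independent there, the orthogonality relations break down, and the moment identity genuinely fails; so one must track row lengths throughout. The determinantal argument in part (b) is insensitive to this, which is why it holds for all $j,k$, and one checks that part (b) for $j=k\leq N$ is just the case $a=b=e_j$ of (a), consistent with $j\wedge N=j$.
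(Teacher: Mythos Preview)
Your proof is correct. Part (b) is a clean determinantal computation that matches the standard one, and part (a) via the Frobenius expansion and Schur orthogonality is exactly the argument that appears in the cited source Diaconis--Evans (who in turn attribute it to Diaconis--Shahshahani). Your remark about the role of the hypothesis $N\ge(\sum_j ja_j)\vee(\sum_j jb_j)$ in keeping $\ell(\lambda)\le N$ is on point.

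The comparison with the paper is trivial, however: the paper does not prove this statement at all. It is quoted verbatim as Theorem~2.1 of \cite{DE01} and used as a black box in the appendix to establish the CUE result. So you have in effect supplied the proof from the cited reference rather than reproduced anything the authors themselves wrote; there is nothing to compare beyond noting that your argument is the standard one.
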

With the above facts, we  now state our proof of Theorem \ref{thm.CUE}.
\begin{proof}[Proof of Theorem \ref{thm.CUE}]
First, based on Theorem \ref{thm.022501}, it is elementary to compute 
\begin{align}
\mathbb{E}\mathcal{A}_N^{\text{CUE}}= \frac{4}{N^2}\sum_{j=1}^N \frac{1}{j}+\frac{4}{N}\sum_{j=N+1}^\infty \frac{1}{j^2}= \frac{4}{N^2}(\log N+\gamma+1)+O\big(\frac{1}{N^3}\big).  \label{20070701}
\end{align}
Next, we need to identify the limiting distribution of $\mathcal{A}_N^{\text{CUE}}$ after centering. We write  
\begin{align} 
\mathcal{A}_N^{\text{CUE}}-\mathbb{E}\mathcal{A}_N^{\text{CUE}}= &\frac{4}{N^2}\sum_{j=1}^M \frac{1}{j^2}\big(\text{Tr}U^j \text{Tr}\overline{U}^j-j\big)+\frac{4}{N^2}\sum_{j=M+1}^N \frac{1}{j^2}\big(\text{Tr}U^j \text{Tr}\overline{U}^j-j\big)\nonumber\\
&+\frac{4}{N^2}\sum_{j=N+1}^\infty \frac{1}{j^2}\big(\text{Tr}U^j \text{Tr}\overline{U}^j-N\big),\label{19022501}
\end{align}
where we can choose $M$ to be sufficiently large. Our aim is to show that the last two terms in (\ref{19022501}) are negligible in probability when $M$ is large. It would be sufficient to show that their second moments are negligible.  To this end, we first recall the jpdf of the (unordered) eigenvalues $\{\mathrm{e}^{\ii \theta_j}\}_{j=1}^N$ of CUE (see e.g.\, \cite{Forrester})
\begin{align}
p_N(\mathrm{e}^{\ii \theta_1}, \ldots, \mathrm{e}^{\ii \theta_N})= \frac{1}{N!(2\pi)^N} \prod_{1\leq i<j\leq N}\big|\mathrm{e}^{\ii \theta_i}-\mathrm{e}^{\ii \theta_j}\big|^2, \qquad \theta_i\in [0,2\pi], \quad i=1,\ldots, N.  \label{jpdf of CUE}
\end{align}
It is also well known that $\{\mathrm{e}^{\ii \theta_j}\}_{j=1}^N$ is a determinantal point process with kernel (see e.g., \cite{Forrester})
\begin{align}
K_N(\theta,\theta')= \frac{1}{2\pi}\mathrm{e}^{\ii\frac{(N-1)(\theta-\theta')}{2}}\frac{\sin \frac{N(\theta-\theta')}{2}}{\sin \frac{\theta-\theta'}{2}}=\frac{1}{2\pi}\sum_{k=0}^{N-1}\mathrm{e}^{\ii k(\theta-\theta')}.  \label{20070710}
\end{align}
More specifically, for any $n\in\llbracket 1, N \rrbracket$, the $n$-point correlation function of the point process $\{\mathrm{e}^{\ii \theta_j}\}_{j=1}^N$  can be  written as 
\begin{align*}
p_N^{(n)}(\mathrm{e}^{\ii \theta_1}, \ldots, \mathrm{e}^{\ii \theta_n})&:= \int_{[0,2\pi]^{N-n}} p_N(\mathrm{e}^{\ii \theta_1}, \ldots, \mathrm{e}^{\ii \theta_N}) {\rm d} \theta_{n+1}\cdots {\rm d}\theta_N\nonumber\\
&= \frac{(N-n)!}{N!} \det (K_N(\theta_i,\theta_j))_{1\leq i,j\leq n}, \qquad \theta_i\in [0,2\pi], \quad i=1,\ldots, n. 
\end{align*}
Apparently, in order to compute $\text{Cov}\Big(\text{Tr} U^j\text{Tr} \overline{U}^j,  \text{Tr} U^k \text{Tr} \overline{U}^k\Big)$, it suffices to use the formula of $p_N^{(n)}$ with $n\leq 4$. Applying the explicit formula (\ref{20070710}), it is  elementary 
to check 
\begin{align}
&\text{Cov}\Big(\text{Tr} U^j\text{Tr} \overline{U}^j,  \text{Tr} U^k \text{Tr} \overline{U}^k\Big)\nonumber\\
&=
\left\{\begin{array}{llll}
j^2\delta_{jk},  & \text{if} \quad 1\leq j+k\leq N \\
j^2\delta_{jk}+N-j-k,  & \text{if} \quad j+k\geq N+1,\quad j,k\leq N\\
N^2\delta_{jk}-(N-|k-j|)\vee 0, & \text{if} \; j \;\text{or} \; k\geq N.
\end{array}
\right. \label{19100605}
\end{align} 

Then,  using Theorem \ref{thm.022501}, we can first conclude that $\{\frac{1}{\sqrt{j}}\text{Tr} U^j\}_{j=1}^M$ converges jointly to i.i.d. $N_{\mathbb{C}}(0,1)$ variables  for any large fixed $M$. Further, using (\ref{19100605}) to the second and the third parts  in (\ref{19022501}), one can easily check that these two part are negligible (in $M$) in probability when $M$ is large. Similarly to the discussions in \eqref{3.13} -- \eqref{3.15}, we can conclude that  
\begin{align}
\frac{N^2}{4}\Big(\mathcal{A}_N^{\text{CUE}}-\mathbb{E}\mathcal{A}_N^{\text{CUE}} \Big)\Longrightarrow \sum_{j=1}^\infty \frac{1}{j}\big(|Y_j|^2-1\big), \label{20070301}
\end{align}
where $Y_i$'s are i.i.d. $N_{\mathbb{C}}(0,1)$.   Combining (\ref{20070701}) with (\ref{20070301}) we can complete the proof. 
\end{proof}

\end{document}